\newtheorem{prop}{Proposition}
\newtheorem{rem}{Remark}
\newtheorem{exa}{Example}
\newtheorem{coro}{Corollary}
\newtheorem{thm}{Theorem}
\newtheorem{lemma}{Lemma}
\newcommand{\R}{\mathbb{R}}
\newcommand{\E}{\mathbb{E}}
\newcommand{\s}{\scalebox{1.3}{${}^\sharp$ \hspace{-0.15 cm}}}
\title[On the absolute continuity of random nodal volumes]{On the absolute continuity \\of random nodal volumes}
\begin{document}

\author{J\"urgen Angst}
\author{Guillaume Poly}
\address{Univ Rennes, CNRS, IRMAR - UMR 6625, F-35000 Rennes, France}
\email{jurgen.angst@univ-rennes1.fr, guillaume.poly@univ-rennes1.fr}
  

\begin{abstract}
We study the absolute continuity with respect to the Lebesgue measure of the distribution of the nodal volume associated with a smooth, non-degenerated and stationary Gaussian field $(f(x), {x \in \mathbb R^d})$. Under mild conditions, we prove that in dimension $d\geq 3$, the distribution of the nodal volume has an absolutely continuous component plus a possible singular part. This singular part is actually unavoidable baring in mind that some Gaussian processes have a positive probability to keep a constant sign on some compact domain. Our strategy mainly consists in proving closed Kac--Rice type formulas allowing one to express the volume of the set $\{f =0\}$ as integrals of explicit functionals of $(f,\nabla f,\text{Hess}(f))$ and next to deduce that the random nodal volume belongs to the domain of a suitable Malliavin gradient. The celebrated Bouleau--Hirsch criterion then gives conditions ensuring the absolute continuity.
\end{abstract}

\maketitle

\par
\vspace{1cm}
\tableofcontents

\newpage
\section{Introduction}

Nodal sets, i.e. vanishing loci of functions, are central objects in mathematics. They are for example at the very definition of algebraic varieties and thus the main object of algebraic geometry, but they also appear naturally in analysis, differential geometry and mathematical physics
Understanding the main features of a purely deterministic nodal set is generally out of reach, as illustrated by several celebrated open problems, such as Hilbert’s sixteenth problem or else Yau’s conjecture. In order to capture the typical behavior of an object, one is thus tempted to randomize, which reduces here to consider nodal sets associated with random functions. Computing expected values, variances or else fluctuations around the mean of the considered nodal functionals, and in particular understanding their asymptotic behavior as the amount of noise goes to infinity, is then a true wealth of information about the possible deterministic behaviors. Besides, randomization of nodal sets is also strongly motivated by deep physical insights, such as the celebrated Berry’s conjecture as explained in \cite{Ber1977}.
\par
\bigskip
Given a smooth function $f$ from $\R^d$ to $\mathbb R$, the associated nodal set $\{ f=0\}$ is generically a smooth submanifold of dimension $d-1$. One of the simplest nodal observable is then the nodal volume $\mathcal H^{d-1}(\{f=0\} \cap K)$, that is, the $d-1$ Hausdorff measure of the zero set in some compact domain $K \subset \mathbb R^d$. Undoubtedly, the most important tool in studying such functionals is the so-called Kac--Rice formula. Let us recall that, if the function $f$ is non-degenerated in the sense that 
\begin{equation}\label{eq.non.deg}
\min_{x \in K} \eta_f(x)>0, \quad \mathrm{where} \quad \eta_f(x):=\sqrt{f^2(x)+\|\nabla f(x)\|_2^2},
\end{equation}
i.e. the zeros of $f$ are not critical points, the Kac--Rice formula  reads
\begin{equation}\label{Kac--Rice-Intro}
\mathcal{H}^{d-1}\left( \{f=0\}\cap K\right)=\lim_{\epsilon\to 0}\frac{1}{2\epsilon}\int_{K} \textbf{1}_{\{|f(x)|<\epsilon\}}||\nabla f(x)||_2 \mathcal H_d(dx).
\end{equation}

When $f$ is a smooth random Gaussian field, one can then naturally take the expectation in Equation \eqref{Kac--Rice-Intro} and get rather easily the exact value of $\mathbb{E}\left[\mathcal{H}^{d-1}\left(\{f=0\}\cap K\right)\right]$. It is also possible to compute higher moments with similar formulas, which in that case can become more intricate. We refer the reader to the book \cite{azais2009level} for a rather exhaustive exposition of Kac--Rice type methods and applications.
\par
\bigskip
In the framework of random Gaussian fields, another salient tool is the so-called Wiener-chaotic expansion of the random variable $\mathcal{H}^{d-1}\left(\{f=0\}\cap K\right)$. This tool has shown recently its great efficiency in order to establish limit Theorems, both central and non central, regarding the random nodal volume. The literature on this topic is blowing and the reader can consult the following non-exhaustive list of related works \cite{NV1992,KL1997,AL2013,ADL2015,MPRW2016,AADL2017,cammarota2018quantitative,nourdin2017nodal} as well as the nice survey \cite{rossi2018random} for an overview. For each of the aforementioned models, it is indeed possible to compute explicitly the expansion into Wiener chaoses, which can be seen formally as the infinite dimensional analogue of the Hermite polynomials. The exact computation of the chaotic expansions enables one to use the so-called \textit{Nualart--Peccati criterion} \cite{nualart2005central} as well as \textit{Peccati--Tudor Theorem} \cite{peccati2005gaussian} which both provide efficient criteria ensuring central convergence.
\par
\newpage
In this article we provide a new tool enabling to study the probabilistic properties of the nodal volume associated with a smooth stationary Gaussian field. Namely, we first exhibit some deterministic closed Kac--Rice type formulas for the nodal volume of non-degenerated functions. Then, we deduce that, under mild conditions, the random nodal volume associated with a smooth stationary Gaussian field belongs to the domain of a Malliavin derivative operator. The Malliavin derivative has shown to be an efficient tool for proving the existence of densities for the law of random variables which belong to the domain of the Malliavin gradient. This theory is mainly due to P. Malliavin which used it to give an alternative proof of the hypoellipticiy criterion of H\"{o}rmander. Aside from this emblematic consequence of Malliavin theory, the reader is referred to \cite{nualart2006malliavin}  or \cite{nourdin2010stein} for many other applications of Malliavin calculus. In this article, we will rely on the so-called Bouleau-Hirsch criterion of existence of densities, see e.g. \cite[page 42]{bouleau2003error}. This criterion is more general compared to the Malliavin criterion since it requires less non-degeneracy, however it does not give any information on the regularity of the density.

\par
\medskip
Let us describe in more details our strategy and the organization of the paper. The first step, which is the object of the whole Section 2 below, consists in rewriting the deterministic formula \eqref{Kac--Rice-Intro} as an explicit closed formula taking the form of an integral of a simple functional of both the function $f$ and its derivatives. To the best of our knowledge, this above mentionned closed formula and its variants seem to be new and are of independent interest. Sections 2.1 to 2.3 are devoted the one-dimensional framework, whereas Sections 2.4 and 2.5 deal with the higher dimensional setting. 
\if{
\textcolor{blue}{For example, in Corollary \ref{crazy-formula} of Section 2.2 below, we will establish that if $f$ is a $C^2$ periodic function from $\mathbb T:=\mathbb R/\mathbb Z$ to $\mathbb R$, which is non-degenerated in the sense \eqref{eq.non.deg}, its total number of zeros is given by the exact formula
\[
 \mathcal H^{0}\left(\{f=0\}\right) =\displaystyle{\frac{1}{\pi} \int_0^{2\pi} \frac{f'(x)^2-f(x) f''(x)}{f^2(x)+f'(x)^2} }dx.
 \]
In the same manner, if $f : \mathbb R \to \mathbb R$ is non-degenerated, and if $a<b$ are such that we have $f(a)f(b)\neq 0$, we will see in Section 2.3 that
\[
\mathcal H^0\left(\{f=0\} \cap [a,b]  \right) =\frac{1}{\pi} \left[ \arctan \frac{f'(b)}{f(b)}-\arctan \frac{f'(a)}{f(a)} +   \int_a^b \frac{f'^2(x)-f(x)f''(x)}{f^2(x)+f'^2(x)}dx\right].
\]
The object of the next Sections 2.4 and 2.5 is then to establish several similar formulas in a higher dimensional setting. }}
\fi
To give to the reader a foretaste of the closed formulas we have in mind, if $f$ is a smooth, non-degenerated, periodic function from $\mathbb T^d=\mathbb R^d/\mathbb Z^d$ to $\mathbb R$, we will prove for example in Proposition \ref{prop.IPP2} below that
\begin{equation*}
\begin{array}{ll}
\mathcal H^{d-1}\left(\{f=0\}\right) & =\displaystyle{ - \frac{1}{2} \int_{\mathbb T^d}  \left( f(x) \Delta f(x)-  ||\nabla f(x)||^2 \right) \frac{|f(x)|}{\eta_f(x)^3} dx}\\
\\
&  \displaystyle{+\int_{\mathbb{T}^d} |f(x)| \left(\|\text{Hess}_x (f)\|^2-\text{Tr}\left(\text{Hess}_x(f)\right)^2\right)\frac{dx}{\eta_f(x)^3}}\\
\\
& \displaystyle{+\frac{3}{2} \int_{\mathbb{T}^d}\frac{|f(x)|}{\eta_f^5(x)} \left(\Delta f(x) \langle \nabla f(x), \nabla \eta_f^2(x)\rangle -\nabla f(x)^* \mathrm{Hess}_x f \nabla \eta_f^2(x)\right) dx.}
\end{array}
\end{equation*}
Compared to the classical Kac--Rice formula \eqref{Kac--Rice-Intro}, the suppression of the limit in this last formula, and the fact that each of the three integrands is a Lipschitz functional of the vector $(f, \nabla f, \mathrm{Hess}(f))$, will allow us to deploy the rich properties of stability of the domain of the Malliavin gradients. One can also use this formula to provide moment estimates for the nodal volume. Relying on the proof of lemma \ref{Integrand is in the domain}, we may infer that $\mathcal H^{d-1}\left(\{f=0\}\right)$ has a moment of any order less than $\frac{d+1}{2}$ provided that the Gaussian process is of class $\mathcal{C}^2(\mathbb{T}^d,\mathbb{R})$ and non-degenerated. Interestingly, this observation complements the Theorem 5 from the recent preprint \cite{AAGL18} which basically asserts that the nodal volume of a suitable $\mathcal{C}^\infty$ Gaussian field has moments of every order. Our formula establishes that, under the assumption of two derivatives, the integrability of the nodal volume increases as the dimension increases, see the remark \ref{moment}.
\par
\medskip
In Section 3, we give a first immediate application of these closed Kac--Rice type formulas associated with the celebrated Birkhoff ergodic theorem, namely we establish a strong law of large number for the number of zeros of a smooth, one-dimensional, stationary Gaussian process on a large interval. To the best of our knowlegde, this strong law of large numbers does not appear in the litterature. However, variance estimates, see e.g. \cite{wigman2011} naturally entail the convergence in probability of the properly renormalized number of zeros.

\par
\medskip
Finally, the last Section 4 is devoted to the precise study of the absolute continuity of the random nodal volume. In Section 4.1, we give a self-contained introduction to Malliavin calculus and Malliavin derivatives and in Section 4.2, we state and prove our main results concerning absolute continuity. 
We establish that the nodal volume of a suitable Gaussian field is in the domain of the Malliavin derivative (see conclusion (i) below) and that its distribution is not singular with respect to the Lebesgue measure. For more precise explanations about the domain $\mathbb{D}^{1,p}$ of the Malliavin derivative which appears in the statement (i) below,  we refer the reader to the subsection \ref{Malliavin-introduction}.

\begin{thm}\label{Main theorem-periodic}
Let $(f(x))_{x\in\mathbb{T}^d}$ be a periodic and stationary Gaussian field which is of class $C^2(\mathbb{T}^d,\R)$. We assume that the $(d+1)-$dimensional Gaussian vector $\left(f(x),\nabla f(x)\right)$ has a density.
Then, we have the two conclusions
\begin{itemize}
\item[(i)] $\mathcal{H}^{d-1}\left(\{f=0\}\right) \in\mathbb{D}^{1,\frac{d+1}{3}-}$,
\item[(ii)] provided that $\mathcal{H}^{d-1}\left(\{f=0\}\right)$ is not constant, its distribution has a non zero component which is absolutely continuous with respect to the Lebesgue measure.
\end{itemize}
\end{thm}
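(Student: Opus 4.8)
The plan is to feed the closed formula of Proposition~\ref{prop.IPP2} into the stability machinery of the Malliavin domain. Writing
\[
F := \mathcal H^{d-1}(\{f=0\}) = \sum_{i=1}^{3} \int_{\mathbb T^d} \Phi_i\bigl(f(x),\nabla f(x),\mathrm{Hess}_x f\bigr)\,dx,
\]
that proposition exhibits $F$ as a finite sum whose integrands $\Phi_i$ are rational in their arguments, smooth away from the singular locus $\{\eta_f=0\}$ and blowing up there of order at most $\eta_f^{-2}$ (the worst term being the one carrying $|f|(\|\mathrm{Hess}\|^2-\mathrm{Tr}(\mathrm{Hess})^2)/\eta_f^3$, whose numerator is $O(|f|)=O(\eta_f)$). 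The first step, which is the content of Lemma~\ref{Integrand is in the domain}, is to prove that for each fixed $x$ the random variable $\Phi_i(f(x),\nabla f(x),\mathrm{Hess}_x f)$ belongs to $\mathbb D^{1,p}$ for every $p<\frac{d+1}{3}$.

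I would carry this out by regularization. Replacing $\eta_f^2$ by $\eta_f^2+\varepsilon^2$ turns $\Phi_i$ into a globally $C^1$ function $\Phi_i^{\varepsilon}$ with polynomially bounded derivatives, so the Malliavin chain rule directly gives $\Phi_i^{\varepsilon}(f,\nabla f,\mathrm{Hess}\,f)\in\mathbb D^{1,p}$. The crux is then the control of negative moments: since by hypothesis the Gaussian vector $(f(x),\nabla f(x))\in\mathbb R^{d+1}$ has a density, non-degeneracy makes that density bounded, and integrating in polar coordinates about the origin yields $\mathbb E[\eta_f(x)^{-\alpha}]<\infty$ exactly for $\alpha<d+1$. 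Because $\Phi_i$ has a singularity of order $\eta_f^{-2}$ while its Malliavin derivative $D\Phi_i$ gains one further power and is of order $\eta_f^{-3}$, the requirement $\mathbb E[\|D\Phi_i\|_{\mathcal H}^p]<\infty$ reduces to $\mathbb E[\eta_f^{-3p}]<\infty$, that is to $3p<d+1$, which is precisely the exponent $\frac{d+1}{3}$ (the condition $\mathbb E[|\Phi_i|^p]<\infty$, equivalent to $2p<d+1$, being weaker and hence non-binding). Dominated convergence, justified by these bounds, lets $\varepsilon\to 0$ in $\mathbb D^{1,p}$, and the closedness of $D$ promotes the limit to $\Phi_i(f,\nabla f,\mathrm{Hess}\,f)\in\mathbb D^{1,p}$.

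Next I would upgrade membership pointwise in $x$ to membership of the integral, using that $\mathbb D^{1,p}$ is a Banach space and $D$ is a closed operator. It suffices that $x\mapsto \Phi_i(f(x),\nabla f(x),\mathrm{Hess}_x f)$ be Bochner integrable as a $\mathbb D^{1,p}$-valued map, i.e. that $\int_{\mathbb T^d}\|\Phi_i(x)\|_{\mathbb D^{1,p}}\,dx<\infty$. By stationarity the law of $(f(x),\nabla f(x),\mathrm{Hess}_x f)$ is independent of $x$, so this norm is constant in $x$ and the integral is finite by the previous step together with the compactness of $\mathbb T^d$. Closedness of $D$ then gives $F\in\mathbb D^{1,p}$ for every $p<\frac{d+1}{3}$, with $DF=\sum_i\int_{\mathbb T^d}D\Phi_i(x)\,dx$, which is conclusion (i).

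For conclusion (ii) the dimension enters decisively: as soon as $\frac{d+1}{3}>1$, that is $d\geq 3$, part (i) places $F$ in $\mathbb D^{1,p}$ for some $p>1$, hence in the domain $\mathbb D^{1,1}$ to which the Bouleau--Hirsch criterion applies. That criterion guarantees that the image under $F$ of the measure $\mathbf 1_{\{\|DF\|_{\mathcal H}>0\}}\,d\mathbb P$ is absolutely continuous with respect to Lebesgue measure, and this is the sought component. It remains to see it is non-zero, i.e. $\mathbb P(\|DF\|_{\mathcal H}>0)>0$: here the hypothesis that $F$ is not constant is used, since the kernel of $D$ consists exactly of the a.s. constant random variables, so $\|DF\|_{\mathcal H}=0$ almost surely would force $F\equiv\mathbb E[F]$, and the contrapositive concludes. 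The \textbf{main obstacle} is the negative-moment estimate of the second paragraph: although $\{\eta_f(x)=0\}$ is null for each fixed $x$, one must verify that composing with the singular functional and then applying $D$ does not degrade the singularity beyond order $\eta_f^{-3}$, and that the bound $\mathbb E[\eta_f^{-3p}]<\infty$ for $3p<d+1$ genuinely survives the interchange of $D$ with the $x$-integration; this is exactly where the non-degeneracy of $(f,\nabla f)$ and the sharp counting of powers of $\eta_f$ are indispensable.
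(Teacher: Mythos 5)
Your proposal follows essentially the same route as the paper: the closed formula of Proposition \ref{prop.IPP2}, the negative-moment estimate $\mathbb{E}[\eta_f^{-\alpha}]<\infty$ for $\alpha<d+1$ yielding the threshold $p<\frac{d+1}{3}$ after one derivative (Lemmas \ref{CR-technical} and \ref{Integrand is in the domain}), passage from pointwise to integral membership via completeness of $\mathbb{D}^{1,p}$ (the paper uses Riemann sums where you use Bochner integrability, an immaterial difference), and Bouleau--Hirsch plus ``zero sharp derivative implies constant'' for conclusion (ii). The one point you gloss over is that the factor $|f(x)|$ in each integrand is Lipschitz but not $C^1$, so regularizing only the denominator does not make $\Phi_i^{\varepsilon}$ globally $C^1$; this is precisely what the paper's Proposition \ref{Chain-rule-Lipschitz} (the Lipschitz chain rule, valid for $p>1$) is there to repair.
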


The previous statement also holds true without periodicity conditions on the Gaussian field $f$, as illustrated by our next main result. Let us stress here that we have separated the periodic and non-periodic frameworks because the absence of boundary terms in the integrations by parts used in the proof of Theorem \ref{Main theorem-periodic} makes it more transparent in the periodic setting. However, the strategy is essentially the same in both cases.

\begin{thm}\label{Mainpasperio}
Let $(f(x))_{x\in\mathbb{R}^d}$ be a stationary Gaussian field which is of class $C^2(\mathbb{R}^d,\R)$. Let $a_1<b_1, a_2<b_2,\cdots, a_d<b_d$ be some real numbers and set $A=\prod_{i=1}^d [a_i,b_i]$. We assume that the $(d+1)-$dimensional Gaussian vector $\left(f(x),\nabla f(x)\right)$ has a density.
Then we have the two conclusions
\begin{itemize}
\item[(i)] $\mathcal{H}^{d-1}\left(\{f=0\right\}\cap A) \in\mathbb{D}^{1,\frac{d+1}{3}-}$,
\item[(ii)] provided that $\mathcal{H}^{d-1}\left(\{f=0\}\cap A\right)$ is not constant, its distribution has a non zero component which is absolutely continuous with respect to the Lebesgue measure.
\end{itemize}
\end{thm}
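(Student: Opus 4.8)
The plan is to mirror the proof of Theorem \ref{Main theorem-periodic} given in the periodic setting, the only genuine difference being the appearance of boundary contributions in the integrations by parts that produce the closed formula. First I would establish the non-periodic analogue of the closed Kac--Rice formula of Proposition \ref{prop.IPP2}, expressing $\mathcal{H}^{d-1}(\{f=0\}\cap A)$ as a finite sum of a bulk integral over $A$ of the same Lipschitz integrands in $(f,\nabla f,\mathrm{Hess}(f))$ divided by powers of $\eta_f$, plus boundary integrals over the $(d-1)$-dimensional faces of $A$ coming from the now non-vanishing boundary terms. Since by hypothesis $(f(x),\nabla f(x))$ is a nondegenerate Gaussian vector, its density is bounded and a Bulinskaya-type argument guarantees that almost surely $f$ has no degenerate zero on the compact $A$, i.e. the non-degeneracy \eqref{eq.non.deg} holds $\mathbb{P}$-a.s., so that this closed formula is valid almost surely.

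For conclusion (i), I would argue exactly as in the proof of lemma \ref{Integrand is in the domain}. Each integrand, both in the bulk and on the faces, is a smooth function of $(f,\nabla f,\mathrm{Hess}(f))$ away from $\{\eta_f=0\}$, with negative powers of $\eta_f$ (at worst $\eta_f^{-3}$ and $\eta_f^{-5}$) controlling its size and its Lipschitz dependence. The bounded Gaussian density of $(f(x),\nabla f(x))$ yields the small-ball estimate $\mathbb{P}(\eta_f(x)<\epsilon)\lesssim \epsilon^{d+1}$, whence the negative moments $\mathbb{E}[\eta_f(x)^{-\alpha}]<\infty$ for $\alpha<d+1$; taking into account the $|f|\le \eta_f$ factors and the Malliavin chain rule (which raises the worst effective power in the derivative to $\eta_f^{-3}$), this gives membership of each pointwise integrand in $\mathbb{D}^{1,p}$ for every $p<\frac{d+1}{3}$. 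I would then transfer this to the integrals themselves by invoking the closability of the Malliavin gradient together with a dominated-convergence argument justifying the commutation of $D$ with the integrals $\int_A$ and $\int_{\partial A}$, obtaining $\mathcal{H}^{d-1}(\{f=0\}\cap A)\in\mathbb{D}^{1,\frac{d+1}{3}-}$.

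For conclusion (ii), set $F:=\mathcal{H}^{d-1}(\{f=0\}\cap A)$. By (i) we have $F\in\mathbb{D}^{1,p}\subset\mathbb{D}^{1,1}$, so the Bouleau--Hirsch criterion applies: the restriction of the law of $F$ to the event $\{\|DF\|_{\mathcal{H}}>0\}$ is absolutely continuous with respect to the Lebesgue measure. It therefore suffices to show $\mathbb{P}(\|DF\|_{\mathcal{H}}>0)>0$, and this is exactly where the non-constancy hypothesis enters: by the standard fact that a variable in the domain of the gradient whose Malliavin derivative vanishes almost surely is almost surely constant, the assumption that $F$ is not constant forces $\|DF\|_{\mathcal{H}}>0$ on a set of positive probability, on which the law of $F$ is absolutely continuous. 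This furnishes the announced non-zero absolutely continuous component.

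The main obstacle I anticipate lies entirely in the boundary analysis peculiar to the non-periodic case. One must carry out the integration by parts face by face over $\partial A$, keeping track of edges and corners, and verify that the resulting $(d-1)$-dimensional boundary integrands retain both the required integrability and the Malliavin regularity; because these live on lower-dimensional strata the small-ball estimates must be revisited to confirm that the exponent $\frac{d+1}{3}-$ is not degraded. The second delicate point is the rigorous justification of differentiating under the integral sign, i.e. commuting $D$ with $\int_A$ and $\int_{\partial A}$, which I would handle through a Riemann-sum approximation combined with the closedness of $D$ and the uniform integrability supplied by the negative-moment bounds above.
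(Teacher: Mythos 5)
Your overall architecture is the right one and matches the paper's: reuse the periodic proof, keep the boundary terms from the integrations by parts over $A$, check integrability of the bulk integrands exactly as in Lemma \ref{Integrand is in the domain}, and conclude (ii) from the Bouleau--Hirsch criterion plus the fact that a variable with vanishing sharp derivative is constant. That last part of your argument is correct and is precisely what the paper does.

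However, there is a genuine gap in your treatment of the boundary terms, and it is not the one you flag. You assert that the face integrands are, like the bulk ones, Lipschitz functionals of $(f,\nabla f,\mathrm{Hess}(f))$ controlled by negative powers of $\eta_f$, and you reduce the ``delicate point'' to revisiting small-ball estimates on lower-dimensional strata. But the boundary term produced by the \emph{first} integration by parts is
\[
W_i=\left[\int_{P^{(i)}}\mathrm{sign}(f(x))\,\frac{\partial_i f(x)}{\eta_f^2(x)}\,dx^{(i)}\right]_{a_i}^{b_i},
\]
and the factor $\mathrm{sign}(f)$ is not a Lipschitz functional of $f$; no amount of integrability of $\eta_f^{-\alpha}$ lets you apply the chain rule of Proposition \ref{Chain-rule-Lipschitz} to it. The paper resolves this by performing a \emph{further, tangential} integration by parts within each face: writing $g_i$ for the restriction of $f$ to the face, one multiplies and divides by $\eta_{g_i}^2=g_i^2+\|\nabla g_i\|^2$, uses $\mathrm{sign}(g_i)\,\eta_{g_i}^2=|g_i|\,g_i+\sum_{j\neq i}\partial_j|g_i|\,\partial_j g_i$, and integrates the $\partial_j|g_i|$ terms by parts along the face so that only the Lipschitz quantity $|g_i|$ survives. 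This requires two additional ingredients absent from your proposal: (1) the non-degeneracy of the restricted Gaussian vector $(g_i,\nabla g_i)$ on each face, which the paper deduces from that of $(f,\nabla f)$ via Sylvester's criterion on the covariance matrix; and (2) a separate negative-moment computation for $\eta_{g_i}$, which is the norm of a $d$-dimensional (not $(d+1)$-dimensional) Gaussian vector and yields the threshold $p<\tfrac d2$ --- it is only because $\tfrac{d+1}{3}\le\tfrac d2$ for $d\ge 3$ that the exponent is not degraded. Without these steps the face terms cannot be placed in $\mathbb{D}^{1,p}$ and conclusion (i) does not follow.
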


\begin{rem}
Throughout the whole article, unless otherwise stated, we will assume that the considered Gaussian fields $f$, which in our case will be indexed by $\R^d$ or $\mathbb T^d$, are almost surely non-degenerated in the sense of \eqref{eq.non.deg}, i.e. almost surely no zeros of $f$ are critical points. When dealing with a stationary Gaussian field, the non degeneracy holds true if the $(d+1)-$dimensional Gaussian vector $(f(x),\nabla f (x))$ has a density with respect to Lebesgue measure, hence the assumption in the last two theorems. We refer the reader to Proposition 6.12 of \cite{azais2009level} for a proof and several related criteria. 
\end{rem}

\newpage

\begin{rem}\label{diracenzero}
Self-evidently, if a function $f$ has a constant sign on some domain $K$, its nodal set has volume zero. Hence, if $f$ is a Gaussian process which has positive probability to keep a constant sign on $K$, the distribution of the random variable $\mathcal{H}^{d-1}\left(\{ f=0\} \cap K\right)$ has an atom at zero and cannot be absolutely continuous with respect to the Lebesgue measure. In this sense, the conclusions (ii) of the two above Theorems are sharp. To illustrate this observation, let us consider $P_{\lambda}$ the random trigonometric polynomial of degree $3\times 3$ in $\mathbb T^3$ of the form 
\[
P_{\lambda}(x,y,z):=\lambda a_0 + \frac{1}{\sqrt{n}^3} \sum_{k,\ell,m=1}^n a_{k,\ell,m} \cos( kx)\cos(\ell y)\cos(m z), \quad (x,y,z) \in \mathbb T^3,
\]
where $a_0$ and the $a_{k,\ell,m}$ are independant standard Gaussian variables, and where $\lambda>0$ is a positive parameter. When $\lambda$ is zero, the random polynomial is spatially centered and necessarily vanishes. The nodal volume is then expected to be fully absolutely continuous which is consistent with the first picture below. When $\lambda$ gets higher, the probability for the Gaussian field to keep a constant sign increases and a Dirac mass appears. The latter is illustrated in red.
\begin{figure}[ht]
\begin{center}
\includegraphics[scale=0.6]{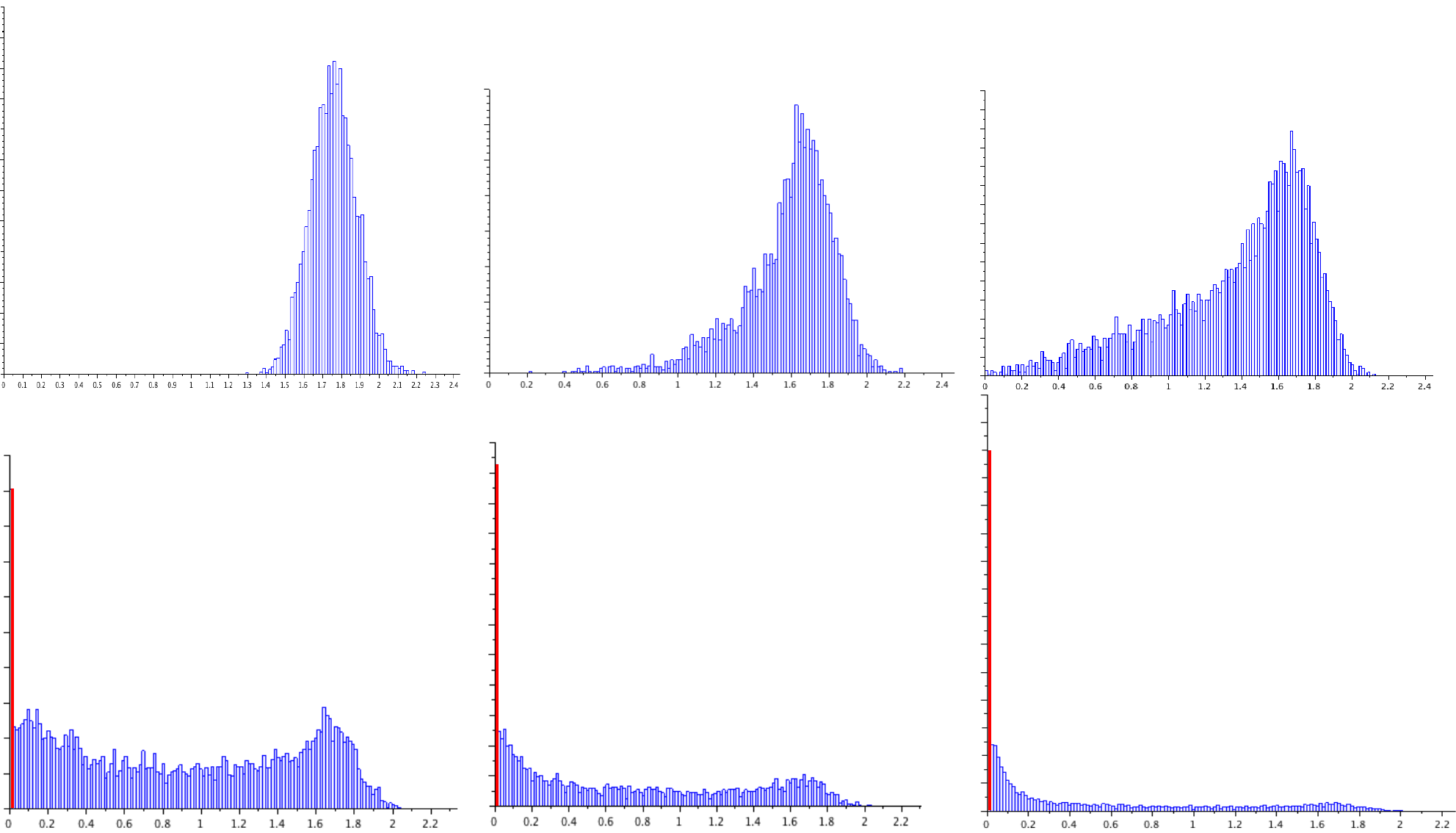}
\end{center}
\caption{Empirical histrograms (based on Monte-Carlo Method and the integral representation of Proposition \ref{formule-IPP1} below) of the nodal volume associated with the random trigonometric polynomial $P_{\lambda}$ for the different choices $\lambda = 0, 0.3, 0.5, 0.6,0.7,1$ (from left to right, top to bottom).}
\end{figure}
\end{rem}

\begin{rem}
Belonging to the domain of the Malliavin derivative is a true wealth of informations concerning the distribution. In addition to the celebrated Bouleau-Hirsch criterion which provides conditions ensuring the existence of densities, it is proved in \cite[prop 2.1.7, page 106]{nualart2006malliavin} that a random vector whose components belongs to the domain of the Mallavin derivative has a distribution whose topological support is connected. In the previous diagrams, the nodal volume appears to be supported on intervals of the form $[0,M]$ if $\lambda>0$ and of the form $[M_1,M_2]$ with $0<M_1< M_2$ when $\lambda=0$.
\end{rem}

\section{Closed Kac--Rice type formulas} \label{sec.Kac}
In this Section, starting from the classical Kac--Rice formula and using simple integrations by parts, we establish some exact closed formulas for the nodal volume associated with non-degenerated functions. 
\subsection{A closed formula in dimension one}
Let us first consider a periodic function $f\in\mathcal{C}^2(\mathbb{T},\R)$ which is supposed to be non-degenerated in the sense \eqref{eq.non.deg}.  In this one dimensional and periodic setting, the celebrated Kac--Rice formula \eqref{Kac--Rice-Intro} for the number of zeros of $f$ in $[0, 2\pi]$ simply reads
\begin{equation}\label{Kac--Rice}
 \mathcal H^{0}\left(\{f=0\}\right) =\lim_{\varepsilon \to 0} \int_0^{2\pi} \mathds{1}_{[-\varepsilon,\varepsilon]}(f(x)) |f'(x)| \frac{dx}{2\varepsilon}.
\end{equation}
The presence of the limit in $\varepsilon$ in the formula (\ref{Kac--Rice}) is an important drawback since one needs to track the speed of convergence in the Kac--Rice in order to get accurate estimates of the number of roots. We shall remove the limit and write the number of roots as a simple integral of an explicit functional of $(f,f',f'')$. The price to pay is to require two derivatives whereas the formula (\ref{Kac--Rice}) only needs one.
\begin{prop}\label{Kac--Rice-closed}If $f\in\mathcal{C}^2(\mathbb{T},\R)$ is non-degenerated, then we have
\[
\begin{array}{l}
 \mathcal H^{0}\left(\{f=0\}\right)  \displaystyle{= -\frac{1}{2}\int_0^{2\pi} \left( f''(x)f(x)-f'(x)^2 \right) \frac{|f(x)|}{\eta_f^3(x)}dx }.
\end{array}
\]
\end{prop}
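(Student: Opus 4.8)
The plan is to prove the identity by showing directly that the integral on the right-hand side equals the integer $N:=\mathcal H^0(\{f=0\})$, the key device being that the stated integrand is, away from the zero set, an exact derivative multiplied by $\mathrm{sgn}(f)$, so that a single integration by parts converts it into a sum of boundary contributions located precisely at the zeros of $f$.

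First I would record the consequences of non-degeneracy. Since $f\in\mathcal C^2(\mathbb T,\mathbb R)$ and $\eta_f=\sqrt{f^2+f'^2}>0$ everywhere on the compact circle, at each zero $x_0$ of $f$ one has $f'(x_0)\neq 0$; hence the zeros are simple, isolated, and therefore finite in number, say $x_1<\dots<x_N$. Because $\mathbb T$ is a circle, the sign of $f$ must alternate between consecutive zeros, which forces $N$ to be even; and at a zero $x_i$ the identity $\eta_f(x_i)=|f'(x_i)|$ gives $f'(x_i)/\eta_f(x_i)=\mathrm{sgn}(f'(x_i))$, a fact that will be used to evaluate the boundary terms. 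I may assume, after translating the fundamental domain, that neither endpoint of $[0,2\pi]$ is a zero.

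The heart of the matter is the algebraic identity, valid on $\{f\neq 0\}$,
\[
 -\tfrac12\bigl(f''f-f'^2\bigr)\frac{|f|}{\eta_f^3}=-\tfrac12\,\mathrm{sgn}(f)\,\frac{d}{dx}\!\left(\frac{f'}{\eta_f}\right),
\]
which follows from the elementary computation $\frac{d}{dx}(f'/\eta_f)=f\,(ff''-f'^2)/\eta_f^3$ together with $\mathrm{sgn}(f)\,f=|f|$. Granting this, I would integrate by parts over $\mathbb T$: since $f'/\eta_f$ is of class $\mathcal C^1$ and $\mathrm{sgn}(f)$ is of bounded variation with distributional derivative $\sum_{i}2\,\mathrm{sgn}(f'(x_i))\,\delta_{x_i}$ (each simple zero produces a jump of size $2\,\mathrm{sgn}(f'(x_i))$), the integration by parts carries no boundary term by periodicity and yields
\[
 -\tfrac12\int_0^{2\pi}\mathrm{sgn}(f)\Bigl(\tfrac{f'}{\eta_f}\Bigr)'dx=\tfrac12\int_0^{2\pi}\Bigl(\tfrac{f'}{\eta_f}\Bigr)\,d\bigl(\mathrm{sgn}(f)\bigr)=\sum_{i=1}^N \mathrm{sgn}(f'(x_i))\,\frac{f'(x_i)}{\eta_f(x_i)}.
\]
Each summand equals $\mathrm{sgn}(f'(x_i))^2=1$, so the total is $N=\mathcal H^0(\{f=0\})$, which is the claim. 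Readers uneasy with the distributional formulation can reach the same conclusion by splitting $[0,2\pi]$ at the $x_i$, applying the fundamental theorem of calculus on each subinterval where $\mathrm{sgn}(f)$ is constant, and telescoping; the alternation of signs makes every subinterval contribute exactly $+1$.

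I expect the main obstacle to be purely a matter of rigor rather than of idea: justifying the integration by parts when $\mathrm{sgn}(f)$ is merely piecewise constant, that is, correctly identifying the jump measure of $\mathrm{sgn}(f)$ and checking that pairing it with the continuous function $f'/\eta_f$ is legitimate. The interval-splitting argument circumvents this entirely and is the version I would ultimately write down. Finally, one should note consistency with the Kac--Rice formula \eqref{Kac--Rice}: the right-hand side just computed is limit-free, so the content of the proposition is precisely that the $\varepsilon$-regularisation can be removed at the cost of one additional derivative, exactly as announced.
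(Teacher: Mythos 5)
Your proof is correct, but it is not the route the paper takes for this proposition: the paper's own proof stays anchored to the Kac--Rice formula \eqref{Kac--Rice}, first replacing $|f'|$ by $f'^2/\eta_f$ at an $O(\varepsilon)$ cost, then integrating by parts against the Lipschitz cutoff $\phi_\varepsilon\circ f$ and passing to the limit by dominated convergence, so that $\mathrm{sign}(f)$ only appears as the pointwise limit of $\phi_\varepsilon\circ f$ and no analysis of the zero set is needed beyond non-degeneracy. You instead verify the closed formula directly: the identity $\bigl(f'/\eta_f\bigr)'=f(ff''-f'^2)/\eta_f^3$ is exact, your jump computation for $\mathrm{sign}(f)$ at a simple zero is right, and the telescoping over the subintervals $(x_i,x_{i+1})$ (with $f'(x_i^+)/\eta_f(x_i)=\mathrm{sgn}(f'(x_i))$ and the sign flip at the other endpoint) does yield $+1$ per zero; the case $N=0$ is handled by periodicity. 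What is worth noting is that your argument is essentially the paper's \emph{second} proof of this same identity: Proposition \ref{Kac--Rice-general} decomposes $-\tfrac12\int F'(f'/f)(f'/f)'\,dx$ over the intervals between consecutive zeros and telescopes, and Example \ref{argsh} with $F(x)=x/\sqrt{1+x^2}$ (for which $F(f'/f)=\mathrm{sgn}(f)\,f'/\eta_f$) recovers exactly Proposition \ref{Kac--Rice-closed}. So your approach buys independence from the Kac--Rice formula and a cleaner "counting" interpretation, at the price of needing the zeros to be simple and finite in number (which non-degeneracy supplies); the paper's first proof buys a derivation that literally exhibits the closed formula as the $\varepsilon\to0$ limit of \eqref{Kac--Rice}, which is the narrative the section is built around. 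No gap to report.
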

\begin{proof}
By hypothesis, since $f$ is non-degenerated, we have $\eta_f:=\inf_{x \in [0, 2\pi]} \eta_f(x)>0$, so that we can write
\[
\begin{array}{ll}
\displaystyle{\frac{|f'(x)|^2}{\sqrt{|f(x)|^2 + |f'(x)|^2}} - |f'(x)|} & =\displaystyle{ \sqrt{|f(x)|^2 + |f'(x)|^2}  - |f'(x)| - \frac{|f(x)|^2}{\sqrt{|f(x)|^2 + |f'(x)|^2}}}\\
\\
& = \displaystyle{\frac{|f(x)|^2}{\sqrt{|f(x)|^2 + |f'(x)|^2} + |f'(x)|}-\frac{|f(x)|^2}{\sqrt{|f(x)|^2 + |f'(x)|^2}}},
\end{array}
\]
and in particular, we get 
\[
\left| \frac{|f'(x)|^2}{\sqrt{|f(x)|^2 + |f'(x)|^2}} - |f'(x)| \right| \leq \frac{2 f(x)^2}{\eta_f}.
\]
For all $\varepsilon>0$, if we set
\[
I_{\varepsilon}  \displaystyle{:=\int_0^{2\pi} \mathds{1}_{[-\varepsilon,\varepsilon]}(f(x)) \frac{|f'(x)|^2}{\sqrt{|f(x)|^2 + |f'(x)|^2}} \frac{dx}{2\varepsilon} - \int_0^{2\pi} \mathds{1}_{[-\varepsilon,\varepsilon]}(f(x)) |f'(x)| \frac{dx}{2\varepsilon}},
\]
we have then
\[
\begin{array}{ll}
| I_{\varepsilon} | & \leq  \displaystyle{ \int_0^{2\pi} \mathds{1}_{[-\varepsilon,\varepsilon]}(f(x))  \frac{2 f(x)^2}{\eta_f} \frac{dx}{2\varepsilon} \leq \frac{2\pi  \varepsilon}{\eta_f}}.
\end{array}
\]
In particular, $\lim_{\varepsilon \to 0} I_{\varepsilon} =0$ so that the Kac--Rice formula \eqref{Kac--Rice} can be rewritten as 
\[
 \mathcal H^{0}\left(\{f=0\}\right)=\lim_{\varepsilon \to 0} N_{\varepsilon}, \quad \text{where} \;\; N_{\varepsilon}:=\int_0^{2\pi} \mathds{1}_{[-\varepsilon,\varepsilon]}(f(x)) \frac{|f'(x)|^2}{\sqrt{|f(x)|^2 + |f'(x)|^2}}  \frac{dx}{2\varepsilon}.
\]
For all $\varepsilon>0$, let us now consider the function $\phi_{\varepsilon}$ defined as
\[
\phi_{\varepsilon}(x) := \left \lbrace \begin{array}{cl} -1 & \text{if} \;\; x \leq -\varepsilon, \\
x/\varepsilon & \text{if} \;\; -\varepsilon \leq x \leq \varepsilon, \\
1 & \text{if} \;\;  x \geq \varepsilon. \end{array} \right.
\]
Since $f$ is periodic, integrating by parts, we get that
\[
\begin{array}{ll}
N_{\varepsilon}& = \displaystyle{ \frac{1}{2} \int_0^{2\pi} \left( \frac{1}{\varepsilon}\mathds{1}_{[-\varepsilon,\varepsilon]}(f(x)) f'(x)\right) \frac{f'(x)}{\sqrt{|f(x)|^2 + |f'(x)|^2}}  dx}\\
\\
&= \displaystyle{ \frac{1}{2} \int_0^{2\pi} \left( \phi_{\epsilon} \circ f(x) \right)' \frac{f'(x)}{\sqrt{|f(x)|^2 + |f'(x)|^2}}  dx}\\
\\
&= \displaystyle{ -\frac{1}{2} \int_0^{2\pi} \left( \phi_{\epsilon} \circ f(x) \right) \left(\frac{f'(x)}{\sqrt{|f(x)|^2 + |f'(x)|^2}} \right)' dx}\\
\\
&= \displaystyle{ -\frac{1}{2} \int_0^{2\pi} \left( \phi_{\epsilon} \circ f(x) \right) \frac{f(x) \left( f''(x)f(x) - f'(x)^2\right)}{\eta_f(x)^3}  dx}.
\end{array}
\]
Since $|\phi_{\varepsilon}|$ is bounded by one uniformly in $\varepsilon$, by the dominated convergence Theorem, we deduce that
\[
\begin{array}{ll}
 \mathcal H^{0}\left(\{f=0\}\right) & =\displaystyle{\lim_{\varepsilon \to 0} N_{\varepsilon} = -\frac{1}{2} \int_0^{2\pi}  \text{sign}( f(x))  \frac{f(x) \left( f''(x)f(x) - f'(x)^2\right)}{\eta_f(x)^3}  dx}  \\
\\
& =\displaystyle{ -\frac{1}{2} \int_0^{2\pi}     \left( f''(x)f(x) - f'(x)^2\right)\frac{|f(x)|}{\eta_f(x)^3}  dx.}  
\end{array}
\]
\end{proof}


\subsection{Understanding the formula and generalizations}
In this subsection, we shall observe that the previous procedure hides a simple phenomenon which allows us to derive a family of analogue formulas. Let $F$ be a $C^1$ function on $\mathbb R$ such that 
\[
\lim_{x \to -\infty} F(x) =-1, \quad \lim_{x \to +\infty} F(x) =1.
\]
The reader can keep in mind the examples 
\[
F(x) = \frac{x}{\sqrt{1+x^2}}, \quad F(x) =\frac{2}{\pi} \arctan(x), \quad \text{or} \quad F=2 G -1,
\]
where $G$ is the cumulative distribution function of any continuous random variable. 
 
\begin{prop}\label{Kac--Rice-general}If $f\in\mathcal{C}^2(\mathbb{T},\R)$  is non-degenerated, then we have
\begin{equation}\label{eq.genF}
\begin{array}{ll}
 \mathcal H^{0}\left(\{f=0\}\right) & \displaystyle{= -\frac{1}{2}\int_0^{2\pi} F'\left( \frac{f'(x)}{f(x)}\right) \left(\frac{f'(x)}{f(x)}\right)'  dx}.
\end{array}
\end{equation}
\end{prop}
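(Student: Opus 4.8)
The plan is to recognise the integrand as an exact derivative away from the zeros of $f$ and to read off the value of the integral purely from the jumps of $F\circ(f'/f)$ across those zeros, exactly in the spirit of Proposition \ref{Kac--Rice-closed}, of which this statement is a direct generalisation.

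First I would record that, by the quotient rule,
\[
\left(\frac{f'(x)}{f(x)}\right)' = \frac{f''(x)f(x)-f'(x)^2}{f(x)^2},
\]
so that on every open subinterval of $\mathbb{T}$ avoiding the zero set $\{f=0\}$ the integrand equals $\left(F\circ h\right)'(x)$, where I abbreviate $h:=f'/f$. Since $f$ is non-degenerated in the sense of \eqref{eq.non.deg}, every zero $x_0$ of $f$ satisfies $f'(x_0)\neq 0$, hence is simple and isolated; by compactness of $\mathbb{T}$ there are finitely many of them, say $x_1<\dots<x_N$ with $N=\mathcal{H}^0(\{f=0\})$, and they cut the circle into $N$ arcs $(x_i,x_{i+1})$, indices taken cyclically.

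The key local computation is the behaviour of $h$ near a zero. Writing $f(x)=f'(x_0)(x-x_0)+o(x-x_0)$ and $f'(x)=f'(x_0)+o(1)$ gives $h(x)\sim 1/(x-x_0)$, \emph{independently of the sign of $f'(x_0)$}. Consequently $h(x)\to+\infty$ as $x\to x_0^+$ and $h(x)\to-\infty$ as $x\to x_0^-$, so that on each arc $(x_i,x_{i+1})$ one has $F(h(x))\to F(+\infty)=1$ at the left endpoint and $F(h(x))\to F(-\infty)=-1$ at the right endpoint. Applying the fundamental theorem of calculus on $[x_i+\delta,x_{i+1}-\delta]$, where $F\circ h$ is genuinely $C^1$, and letting $\delta\to0$ yields
\[
\int_{x_i}^{x_{i+1}} F'(h(x))\,h'(x)\,dx = \lim_{\delta\to0}\Big(F(h(x_{i+1}-\delta))-F(h(x_i+\delta))\Big)=-1-1=-2.
\]
Summing over the $N$ arcs gives $\int_0^{2\pi}F'(h)h'\,dx=-2N$, and multiplying by $-\tfrac12$ is exactly the claimed formula (the case $N=0$ being trivial, since then $f$ keeps a constant sign, $F\circ h$ is globally $C^1$ and periodic, and the integral vanishes). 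As a sanity check, the choice $F(x)=x/\sqrt{1+x^2}$, for which $F'(x)=(1+x^2)^{-3/2}$, makes $F'(h)h'$ collapse to $|f|(f''f-f'^2)/\eta_f^3$, recovering Proposition \ref{Kac--Rice-closed}.

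The only delicate point, which I expect to be the main obstacle, is the passage to the limit $\delta\to0$: the integrand $F'(h)h'$ blows up like $F'(1/(x-x_0))\,(x-x_0)^{-2}$ near each zero, so I would present the integral over each arc as the convergent improper integral produced by the telescoping above, the crucial feature being that this limit equals $-2$ for \emph{every} admissible $F$ because it depends on $F$ only through its endpoint values $\pm1$. For the listed examples $F'\ge 0$ with $\int_{\mathbb{R}}F'=2$, so $F'(h)h'$ is in fact absolutely integrable and no improper interpretation is needed; in general one simply reads the right-hand side as the sum over arcs of these convergent integrals.
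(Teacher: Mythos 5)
Your proof is correct and follows essentially the same route as the paper: decompose the integral over the arcs between consecutive zeros, observe that $f'/f$ tends to $-\infty$ (resp.\ $+\infty$) at the left (resp.\ right) end of each zero, and read off a contribution of $-2$ per zero from the endpoint values $F(\pm\infty)=\pm1$. Your explicit local expansion $h(x)\sim 1/(x-x_0)$ and the remark on the improper-integral interpretation near the zeros merely make precise the boundary-value claims the paper states via its figure.
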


\begin{proof}
Let us first remark that since $f$ is non-degenerated, it has a finite number of zeros in $[0,2\pi]$. Moreover, if $f$ does not vanish, then the integrand in the right hand side of Equation \eqref{eq.genF} has no singularity and the integral is indeed equal to zero by periodicity. To simplify the expressions, set $N:= \mathcal H^{0}\left(\{f=0\}\right)$ and denote by $x_1\leq \ldots \leq x_N$ the zeros of $f$ in $[0, 2\pi]$, and set $x_{N+1}:=x_1$. We can then decompose the integral in Equation \eqref{eq.genF} as  the sum
\[
\begin{array}{l}
\displaystyle{ -\frac{1}{2}\int_0^{2\pi} F'\left( \frac{f'(x)}{f(x)}\right) \left(\frac{f'(x)}{f(x)}\right)'  dx=-\frac{1}{2} \sum_{i=1}^N \int_{x_i^+}^{x_{i+1}^-} F'\left( \frac{f'(x)}{f(x)}\right) \left(\frac{f'(x)}{f(x)}\right)'  dx} \\
\\
=  - \displaystyle{\frac{1}{2}\sum_{i=1}^N \underbrace{ F\left( \frac{f'}{f}\left(x_{i}^-\right)\right) -F\left( \frac{f'}{f}\left(x_{i}^+\right)\right)}_{=-2}=N. }
\end{array}
\]
Indeed, as illustrated Figure 2 below, each zero crossing contributes to a factor $-2$, since $F$ takes values $\pm 1$ at $\pm \infty$.
\end{proof}
\begin{figure}[ht]\label{fig.logd}
\vspace{-0.5cm}\begin{center}\scalebox{0.8}{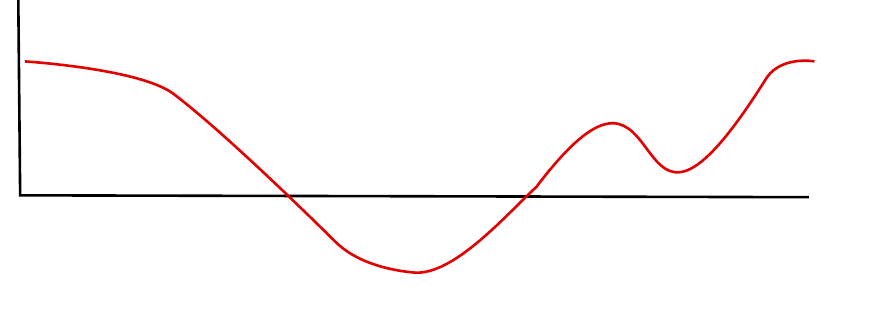}\end{center}
\caption{Values of the logarithmic derivative at the successive zero crossings.}
\end{figure}

\begin{exa}\label{argsh}
If we make the choice 
\[
F(x) = \frac{x}{\sqrt{1+x^2}}, \quad \text{then} \quad F'(x) = \frac{1}{\sqrt{1+x^2}^3},
\]
and the formula established in Proposition \ref{Kac--Rice-general} reads
\[
\begin{array}{ll}
 \mathcal H^{0}\left(\{f=0\}\right) & =\displaystyle{-\frac{1}{2}\int_0^{2\pi} F'\left( \frac{f'(x)}{f(x)}\right) \left(\frac{f'(x)}{f(x)}\right)'  dx }\\
 
&= \displaystyle{-\frac{1}{2}\int_0^{2\pi} \frac{|f(x)|}{\sqrt{f^2(x)+ f'(x)^2}^3} \times \left(f''(x)f(x)-f'(x)^2\right) dx},
\end{array}
\]
\if{\[
\begin{array}{ll}
 \mathcal H^{0}\left(\{f=0\}\right) & =\displaystyle{-\frac{1}{2}\int_0^{2\pi} F'\left( \frac{f'(x)}{f(x)}\right) \left(\frac{f'(x)}{f(x)}\right)'  dx }\\
\\
& =\displaystyle{-\frac{1}{2}\int_0^{2\pi} \frac{1}{\sqrt{1+\left( \frac{f'(x)}{f(x)}\right)^2}^3} \times \left(\frac{f'(x)}{f(x)}\right)'  dx}\\
\\
&= \displaystyle{-\frac{1}{2}\int_0^{2\pi} \frac{1}{\sqrt{1+\left( \frac{f'(x)}{f(x)}\right)^2}^3} \times \left(\frac{f''(x)f(x)-f'(x)^2}{f(x)^2}\right) dx}
\\
\\
&= \displaystyle{-\frac{1}{2}\int_0^{2\pi} \frac{|f(x)|}{\sqrt{f^2(x)+ f'(x)^2}^3} \times \left(f''(x)f(x)-f'(x)^2\right) dx},
\end{array}
\]}\fi
that is, we recover the formula of Proposition \ref{Kac--Rice-closed}. 
\end{exa}

\begin{exa}\label{arctan}
If we now choose 
\[
F(x) = \frac{2}{\pi}\arctan(x), \quad \text{then} \quad F'(x) = \frac{2}{\pi}\frac{1}{1+x^2},
\]
so that 
\[
\begin{array}{ll}
 \mathcal H^{0}\left(\{f=0\}\right) & =\displaystyle{-\frac{1}{2}\int_0^{2\pi} F'\left( \frac{f'(x)}{f(x)}\right) \left(\frac{f'(x)}{f(x)}\right)'  dx }\\
\\
&= \displaystyle{-\frac{1}{\pi}\int_0^{2\pi} \frac{1}{f(x)^2+ f'(x)^2} \times \left(f''(x)f(x)-f'(x)^2\right) dx}.
\end{array}
\]
\if{\[
\begin{array}{ll}
 \mathcal H^{0}\left(\{f=0\}\right) & =\displaystyle{-\frac{1}{2}\int_0^{2\pi} F'\left( \frac{f'(x)}{f(x)}\right) \left(\frac{f'(x)}{f(x)}\right)'  dx }\\
\\
& =\displaystyle{-\frac{1}{\pi}\int_0^{2\pi} \frac{1}{1+\left( \frac{f'(x)}{f(x)}\right)^2} \times \left(\frac{f'(x)}{f(x)}\right)'  dx}\\
\\
&= \displaystyle{-\frac{1}{\pi}\int_0^{2\pi} \frac{1}{1+\left( \frac{f'(x)}{f(x)}\right)^2}\times \left(\frac{f''(x)f(x)-f'(x)^2}{f(x)^2}\right) dx}
\\
\\
&= \displaystyle{-\frac{1}{\pi}\int_0^{2\pi} \frac{1}{f(x)^2+ f'(x)^2} \times \left(f''(x)f(x)-f'(x)^2\right) dx}.
\end{array}
\]}\fi
\end{exa}

\begin{exa}\label{indicator}
But one can take more degenerate examples for instance $F=2G-1$ where $G$ is the cumulative distribution of the uniform distirbution on $[-1,1]$. This leads to
\[
\begin{array}{ll}
 \mathcal H^{0}\left(\{f=0\}\right) & =\displaystyle{-\frac{1}{2}\int_0^{2\pi} F'\left( \frac{f'(x)}{f(x)}\right) \left(\frac{f'(x)}{f(x)}\right)'  dx }\\
\\
& =\displaystyle{-\frac{1}{2}\int_0^{2\pi} \textbf{1}_{\left\{\left|\frac{f'(x)}{f(x)}\right|\le 1\right\}}\times \left(\frac{f'(x)}{f(x)}\right)'  dx}\\
\\
& =\displaystyle{-\frac{1}{2}\int_0^{2\pi} \textbf{1}_{\left\{\left|\frac{f'(x)}{f(x)}\right|\le 1\right\}}\times\frac{f''(x)f(x)-f'(x)^2}{f(x)^2}  dx}.
\end{array}\]
\end{exa}

Going back to Example 2, we have therefore proved the following simple formula which has the advantage to express the number of roots as an integral of a simple rational function of $(f,f',f'')$. It might be of particular interest when dealing with analytic functions since the integrand in the equation (\ref{crazy-formula}) remains analytic even in degenerate settings, id-est when $\eta_f$ vanishes.

\begin{coro}\label{cor.crazy-formula}
If $f\in\mathcal{C}^2(\mathbb{T},\R)$  is non-degenerated, then we have
\begin{equation}\label{crazy-formula}
 \mathcal H^{0}\left(\{f=0\}\right) =\displaystyle{\frac{1}{\pi} \int_0^{2\pi} \frac{f'(x)^2-f(x) f''(x)}{f^2(x)+f'(x)^2} }dx.
\end{equation}
\end{coro}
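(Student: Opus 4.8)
The plan is to recognize this identity as a direct specialization of Proposition~\ref{Kac--Rice-general}. The function $F(x) = \frac{2}{\pi}\arctan(x)$ is $C^1$ on $\R$ and satisfies the required limits $\lim_{x\to-\infty}F(x) = -1$ and $\lim_{x\to+\infty}F(x) = +1$, so Proposition~\ref{Kac--Rice-general} applies verbatim to this choice and there is nothing further to set up.

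First I would compute $F'(x) = \frac{2}{\pi}\,\frac{1}{1+x^2}$ and substitute the argument $f'/f$, which gives $F'\!\left(f'/f\right) = \frac{2}{\pi}\,\frac{f^2}{f^2+f'^2}$. Combining this with the quotient-rule identity $\left(\frac{f'}{f}\right)' = \frac{f''f - f'^2}{f^2}$ makes the factor $f^2$ cancel, so that the integrand appearing in Proposition~\ref{Kac--Rice-general} reduces, up to the global prefactor $-\frac{1}{2}$, to $\frac{2}{\pi}\,\frac{f''f - f'^2}{f^2 + f'^2}$. Collecting the constants yields
\[
\mathcal H^{0}\left(\{f=0\}\right) = -\frac{1}{\pi}\int_0^{2\pi}\frac{f''(x)f(x) - f'(x)^2}{f^2(x)+f'^2(x)}\,dx,
\]
and a final rearrangement of the sign in the numerator gives exactly the claimed formula \eqref{crazy-formula}. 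This is precisely the computation already recorded in Example~\ref{arctan}.

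There is no genuine obstacle here, the statement being a corollary of Proposition~\ref{Kac--Rice-general}. The only point I would emphasize is that, although the integrand of Proposition~\ref{Kac--Rice-general} is expressed through the logarithmic-derivative quantity $f'/f$, which blows up at the zeros of $f$, the cancellation of $f^2$ described above shows that the resulting integrand $\frac{f'^2 - f''f}{f^2+f'^2}$ is in fact globally continuous on $\mathbb T$: non-degeneracy guarantees $f^2(x)+f'^2(x) = \eta_f^2(x) \geq \min_{x}\eta_f(x)^2 > 0$, so the denominator never vanishes and the apparent singularities at the roots of $f$ are removable. This regularity is exactly what makes the representation \eqref{crazy-formula} convenient in the degenerate or analytic settings alluded to just before the statement, since the integrand remains a bona fide rational function of $(f,f',f'')$ even where $\eta_f$ is small.
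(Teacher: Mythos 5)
Your proof is correct and is exactly the paper's own route: the corollary is obtained by specializing Proposition~\ref{Kac--Rice-general} to $F=\frac{2}{\pi}\arctan$, which is precisely the computation recorded in Example~\ref{arctan}. The remark about the cancellation of $f^2$ making the integrand globally continuous matches the observation the paper makes just before the statement.
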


On the other hand, such formulas can also be used in order to bound the number of real roots in term of various reformulations of the non degeneracy assumption that $f$ and $f'$ do not vanish simultaneously. One gets for instance the following estimates,
\begin{coro}\label{bound-number-roots}
Using the formula given in Example \ref{arctan} on gets
\[
 \mathcal H^{0}\left(\{f=0\}\right) \le \frac{1}{\pi} \int_0^{2\pi}\frac{|f''(x)|}{\eta_f(x)}dx+2 \leq 2\left( \frac{||f''||_{\infty}}{\eta_f} +1\right).
 \]
The formula given in Example \ref{indicator} gives instead
\[
 \mathcal H^{0}\left(\{f=0\}\right) \le \frac{1}{2}\int_0^{2\pi}\textbf{1}_{\left\{\left|\frac{f'}{f}\right|\le 1\right\}} \frac{|f''(x)|}{|f(x)|} dx+\pi.
\]
\end{coro}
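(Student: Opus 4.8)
The plan is to derive both estimates by bounding the integrands of the two closed formulas recorded in Examples \ref{arctan} and \ref{indicator} pointwise, using nothing more than the triangle inequality together with the elementary inequalities $|f(x)|\le \eta_f(x)$ and $f'(x)^2\le \eta_f(x)^2$, which follow immediately from the very definition $\eta_f(x)=\sqrt{f(x)^2+f'(x)^2}$.

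I would start from the $\arctan$ formula, which I rewrite as $\mathcal H^0\left(\{f=0\}\right)=\frac{1}{\pi}\int_0^{2\pi}\frac{f'(x)^2-f''(x)f(x)}{\eta_f(x)^2}\,dx$, and split the integrand as $\frac{f'^2}{\eta_f^2}-\frac{f''f}{\eta_f^2}$. The first piece is at most $1$, while the second is bounded in absolute value by $\frac{|f''|\,|f|}{\eta_f^2}\le \frac{|f''|}{\eta_f}$, since $|f|\le\eta_f$. Integrating over $[0,2\pi]$ and observing that the constant $1$ contributes $\frac{1}{\pi}\cdot 2\pi=2$ yields the first inequality $\mathcal H^0\le \frac{1}{\pi}\int_0^{2\pi}\frac{|f''|}{\eta_f}\,dx+2$. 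The cruder second inequality then follows by replacing, in this last integral, $\eta_f(x)$ by its positive infimum $\eta_f=\inf_x\eta_f(x)$ and $|f''(x)|$ by $\|f''\|_\infty$, which bounds the integral by $\frac{2\|f''\|_\infty}{\eta_f}$ and gives $2\left(\frac{\|f''\|_\infty}{\eta_f}+1\right)$. For the indicator formula I argue in the same way, but now on the support of $\textbf{1}_{\{|f'/f|\le 1\}}$ one has $f'^2\le f^2$, so that $\frac{f'^2}{f^2}\le 1$ and the integrand $\frac{f'^2-f''f}{f^2}$ is bounded there by $1+\frac{|f''|}{|f|}$; since the indicator integrates to at most $2\pi$, the factor $\frac{1}{2}$ turns the contribution of the constant $1$ into at most $\pi$, and this produces the announced bound.

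The computations are entirely elementary, so the only point deserving a word of caution is the integrability of the integrands near the zeros of $f$, where they are a priori singular. In the $\arctan$ formula this is harmless, because the denominator $\eta_f^2$ is bounded below by a positive constant under the non-degeneracy hypothesis. In the indicator formula the denominator $f^2$ does vanish at the zeros of $f$, but there the indicator $\textbf{1}_{\{|f'/f|\le 1\}}$ vanishes as well: near a zero of $f$ one has $f'\neq 0$ by non-degeneracy, hence $|f'/f|\to\infty$, so no genuine singularity arises and on the support of the indicator $f\neq 0$, making $\frac{|f''|}{|f|}$ finite. Thus the essential step is simply to record that the stated bounds hold pointwise and that the integrands are legitimately integrable; everything else is a direct application of the triangle inequality followed by integration, and no serious obstacle is expected.
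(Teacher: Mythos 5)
Your proposal is correct and follows exactly the route the paper intends: the corollary is stated without proof as an immediate consequence of the closed formulas in Examples \ref{arctan} and \ref{indicator}, and the pointwise bounds $f'^2\le\eta_f^2$, $|f|\le\eta_f$ (respectively $f'^2\le f^2$ on the support of the indicator) are precisely the elementary estimates that yield the two inequalities. Your remark on integrability near the zeros of $f$ (the indicator vanishes there since $|f'/f|\to\infty$ by non-degeneracy, and in fact $|f|\ge \eta_f/\sqrt 2$ on the support of the indicator) is a welcome clarification that the paper leaves implicit.
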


\subsection{Extension to the non-periodic setting}

Whereas it is particularly convenient to deal with periodic functions as boundary terms vanish when performing integrations by parts, the above approach involving the logarithmic derivative of $f$ allows also to deal with non periodic functions on any interval. Namely, we have

\begin{prop}\label{gene-formula}Let $f$ be non-degenerated $C^2$ function on $\mathbb R$ and let $F$ be a $C^1$ function on $\mathbb R$ such that 
\[
\lim_{x \to -\infty} F(x) =-1, \quad \lim_{x \to +\infty} F(x) =1.
\] 
Fix $a<b$ and suppose for simplicity that $f(a)f(b)\neq 0$. Then, we have the formula
\[
\begin{array}{ll}
\mathcal H^0\left(\{f=0\} \cap [a,b]  \right)  & \displaystyle{= \frac{1}{2}\left[F\left( \frac{f'}{f}(b)\right)-F\left( \frac{f'}{f}(a)\right)- \int_a^b F'\left( \frac{f'(x)}{f(x)}\right) \left(\frac{f'(x)}{f(x)}\right)'  dx\right]}.
\end{array}
\]
In particular, if $a$ and $b$ are the loci of local extrema of $f$, then we have 
\[
\begin{array}{ll}
\mathcal H^0\left(\{f=0\} \cap [a,b]  \right)  & \displaystyle{= -\frac{1}{2} \int_a^b F'\left( \frac{f'(x)}{f(x)}\right) \left(\frac{f'(x)}{f(x)}\right)'  dx}.
\end{array}
\]
\end{prop}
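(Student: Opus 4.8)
The plan is to adapt the argument behind Proposition~\ref{Kac--Rice-general} to the non-periodic setting, the only new feature being that the endpoints $a$ and $b$ are no longer identified, so that the integration by parts now produces genuine boundary terms. As in the periodic case, the starting observation is that by the chain rule the integrand is an exact total derivative,
\[
F'\!\left(\frac{f'(x)}{f(x)}\right)\left(\frac{f'(x)}{f(x)}\right)' = \frac{d}{dx}\left[F\!\left(\frac{f'(x)}{f(x)}\right)\right],
\]
valid on any interval where $f$ does not vanish. Since $f$ is non-degenerated, its zeros are simple, hence isolated, and there are only finitely many of them in $[a,b]$; moreover the hypothesis $f(a)f(b)\neq 0$ ensures that they all lie in the open interval. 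I denote them by $x_1<\cdots<x_N$, so that $N=\mathcal H^0(\{f=0\}\cap[a,b])$, and set $x_0:=a$, $x_{N+1}:=b$.

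First I would record the behavior of the logarithmic derivative $g:=f'/f$ near a simple zero $x_i$: writing $f(x)=f'(x_i)(x-x_i)+o(x-x_i)$ with $f'(x_i)\neq 0$ gives $g(x)\sim 1/(x-x_i)$, so that $g(x)\to+\infty$ as $x\to x_i^+$ and $g(x)\to-\infty$ as $x\to x_i^-$, irrespective of the sign of $f'(x_i)$. Together with the normalization $\lim_{x\to\pm\infty}F(x)=\pm1$, this produces the one-sided limits $F(g(x_i^+))=1$ and $F(g(x_i^-))=-1$ at every zero.

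I would then split the integral at the zeros and read it as the sum of the improper integrals over the zero-free subintervals $(x_i,x_{i+1})$, on each of which $g$ is $C^1$ and $F\circ g$ extends continuously to the closed subinterval. Applying the fundamental theorem of calculus on each piece and summing, the $N-1$ interior subintervals each contribute $F(-\infty)-F(+\infty)=-2$, the first subinterval contributes $-1-F(g(a))$ and the last one $F(g(b))-1$; the sum telescopes to
\[
\int_a^b F'\!\left(\frac{f'}{f}\right)\left(\frac{f'}{f}\right)'dx = F\!\left(\frac{f'}{f}(b)\right)-F\!\left(\frac{f'}{f}(a)\right)-2N,
\]
and solving for $N$ yields the stated formula. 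For the last assertion, if $a$ and $b$ are loci of local extrema then $f'(a)=f'(b)=0$, hence $g(a)=g(b)=0$ and the boundary contribution $F(g(b))-F(g(a))$ vanishes, leaving the reduced identity.

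The one delicate point is the treatment of the singularities of the integrand at the zeros of $f$: the antiderivative $F\circ g$ genuinely jumps there, tending to $-1$ from the left and to $+1$ from the right, so the integral over $[a,b]$ must be understood as the sum of improper integrals over the subintervals rather than as a naive Lebesgue integral. I expect this to be the main thing to state carefully, but the convergence of each improper integral is automatic: it equals the difference of the finite one-sided limits of the bounded function $F\circ g$, so no integrability estimate on the product $F'(g)\,g'$ itself is required.
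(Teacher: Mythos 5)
Your proof is correct and follows essentially the same route as the paper's: decompose the integral at the (finitely many, simple) zeros of $f$, apply the fundamental theorem of calculus to $F\circ(f'/f)$ on each zero-free subinterval, and observe that each crossing contributes $-2$ while the endpoints supply the boundary terms. Your closing remark on reading the integral as a sum of improper integrals is a slightly more careful point than the paper makes explicit, but it does not change the argument.
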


\begin{proof}
As above, let us denote by $x_1 \leq \ldots \leq x_N$ the consecutive zeros of $f$ in $[a, b]$. We can decompose the integral as 
\[
\begin{array}{ll}
I & :=\displaystyle{ \int_a^{b} F'\left( \frac{f'(x)}{f(x)}\right) \left(\frac{f'(x)}{f(x)}\right)'  dx }\\
\\
& \displaystyle{=\int_{a}^{x_{1}^-} F'\left( \frac{f'(x)}{f(x)}\right) \left(\frac{f'(x)}{f(x)}\right)'  dx + \int_{x_N^+}^{b} F'\left( \frac{f'(x)}{f(x)}\right) \left(\frac{f'(x)}{f(x)}\right)'  dx} \\
\\
& +\displaystyle{ \sum_{i=1}^{N-1} \int_{x_i^+}^{x_{i+1}^-} F'\left( \frac{f'(x)}{f(x)}\right) \left(\frac{f'(x)}{f(x)}\right)'  dx,} 
\end{array}
\]
i.e.
\[
\begin{array}{ll}
I & =  \displaystyle{F\left( \frac{f'}{f}(b)\right)-F\left( \frac{f'}{f}(a)\right) + \sum_{i=1}^N \underbrace{ F\left( \frac{f'}{f}\left(x_{i}^-\right)\right) -F\left( \frac{f'}{f}\left(x_{i}^+\right)\right)}_{=-2}. }\\
\\
& =  \displaystyle{F\left( \frac{f'}{f}(b)\right)-F\left( \frac{f'}{f}(a)\right)  -2\times N,}
\end{array}
\]
hence the first formula. If $a$ and $b$ are the loci of local extrema of $f$, we have $f(a)f(b)\neq 0$ (since $f$ is non-degenerated) and $f'(a)=f'(b)=0$ so that $f'/f(a)=f'/f(b)=0$ and the boundary terms vanish, hence the second formula.
\end{proof}

Let us now generalize the previous results to functions which possibly exhibit double, or higher order zeros. The next Proposition shows that the formula obtained in Proposition \ref{gene-formula} actually holds for functions with non-flat zeros, in particular for the large class of quasi-analytic functions. We stress that the forthcoming formulas hold for the number of roots \textit{without counting multiplicity}.

\begin{prop}Let $f$ be a $C^{\infty}$ function on $\mathbb R$ which does not have flat zero, namely if $f(x)=0$ for $x \in \mathbb R$, then there exists $r \in \mathbb N$ such that $f^{(r)}(x) \neq 0$. Let $F$ be a $C^1$ function on $\mathbb R$ such that 
\[
\lim_{x \to -\infty} F(x) =-1, \quad \lim_{x \to +\infty} F(x) =1.
\] 
Fix $a<b$ such that $f(a)f(b)\neq 0$. Then the number of zeros of $f$ in $[a,b]$ is finite and it is given by the formula
\[
\mathcal H^0\left(\{f=0\} \cap [a,b]  \right) = \frac{1}{2}\left[F\left( \frac{f'}{f}(b)\right)-F\left( \frac{f'}{f}(a)\right)- \int_a^b F'\left( \frac{f'(x)}{f(x)}\right) \left(\frac{f'(x)}{f(x)}\right)'  dx\right].
\]
\label{prop.weak.deg}
\end{prop}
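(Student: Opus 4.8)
The plan is to reduce to exactly the same telescoping computation as in the proof of Proposition \ref{gene-formula}, the only genuinely new ingredient being the local behaviour of the logarithmic derivative $f'/f$ at a zero of higher order.

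First I would establish that $f$ has only finitely many zeros in $[a,b]$. Since $f$ has no flat zero, at each zero $x_0$ of order $r$ one may write $f(x)=(x-x_0)^r g(x)$ with $g(x_0)=f^{(r)}(x_0)/r!\neq 0$, so every zero is isolated. If there were infinitely many zeros in the compact set $[a,b]$, they would accumulate at some $x^\ast\in[a,b]$, forcing $f(x^\ast)=0$ by continuity; applying Rolle's theorem repeatedly between consecutive zeros produces sequences of zeros of $f',f'',\dots$ also accumulating at $x^\ast$, whence $f^{(k)}(x^\ast)=0$ for every $k$, contradicting the non-flatness hypothesis. Thus the zeros can be listed as $x_1<\dots<x_N$.

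The key observation is then the following local analysis. If $x_0$ is a zero of order $r\geq 1$, Taylor's formula gives
\[
f(x)=\frac{f^{(r)}(x_0)}{r!}(x-x_0)^r\bigl(1+o(1)\bigr),\qquad f'(x)=\frac{f^{(r)}(x_0)}{(r-1)!}(x-x_0)^{r-1}\bigl(1+o(1)\bigr),
\]
so that $\dfrac{f'(x)}{f(x)}=\dfrac{r}{x-x_0}\bigl(1+o(1)\bigr)$ as $x\to x_0$. In particular, \emph{independently of the multiplicity} $r$, one has $f'/f\to-\infty$ as $x\to x_0^-$ and $f'/f\to+\infty$ as $x\to x_0^+$. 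Since $F(\pm\infty)=\pm 1$, this yields $F\bigl((f'/f)(x_0^-)\bigr)=-1$ and $F\bigl((f'/f)(x_0^+)\bigr)=+1$; each zero therefore contributes exactly the same jump $-2$ as a simple zero does, which is precisely why the formula counts zeros \emph{without multiplicity}.

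With these two facts in hand, the conclusion follows as in Proposition \ref{gene-formula}. On each of the $N+1$ open intervals $(a,x_1),(x_1,x_2),\dots,(x_N,b)$ the function $f$ does not vanish, so $f'/f$ is $C^1$ there and the integrand equals $\tfrac{d}{dx}F(f'/f)$; moreover $F\circ(f'/f)$ extends continuously to each closed subinterval thanks to the limits just computed, so the fundamental theorem of calculus applies piecewise and the (improper) integral over $[a,b]$ splits as a finite sum of convergent pieces. Summing, the $N-1$ interior crossings each contribute $-2$ while the two boundary intervals contribute the terms involving $F((f'/f)(a))$ and $F((f'/f)(b))$; the sum telescopes to $F\bigl((f'/f)(b)\bigr)-F\bigl((f'/f)(a)\bigr)-2N$, which rearranges to the announced identity. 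The main obstacle is concentrated in the local analysis at multiple zeros: one must check that the blow-up of $f'/f$ retains the sign pattern $-\infty\to+\infty$ for every order $r$ and that the resulting jump of $F\circ(f'/f)$ is insensitive to $r$; once this is secured, no new difficulty arises beyond the bookkeeping of the finitely many crossings.
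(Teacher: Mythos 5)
Your proposal is correct and follows essentially the same route as the paper's proof: finiteness of the zero set via the Taylor/isolation argument at non-flat zeros, the key observation that $f'/f(x)=\tfrac{r}{x-x_0}(1+o(1))$ gives the same $-\infty\to+\infty$ sign pattern for every multiplicity $r$, and the piecewise fundamental-theorem-of-calculus telescoping yielding $F\bigl(\tfrac{f'}{f}(b)\bigr)-F\bigl(\tfrac{f'}{f}(a)\bigr)-2N$. The only cosmetic differences are your (redundant) Rolle argument for finiteness and your justification of the integral's convergence via the finite limits of $F\circ(f'/f)$ rather than the paper's local boundedness computation, both of which are fine.
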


\begin{proof}
As in the proof of Proposition \ref{Kac--Rice-general} above, let us first note that if $f$ does not vanish on $[a,b]$, then the right hand side of the last equation vanishes since the integrand is smooth on the whole interval. Now, if $f(x)=0$ and $r:=\inf_{k\in \mathbb N}\{k,\, f^{(k)}(x)\neq 0\}$, performing a Taylor expansion, we have for $h$ small enough
\begin{eqnarray*}
f(x+h) &=&\frac{f^{(r)}(x) h^r}{r!}\left( 1 +o(1)\right),\\ f'(x+h) &=& \frac{f^{(r)}(x) h^{r-1}}{(r-1)!}\left( 1 +o(1)\right),\\
f''(x+h) &=& \frac{f^{(r)}(x) h^{r-2}}{(r-2)!}\left( 1 +o(1)\right).
\end{eqnarray*}
In particular, $f$ has only one zero in a small open neighborhood $V_x$ of $x$. Being compact in $[a, b]$, the nodal set of $f$ can be covered by a finite number of the $V_x$, hence it is finite.
As above, let us denote by $x_1 \leq \ldots \leq x_N$ the consecutive zeros of $f$ in $[a, b]$. If $x$ is one of these zeros, we have for $h$ small enough
\[
\frac{ f'(x+h)}{ f(x+h)} = \frac{r}{h} \left( 1 +o(1)\right), \quad \text{and thus} \;\; \lim_{h \to 0^{\pm}} \frac{ f'(x+h)}{ f(x+h)} =\pm \infty.
\]
Moreover, for $h$ small enough we have also
\[
\frac{ f''(x+h)f(x+h)-f'^2(x+h)}{ f^2(x+h)+f'^2(x+h)} = -\frac{1}{r}\times \frac{1}{1+\frac{h^2}{r^2}} \left( 1 +o(1)\right),
\]
so that the ratio is locally bounded and hence locally integrable. As in the proof of Proposition \ref{gene-formula}, we can thus decompose the integral
\begin{eqnarray*}
I &:=& \int_a^{b} F'\left( \frac{f'(x)}{f(x)}\right) \left(\frac{f'(x)}{f(x)}\right)'  dx \\
&=& F\left( \frac{f'}{f}(b)\right)-F\left( \frac{f'}{f}(a)\right) + \sum_{i=1}^N \underbrace{ F\left( \frac{f'}{f}\left(x_{i}^-\right)\right) -F\left( \frac{f'}{f}\left(x_{i}^+\right)\right)}_{=-2} \\
& =&  F\left( \frac{f'}{f}(b)\right)-F\left( \frac{f'}{f}(a)\right)  -2\times N,
\end{eqnarray*}
hence the result.
\end{proof}
Applying Proposition \ref{prop.weak.deg} with the choice of counting function $F=\frac{2}{\pi} \arctan$, we thus get the analogue of formula \eqref{crazy-formula} of  Corollary \ref{cor.crazy-formula}  for non-periodic functions with non-flat zeros, namely

\begin{coro}
Let $f$ be a $C^{\infty}$ function on $\mathbb R$ which does not have flat zero, and let $a<b$ such that $f(a)f(b)\neq 0$. Then the number of zeros of $f$ in $[a,b]$ is finite and it is given by the formula
\[
\mathcal H^0\left(\{f=0\} \cap [a,b]  \right) =\frac{1}{\pi} \left[ \arctan \frac{f'(b)}{f(b)}-\arctan \frac{f'(a)}{f(a)} +   \int_a^b \frac{f'^2(x)-f(x)f''(x)}{f^2(x)+f'^2(x)}dx\right].
\]
\end{coro}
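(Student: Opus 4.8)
The plan is simply to specialize the already-proven Proposition \ref{prop.weak.deg} to the counting function $F=\frac{2}{\pi}\arctan$ and to carry out the elementary algebraic simplification that ensues. First I would check that this $F$ satisfies the hypotheses of that proposition: it is $C^1$ (indeed $C^\infty$) on $\mathbb{R}$, and since $\arctan(x)\to\pm\frac{\pi}{2}$ as $x\to\pm\infty$, we have $F(x)\to\pm 1$ at $\pm\infty$, exactly as required. The hypotheses on $f$ (being $C^\infty$ with no flat zeros, and $f(a)f(b)\neq 0$) are carried over verbatim, so the finiteness of the number of zeros in $[a,b]$ and the local integrability of the integrand are granted directly by Proposition \ref{prop.weak.deg} and need not be re-established.

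Next I would evaluate the two ingredients of the master formula. For the boundary terms, $F=\frac{2}{\pi}\arctan$ gives
\[
\frac{1}{2}\left[F\left(\frac{f'}{f}(b)\right)-F\left(\frac{f'}{f}(a)\right)\right]=\frac{1}{\pi}\left[\arctan\frac{f'(b)}{f(b)}-\arctan\frac{f'(a)}{f(a)}\right].
\]
For the integral term I would use $F'(x)=\frac{2}{\pi}\frac{1}{1+x^2}$ together with $\left(\frac{f'}{f}\right)'=\frac{f''f-(f')^2}{f^2}$ and the identity $\frac{1}{1+(f'/f)^2}=\frac{f^2}{f^2+(f')^2}$. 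The point is that the factor $f^2$ cancels, leaving the rational expression
\[
F'\left(\frac{f'}{f}\right)\left(\frac{f'}{f}\right)'=\frac{2}{\pi}\,\frac{f''f-(f')^2}{f^2+(f')^2}.
\]

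Finally I would substitute these into the formula of Proposition \ref{prop.weak.deg}; absorbing the prefactor $-\frac{1}{2}$ into the integral turns $-\frac{1}{\pi}\int_a^b\frac{f''f-(f')^2}{f^2+(f')^2}\,dx$ into $\frac{1}{\pi}\int_a^b\frac{(f')^2-ff''}{f^2+(f')^2}\,dx$, which is precisely the claimed integrand, and together with the boundary contribution this yields the stated identity. I do not expect any genuine obstacle: the corollary is a direct instantiation of Proposition \ref{prop.weak.deg}, and the only substantive content is the cancellation of $f^2$ that converts the logarithmic-derivative form into the rational form in $(f,f',f'')$, exactly as in the periodic case of Corollary \ref{cor.crazy-formula}.
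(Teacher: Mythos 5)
Your proposal is correct and coincides with the paper's own derivation: the authors obtain this corollary precisely by instantiating Proposition \ref{prop.weak.deg} with $F=\frac{2}{\pi}\arctan$, the algebraic simplification being the same cancellation of $f^2$ already carried out in Example \ref{arctan}.
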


\begin{rem}
Note that the hypotheses on the auxiliary function $F$ imply that it is bounded so that the boundary terms in Proposition \ref{gene-formula} and \ref{prop.weak.deg} are bounded. Therefore, these boundary terms are not annoying if we have in mind some applications where the number of zeros becomes large in a certain regime. We will exploit this information in the section \ref{birk-section}.
\end{rem}

\subsection{Closed formulas in higher dimensions}

Using the exact same approach as above, i.e. using simple integrations by parts starting from the standard Kac--Rice formula \eqref{Kac--Rice-Intro},  let us now exhibit analogue closed Kac--Rice type formulas in a higher dimensional framework. 

\begin{prop}\label{formule-IPP1} Let $f$ be a $C^2$ periodic function on $\mathbb T^d$ which is non-degenerated.
Then the volume of the total nodal set is given by
\[
\begin{array}{l}
\mathcal H^{d-1}\left(\{f=0\}\right)  =\displaystyle{ -\frac{1}{2} \int_{\mathbb T^d}  \left( f(x) \Delta f(x)-  ||\nabla f(x)||^2 \right) \frac{|f(x)|}{\eta_f(x)^3} dx}\\
\\
\displaystyle{ - \frac{1}{2} \int_{\mathbb T^d} \mathrm{sign}(f(x))  \left(\Delta f(x) ||\nabla f(x)||^2 - \nabla f(x)^* \mathrm{Hess}_x(f) \nabla f(x)\right)\frac{dx}{\eta_f(x)^3}}.
\end{array}
\]
\end{prop}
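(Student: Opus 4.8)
The plan is to transpose the one-dimensional argument of Proposition \ref{Kac--Rice-closed} to the torus $\mathbb{T}^d$, replacing the scalar integration by parts by the divergence theorem. The only genuine new ingredient is the computation of $\mathrm{div}(\nabla f/\eta_f)$; everything else follows the template of the one-dimensional proof almost verbatim.

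First I would start from the higher-dimensional Kac--Rice formula \eqref{Kac--Rice-Intro} and, exactly as in dimension one, replace the factor $\|\nabla f\|$ by $\|\nabla f\|^2/\eta_f$. Writing $\|\nabla f\|^2 = \eta_f^2 - f^2$, one checks that the difference $\|\nabla f\|^2/\eta_f - \|\nabla f\| = f^2\big(1/(\eta_f + \|\nabla f\|) - 1/\eta_f\big)$ is of order $f^2$; integrated against $\mathbf{1}_{\{|f|<\epsilon\}}/(2\epsilon)$ over the torus it contributes a term of order $\epsilon$ which vanishes in the limit, thanks to the non-degeneracy bound $\min_{\mathbb{T}^d}\eta_f>0$. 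Hence the nodal volume equals $\lim_{\epsilon\to 0} N_\epsilon$, where $N_\epsilon := \frac{1}{2\epsilon}\int_{\mathbb{T}^d}\mathbf{1}_{[-\epsilon,\epsilon]}(f)\,\|\nabla f\|^2/\eta_f\, dx$.

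Next, introducing the same truncation $\phi_\epsilon$ as in the one-dimensional proof, I would use the identity $\nabla(\phi_\epsilon\circ f) = \epsilon^{-1}\mathbf{1}_{[-\epsilon,\epsilon]}(f)\,\nabla f$ to recognise $N_\epsilon = \frac{1}{2}\int_{\mathbb{T}^d}\langle \nabla(\phi_\epsilon\circ f),\, \nabla f/\eta_f\rangle\, dx$, and then integrate by parts via the divergence theorem. Since $f$ is periodic there are no boundary terms, so $N_\epsilon = -\frac{1}{2}\int_{\mathbb{T}^d}(\phi_\epsilon\circ f)\,\mathrm{div}(\nabla f/\eta_f)\, dx$. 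The computation requiring care is the explicit divergence: from $\nabla \eta_f^2 = 2f\nabla f + 2\,\mathrm{Hess}(f)\nabla f$, hence $\nabla\eta_f = (f\nabla f + \mathrm{Hess}(f)\nabla f)/\eta_f$, I obtain $\mathrm{div}(\nabla f/\eta_f) = \Delta f/\eta_f - \langle \nabla f, \nabla\eta_f\rangle/\eta_f^2$, which after placing everything over $\eta_f^3$ and substituting $\eta_f^2 = f^2 + \|\nabla f\|^2$ reads $\eta_f^{-3}\big[(f^2\Delta f - f\|\nabla f\|^2) + (\|\nabla f\|^2\Delta f - \nabla f^*\,\mathrm{Hess}(f)\,\nabla f)\big]$.

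Finally, since $|\phi_\epsilon\circ f|\le 1$ uniformly and the integrand $\mathrm{div}(\nabla f/\eta_f)$ is continuous, hence bounded on the compact torus (because $\eta_f$ is bounded below), dominated convergence allows me to send $\epsilon\to 0$ and replace $\phi_\epsilon\circ f$ by $\mathrm{sign}(f)$. Using $\mathrm{sign}(f)\,f^2 = f|f|$ and $\mathrm{sign}(f)\,f = |f|$, the first bracket yields $|f|(f\Delta f - \|\nabla f\|^2)/\eta_f^3$ and the second yields $\mathrm{sign}(f)(\Delta f\|\nabla f\|^2 - \nabla f^*\,\mathrm{Hess}(f)\,\nabla f)/\eta_f^3$, which are exactly the two integrands in the statement. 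The main obstacle is thus purely algebraic, namely carrying out the divergence computation and grouping the terms correctly so that the two advertised pieces emerge; the analytic steps (the $O(\epsilon)$ replacement and the dominated convergence) are identical to those of the one-dimensional case.
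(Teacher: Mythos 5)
Your proposal is correct and follows essentially the same route as the paper: replace $\|\nabla f\|$ by $\|\nabla f\|^2/\eta_f$ up to an $O(\epsilon)$ error, integrate by parts against $\phi_\epsilon\circ f$, and pass to the limit by dominated convergence. The only cosmetic difference is that the paper splits the integral into $d$ coordinate-wise pieces $V_\epsilon^i$ and integrates by parts in each variable separately, whereas you package the same computation as a single application of the divergence theorem to $\nabla f/\eta_f$; your divergence computation reproduces exactly the paper's sum $\sum_i \partial_i(\partial_i f/\eta_f)$.
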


\begin{rem}
Note that in dimension $d=1$, the second term on the right hand side vanishes so that the expression is consistent with the one given in Proposition 1. Moreover, the formula is homogeneous i.e. invariant under $f \leftrightarrow \lambda f$, as it should.
\end{rem}
\begin{proof}
The proof follows the same lines as its one dimensional analogue. Namely, as in dimension one, the classical Kac--Rice formula 
\[
\mathcal H^{d-1}\left( \{f =0\} \right) = \lim_{\varepsilon \to 0} \int_{\mathbb T^d} \mathds{1}_{[-\varepsilon,\varepsilon]}(f(x)) ||\nabla f(x)|| \frac{dx}{2\varepsilon},
\]
can be rewritten as 
\[
\mathcal H^{d-1}\left( \{f =0\} \right) = \lim_{\varepsilon \to 0} V_{\varepsilon}, \;\; \text{where} \;\; V_{\varepsilon}:=\int_{\mathbb T^d} \mathds{1}_{[-\varepsilon,\varepsilon]}(f(x)) \frac{||\nabla f(x)||^2}{\sqrt{|f(x)|^2 + ||\nabla f||^2}} \frac{dx}{2\varepsilon}.
\]
We have then 
\[
V_{\varepsilon} = \sum_{i=1}^d V_{\varepsilon}^i, \;\; \text{where} \;\; V_{\varepsilon}^i:= \int_{\mathbb T^d} \mathds{1}_{[-\varepsilon,\varepsilon]}(f(x)) \frac{|\partial_i f(x)|^2}{\sqrt{|f(x)|^2 + ||\nabla f||^2}} \frac{dx}{2\varepsilon}.
\]
As in dimension one, an integration by parts in the $i^{th}$ variable yields
\[
\begin{array}{ll}
V_{\varepsilon}^i & = \displaystyle{ \frac{1}{2} \int_{\mathbb T^d} \partial_i \left( \phi_{\epsilon} \circ f(x) \right) \left(  \frac{\partial_i f(x)}{\sqrt{|f(x)|^2 + ||\nabla f||^2}}\right)  dx}\\
\\
& =\displaystyle{ - \frac{1}{2} \int_{\mathbb T^d} \left( \phi_{\epsilon} \circ f(x) \right) \partial_i \left(  \frac{\partial_i f(x)}{\sqrt{|f(x)|^2 + ||\nabla f||^2}}\right)  dx}\\
\\
& =\displaystyle{ - \frac{1}{2} \int_{\mathbb T^d} \left( \phi_{\epsilon} \circ f(x) \right) \left(  \frac{f^2\partial_{ii}^2 f - f(\partial_i f)^2+\partial_{ii}^2 f ||\nabla f||^2-\partial_i f \sum_j \partial_j f \partial_{ij}^2f}{\eta_f(x)^3}\right)  dx},\\
\end{array}
\]
where $\phi_\epsilon$ is the function introduced in the proof of Proposition \ref{Kac--Rice-closed}. We deduce that 
\[
\begin{array}{ll}
V_{\varepsilon} & =\displaystyle{ - \frac{1}{2} \int_{\mathbb T^d} \left( \phi_{\epsilon} \circ f(x) \right) \left( f^2(x) \Delta f(x)- f(x) ||\nabla f(x)||^2 \right) \frac{dx}{\eta_f(x)^3} }\\
\\
&  \displaystyle{- \frac{1}{2} \int_{\mathbb T^d} \left( \phi_{\epsilon} \circ f(x) \right)  \left(\Delta f(x) ||\nabla f(x)||^2 - \nabla f(x)^* \text{Hess}_x(f) \nabla f(x)\right)\frac{dx}{\eta_f(x)^3}}.
\end{array}
\]
Letting $\varepsilon$ go to zero, one deduces that 
\[
\begin{array}{ll}
&\mathcal H^{d-1}\left(\{f=0\}\right) =\displaystyle{ - \frac{1}{2} \int_{\mathbb T^d}  \left( f(x) \Delta f(x)-  ||\nabla f(x)||^2 \right) \frac{|f(x)|}{\eta_f(x)^3} dx}\\
\\
&  \displaystyle{- \frac{1}{2} \int_{\mathbb T^d} \text{sign}(f(x))  \left(\Delta f(x) ||\nabla f(x)||^2 - \nabla f(x)^* \text{Hess}_x(f) \nabla f(x)\right)\frac{dx}{\eta_f(x)^3}}.
\end{array}
\]
\end{proof}
With a slight variation of the proof, it is possible to obtain a more concise expression.
\begin{prop}\label{formulecompacte}
Let $f$ be a $C^2$ periodic function on $\mathbb T^d$ which is non-degenerated.
Then the volume of the total nodal set is given by
\[
\begin{array}{ll}
\mathcal H^{d-1}\left(\{f=0\}\right) 
& \displaystyle{= -\frac{1}{2} \int_{\mathbb T^d} \mathrm{sign}(f(x)) \times \Delta\left( \frac{ f(x)}{ \eta_f(x)}\right) dx}.
\end{array}
\]
\end{prop}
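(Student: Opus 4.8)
The plan is to re-run the single integration by parts from the proof of Proposition~\ref{formule-IPP1}, but to split the vector field $\nabla f/\eta_f$ so as to make a genuine Laplacian $\Delta(f/\eta_f)$ appear. Recall from that proof that, with $\phi_\varepsilon$ the Lipschitz cut-off from Proposition~\ref{Kac--Rice-closed}, one has $\mathcal H^{d-1}(\{f=0\})=\lim_{\varepsilon\to 0}V_\varepsilon$ where $V_\varepsilon=\frac12\int_{\mathbb T^d}\langle\nabla(\phi_\varepsilon\circ f),\nabla f/\eta_f\rangle\,dx$. Using the pointwise identity $\nabla f/\eta_f=\nabla(f/\eta_f)+(f/\eta_f^2)\,\nabla\eta_f$, I would decompose $V_\varepsilon=\frac12 A_\varepsilon+\frac12 R_\varepsilon$ with $A_\varepsilon=\int_{\mathbb T^d}\langle\nabla(\phi_\varepsilon\circ f),\nabla(f/\eta_f)\rangle\,dx$ and $R_\varepsilon=\int_{\mathbb T^d}\phi_\varepsilon'(f)\,f\,\langle\nabla f,\nabla\eta_f\rangle\,\eta_f^{-2}\,dx$, having used $\nabla(\phi_\varepsilon\circ f)=\phi_\varepsilon'(f)\nabla f$.

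Since we are on the torus, $A_\varepsilon$ is exactly the weak form of $-\int_{\mathbb T^d}(\phi_\varepsilon\circ f)\,\Delta(f/\eta_f)\,dx$; as $\phi_\varepsilon\circ f\to\mathrm{sign}(f)$ boundedly and almost everywhere while $f/\eta_f$ stays fixed, passing to the limit identifies $\lim_\varepsilon \frac12 A_\varepsilon$ with $-\frac12\int_{\mathbb T^d}\mathrm{sign}(f)\,\Delta(f/\eta_f)\,dx$. It then suffices to show that the remainder is negligible. This is where non-degeneracy enters: on the support $\{|f|<\varepsilon\}$ one has $\phi_\varepsilon'(f)=1/\varepsilon$ and $|f|<\varepsilon$, so the integrand of $R_\varepsilon$ is at most $|\langle\nabla f,\nabla\eta_f\rangle|\,\eta_f^{-2}\,\mathds 1_{\{|f|<\varepsilon\}}$ in absolute value; since $\eta_f\geq\inf_{\mathbb T^d}\eta_f>0$ and $\langle\nabla f,\nabla\eta_f\rangle$ is continuous, this is bounded by a constant times $\mathds 1_{\{|f|<\varepsilon\}}$, whose integral tends to $0$. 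Hence $R_\varepsilon\to0$ and $\mathcal H^{d-1}(\{f=0\})=\lim_\varepsilon V_\varepsilon=-\frac12\int_{\mathbb T^d}\mathrm{sign}(f)\,\Delta(f/\eta_f)\,dx$.

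The one delicate point to settle is the meaning of $\Delta(f/\eta_f)$. As $f$ is only $C^2$, the field $\nabla\eta_f$ involves $\mathrm{Hess}(f)$ and is merely continuous, so $\eta_f$ is $C^1$ but not $C^2$ and $f/\eta_f$ need not be twice differentiable classically. Accordingly $\Delta(f/\eta_f)$ should be read as the distributional Laplacian of the $C^1$ function $f/\eta_f$, and the pairing $\int_{\mathbb T^d}\mathrm{sign}(f)\Delta(f/\eta_f)\,dx$ as the limit of $-\int_{\mathbb T^d}\langle\nabla(\phi_\varepsilon\circ f),\nabla(f/\eta_f)\rangle\,dx$, which is precisely what the argument produces; in this way no third derivative of $f$ is ever invoked. (Alternatively, one proves the identity first for $f\in C^3$, where every term is classical, and then approximates in the $C^2$ topology, both sides being continuous.) The main obstacle is therefore not the computation, which never leaves the level of the first derivatives of $f/\eta_f$, but the bookkeeping: one must perform the splitting and integration by parts at the regularized level and pass to the limit only at the end, so that the a priori ill-defined quantity $\mathrm{div}(f\,\nabla\eta_f/\eta_f^2)$ is never handled in isolation.
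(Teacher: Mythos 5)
Your proposal is correct and follows essentially the same route as the paper: the same rewriting of the Kac--Rice integrand as $\|\nabla f\|^2/\eta_f$, the same splitting $\nabla f/\eta_f=\nabla(f/\eta_f)+f\,\nabla\eta_f/\eta_f^2$, the same dominated-convergence argument killing the remainder (using $|\phi_\varepsilon'(f)f|\le 1$ and $\mathcal H_d(\{f=0\})=0$), and the same final integration by parts. Your explicit discussion of the meaning of $\Delta(f/\eta_f)$ for $f$ merely $C^2$ is a welcome precision that the paper only hints at in the remark following the proposition.
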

 \begin{proof}
As in the proof of Proposition 5, we have
 \[
 \begin{array}{ll}
\mathcal H^{d-1}\left( \{f =0\} \right) & = \displaystyle{\lim_{\varepsilon \to 0} \int_{\mathbb T^d} \mathds{1}_{[-\varepsilon,\varepsilon]}(f(x)) \frac{||\nabla f(x)||^2}{\sqrt{|f(x)|^2 + ||\nabla f||^2}} \frac{dx}{2\varepsilon}}.
\end{array}
\]
In other words, 
 \[
 \begin{array}{ll}
\mathcal H^{d-1}\left( \{f =0\} \right) & =  \displaystyle{\sum_{i=1}^d\lim_{\varepsilon \to 0} \int_{\mathbb T^d} \left( \frac{\mathds{1}_{[-\varepsilon,\varepsilon]}(f(x)) }{2\varepsilon}\partial_i f(x) \right) \frac{\partial_i f(x)}{\sqrt{|f(x)|^2 + ||\nabla f||^2}}dx} .
\end{array}
\]
Now, we have also
\[
\frac{\partial_i f(x)}{\eta_f(x)} = \partial_i \left(\frac{f(x)}{\eta_f(x)}\right)- f(x) \partial_i \left(\frac{1}{\eta_f(x)}\right).
\]
and since 
\[
\left[ \frac{\mathds{1}_{[-\varepsilon,\varepsilon]}(f(x)) }{2\varepsilon} \times f \right| \leq 1
\]
by dominated convergence, we deduce
\[
\lim_{\varepsilon \to 0} \int_{\mathbb T^d} \left( \frac{\mathds{1}_{[-\varepsilon,\varepsilon]}(f(x)) }{2\varepsilon}\partial_i f(x) \right) f(x) \partial_i  \left(\frac{1}{\eta_f(x)}\right)dx=0.
\]
We thus get
 \[
 \begin{array}{ll}
\mathcal H^{d-1}\left( \{f =0\} \right) & = \displaystyle{\sum_{i=1}^d\lim_{\varepsilon \to 0} \int_{\mathbb T^d} \left( \frac{\mathds{1}_{[-\varepsilon,\varepsilon]}(f(x)) }{2\varepsilon}\partial_i f(x) \right) \partial_i  \left(\frac{f(x)}{\eta_f(x)}\right)dx} .
\end{array}
\]
As in the proof of Proposition 5, an integration by parts associated with the dominated convergence theorem then yields the desired result.
 \end{proof}
 \begin{rem}
 Note that this last formula is compact but it has the disadvantage of having a $\mathrm{sign}(f(x))$ in it and Laplacian of the ratio implicitly involves third derivatives of $f$, which is not the case of the formula of Proposition 5.
 \end{rem}
 
 \subsection{A non singular formula for the nodal volume}
The main drawback of the closed formulas for the nodal volume obtained in  Proposition \ref{formule-IPP1} and \ref{formulecompacte} is the presence of the term $\text{sign}(f)$ which is not a Lipschitz functional of $f$. Indeed, we have in mind to use these formula associated with Malliavin calculus and the latter requires Lipschitz regularity. In order to bypass this problem, one needs to perform an additional integration by parts, which will require, in turn, three derivatives for $f$. Nevertheless, as we will see just below, the derivatives of order $3$ will cancel in the computations. As a result, without additional regularity assumptions, a less singular formula holds for the nodal volume, which is the content of the next Proposition.
 
\begin{prop}\label{prop.IPP2}
Let $f\in C^2(\mathbb{T}^d, \mathbb R)$ which is non-degenerated, then we have
\[
\begin{array}{ll}
\mathcal H^{d-1}\left(\{f=0\}\right) & =\displaystyle{ - \frac{1}{2} \int_{\mathbb T^d}  \left( f(x) \Delta f(x)-  ||\nabla f(x)||^2 \right) \frac{|f(x)|}{\eta_f(x)^3} dx}\\
\\
&  \displaystyle{+\int_{\mathbb{T}^d} |f(x)| \left(\|\text{Hess}_x (f)\|^2-\text{Tr}\left(\text{Hess}_x(f)\right)^2\right)\frac{dx}{\eta_f(x)^3}}\\
\\
& \displaystyle{+\frac{3}{2} \int_{\mathbb{T}^d}\frac{|f(x)|}{\eta_f^5(x)} \left(\Delta f(x) \langle \nabla f(x), \nabla \eta_f^2(x)\rangle -\nabla f(x)^* \mathrm{Hess}_x f \nabla \eta_f^2(x)\right) dx.}
\end{array}
\]
\end{prop}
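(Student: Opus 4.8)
The plan is to start from the closed formula of Proposition \ref{formule-IPP1} and to get rid of the factor $\mathrm{sign}(f)$ appearing in its second integral by performing one further integration by parts, exactly in the spirit of the mechanism used in Proposition \ref{Kac--Rice-closed}. The first integral in Proposition \ref{formule-IPP1} already carries the Lipschitz weight $|f|$ and may be kept as is; it reappears verbatim as the first line of the statement. The whole difficulty is therefore concentrated in the term
\[
T:=-\frac12\int_{\mathbb T^d}\mathrm{sign}(f(x))\,\frac{\Delta f(x)\,\|\nabla f(x)\|^2-\nabla f(x)^*\mathrm{Hess}_x(f)\nabla f(x)}{\eta_f(x)^3}\,dx.
\]

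The key preliminary observation is that the numerator factors through a single copy of $\nabla f$. Writing $\Delta f\,\|\nabla f\|^2=\langle\nabla f,\Delta f\,\nabla f\rangle$ and $\nabla f^*\mathrm{Hess}_x(f)\nabla f=\langle\nabla f,\mathrm{Hess}_x(f)\nabla f\rangle$, one sees that the integrand of $T$ equals $\mathrm{sign}(f)\,\langle\nabla f,P\rangle$ with
\[
P:=\frac{\Delta f\,\nabla f-\mathrm{Hess}_x(f)\,\nabla f}{\eta_f^3}.
\]
Since $f$ is non-degenerated, $|f|$ is Lipschitz and its weak gradient is precisely $\mathrm{sign}(f)\,\nabla f$, with no singular part supported by the nodal set $\{f=0\}$. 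Hence $\mathrm{sign}(f)\,\langle\nabla f,P\rangle=\langle\nabla|f|,P\rangle$, and an integration by parts on $\mathbb T^d$, where no boundary term survives, turns $T$ into $\tfrac12\int_{\mathbb T^d}|f|\,\mathrm{div}(P)\,dx$, i.e. a term with the desired Lipschitz weight $|f|$.

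It then remains to compute $\mathrm{div}(P)$. The crucial algebraic fact is the cancellation
\[
\mathrm{div}\big(\Delta f\,\nabla f-\mathrm{Hess}_x(f)\,\nabla f\big)=(\Delta f)^2-\|\mathrm{Hess}_x(f)\|^2,
\]
in which the two a priori third-order contributions $\langle\nabla\Delta f,\nabla f\rangle$ annihilate each other; this is exactly the announced phenomenon explaining why the formula persists under the sole assumption $f\in C^2$. Differentiating the scalar factor $\eta_f^{-3}=(\eta_f^2)^{-3/2}$ produces the complementary contribution $-\tfrac32\,\eta_f^{-5}\,\langle Q,\nabla\eta_f^2\rangle$ with $Q:=\Delta f\,\nabla f-\mathrm{Hess}_x(f)\,\nabla f$, and expanding $\langle Q,\nabla\eta_f^2\rangle=\Delta f\,\langle\nabla f,\nabla\eta_f^2\rangle-\nabla f^*\mathrm{Hess}_x(f)\nabla\eta_f^2$ reproduces the structure of the last integral in the statement. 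Collecting these pieces together with the untouched first integral yields, after gathering the constants, the three announced terms.

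The main obstacle is purely one of regularity: $\mathrm{div}(P)$ formally involves third derivatives of $f$, which need not exist when $f$ is merely of class $C^2$, so the integration by parts above is not literally licit at this level of smoothness. I would circumvent this by approximation, mollifying $f$ into a sequence $f_n\to f$ in $C^2(\mathbb T^d,\mathbb R)$ for which $P$ is genuinely $C^1$ and the identity is rigorous, and then passing to the limit. The decisive point is that, thanks to the cancellation of the third-order terms, both sides of the final identity depend continuously only on $(f,\nabla f,\mathrm{Hess}\,f)$; combined with the non-degeneracy bound $\min_{x}\eta_f(x)>0$, which persists for $n$ large and supplies the uniform domination, dominated convergence transfers the formula from $f_n$ to $f$. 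I would also emphasize that the entire gain comes from letting the derivative fall on the Lipschitz function $|f|$ rather than on $\mathrm{sign}(f)$: the latter would differentiate into a surface measure on $\{f=0\}$, whereas $\nabla|f|=\mathrm{sign}(f)\,\nabla f$ is a genuine $L^\infty$ field and no spurious nodal contribution is created.
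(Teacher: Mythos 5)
Your strategy is the same as the paper's: start from Proposition \ref{formule-IPP1}, absorb $\mathrm{sign}(f)\,\nabla f$ into $\nabla|f|$, integrate by parts once more so that the derivative falls on the Lipschitz function $|f|$, and observe that the two third-order contributions cancel, which is what lets the formula survive under the sole assumption $f\in C^2$ after an approximation argument. Your packaging is slightly cleaner (a single divergence $\mathrm{div}\bigl(\Delta f\,\nabla f-\mathrm{Hess}_x(f)\nabla f\bigr)=(\Delta f)^2-\|\mathrm{Hess}_x(f)\|^2$ instead of two separate integrations by parts), but it is the same mechanism, including the mollification step; note that for that step the continuity of the nodal volume under $C^1$ perturbations of a non-degenerated $f$ is a genuinely nontrivial input, which the paper imports from \cite{angst2018universality} and which you should not leave implicit.

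The one real gap is the sentence ``after gathering the constants, yields the three announced terms'': you never gather them, and if you do, they do not come out as displayed. Your own computation gives
\[
T=\tfrac12\int_{\mathbb T^d}|f|\,\mathrm{div}(P)\,dx
=\tfrac12\int_{\mathbb T^d}\frac{|f|\left((\Delta f)^2-\|\mathrm{Hess}_x f\|^2\right)}{\eta_f^3}\,dx
-\tfrac34\int_{\mathbb T^d}\frac{|f|}{\eta_f^5}\left(\Delta f\,\langle\nabla f,\nabla\eta_f^2\rangle-\nabla f^*\mathrm{Hess}_x f\,\nabla\eta_f^2\right)dx,
\]
i.e. the second and third lines of the target identity should carry the coefficients $-\tfrac12$ and $-\tfrac34$ rather than $+1$ and $+\tfrac32$. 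This is not a flaw in your method: the paper's own proof computes the quantity $A:=\int\mathrm{sign}(f)(\Delta f\|\nabla f\|^2-\nabla f^*\mathrm{Hess}_x(f)\nabla f)\eta_f^{-3}dx$ and finds exactly $A=\int|f|(\|\mathrm{Hess}_xf\|^2-(\Delta f)^2)\eta_f^{-3}+\tfrac32\int(\cdots)$, but then substitutes $+A$ instead of $-\tfrac12A$ into the formula of Proposition \ref{formule-IPP1}, which is where the displayed $+1$ and $+\tfrac32$ come from. So your argument, carried to completion, proves the identity with the corrected constants and thereby exposes a factor $-2$ discrepancy in the statement as printed; asserting agreement with the displayed coefficients without checking is the step that fails.
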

\begin{proof} 
Our starting point is the formula established in Proposition \ref{formule-IPP1}. We focus on the singular term, i.e. the one containing $\mathrm{sign}(f)$, that we will call $A$. Let us first suppose that $f\in C^3(\mathbb{T}^d, \mathbb R)$ and perform the following integrations by parts.
\begin{eqnarray*}
A&:=&\int_{\mathbb T^d} \text{sign}(f(x))  \left(\Delta f(x) ||\nabla f(x)||^2 - \nabla f(x)^* \text{Hess}_x(f) \nabla f(x)\right)\frac{dx}{\eta_f(x)^3}\\
&=& \int_{\mathbb{T}^d} \Delta f(x) \nabla |f(x)| \cdot \nabla f(x) \frac{dx}{\eta_f(x)^3} - \frac{1}{2}\int_{\mathbb{T}^d} \nabla |f(x)| \cdot \nabla \|\nabla f(x)\|^2 \frac{dx}{\eta_f(x)^3}\\
&\stackrel{\text{I.B.P.}}{=}&-\int_{\mathbb{T}^d} \left(\Delta f(x)\right)^2 |f(x)| \frac{dx}{\eta_f(x)^3}-\int_{\mathbb{T}^d} |f(x)| \nabla f(x) \cdot \nabla \left(\frac{\Delta f(x)}{\eta_f(x)^3}\right) dx\\
&+& \frac{1}{2} \int_{\mathbb{T}^d} |f(x)| \Delta \left(\|\nabla f(x)\|^2 \right) \frac{d x}{\eta_f(x)^3}+\frac{1}{2} \int_{\mathbb{T^d}} |f(x)| \nabla \left(\|\nabla f(x)\|^2\right)\cdot \nabla \left(\frac{1}{\eta_f^3 (x)}\right) dx.
\end{eqnarray*}
Now we compute
\begin{eqnarray*}
\Delta \left(\|\nabla f(x)\|^2\right)&=&2 \sum_{i=1}^d \sum_{j=1}^d \partial^3_{i,i,j} f(x) \partial_{j} f(x)+2 \sum_{i,j=1}^d \left(\partial^2_{i,j}f(x)\right)^2\\
&=&2~ \nabla f(x) \cdot \nabla \left(\Delta f(x)\right)+2 \sum_{i,j=1}^d \left(\partial^2_{i,j}f(x)\right)^2.
\end{eqnarray*}
Substituting this equality in the previous equations, we get
\begin{eqnarray*}
A&=& -\int_{\mathbb{T}^d} \left(\Delta f(x)\right)^2 |f(x)| \frac{dx}{\eta_f(x)^3}-\int_{\mathbb{T}^d} |f(x)| \Delta f(x) \nabla f(x) \cdot \nabla \left(\frac{1}{\eta_f(x)^3}\right) dx\\
& & +\int_{\mathbb{T}^d}|f(x)| \sum_{i,j=1}^d \left(\partial^2_{i,j} f(x)\right)^2  \frac{dx}{\eta_f(x)^3}+     \frac{1}{2} \int_{\mathbb{T}^d} |f(x)| \nabla \left(\|\nabla f(x)\|^2\right)\cdot \nabla \left(\frac{1}{\eta_f^3 (x)}\right) dx\\
&=& \int_{\mathbb{T}^d} |f(x)| \left(\|\text{Hess}_x (f)\|^2-\text{Tr}\left(\text{Hess}_x(f)\right)^2\right)\frac{dx}{\eta_f(x)^3}\\
& & + \frac{3}{2}\int_{\mathbb{T}^d}\frac{|f(x)|}{\eta_f^5(x)}\sum_{i,j=1}^d \partial^2_{i,i} f(x) \partial_j f(x) \partial_j \left(\eta_f^2(x)\right) dx\\
& & - \frac{3}{2} \int_{\mathbb{T}^d}\frac{|f(x)|}{\eta_f^5(x)}\sum_{i,j=1}^d \partial^2_{i,j} f(x) \partial_i f(x) \partial_j\left(\eta_f^2(x)\right) dx\\
&=& \int_{\mathbb{T}^d} |f(x)| \left(\|\text{Hess}_x (f)\|^2-\text{Tr}\left(\text{Hess}_x(f)\right)^2\right)\frac{dx}{\eta_f(x)^3}\\
& &+\frac{3}{2} \int_{\mathbb{T}^d}\frac{|f(x)|}{\eta_f^5(x)} \left(\Delta f(x) \langle \nabla f(x), \nabla \eta_f^2(x)\rangle -\nabla f(x)^* \mathrm{Hess}_x f \nabla \eta_f^2(x)\right) dx.
\end{eqnarray*}
As a result, under the assumptions of $f\in C^3(\mathbb{T}^d,\mathbb R)$ we have established the desired formula
\[
\begin{array}{ll}
\mathcal H^{d-1}\left(\{f=0\}\right) & =\displaystyle{ - \frac{1}{2} \int_{\mathbb T^d}  \left( f(x) \Delta f(x)-  ||\nabla f(x)||^2 \right) \frac{|f(x)|}{\eta_f(x)^3} dx}\\
\\
&  \displaystyle{+\int_{\mathbb{T}^d} |f(x)| \left(\|\text{Hess}_x (f)\|^2-\text{Tr}\left(\text{Hess}_x(f)\right)^2\right)\frac{dx}{\eta_f(x)^3}}\\
\\
& \displaystyle{+\frac{3}{2} \int_{\mathbb{T}^d}\frac{|f(x)|}{\eta_f^5(x)}\sum_{i,j=1}^d \left(\partial^2_{i,i} f(x) \partial_j f(x)-\partial^2_{i,j} f(x) \partial_i f(x)\right)\partial_j\left(\eta_f^2(x)\right) dx.}
\end{array}
\]
Now, assume that $f \in C^2(\mathbb{T}^d,\mathbb R)$ is non-degenerated. One may approximate $f$ in the space $C^2(\mathbb{T}^d,\mathbb R)$ equipped with the norm $N(f)=\sup_{x\in\mathbb{T}^d} \left(|f(x)|+\|\nabla f(x)\|+\|\text{Hess}_x f\|\right)$ by a sequence $(g_n)_{n\ge 1}$ of functions belonging to $C^3(\mathbb{T}^d,\mathbb R)$. Since the right hand side of the the last equation does not have any term with a derivative of $f$ of order $3$, one may pass to the limit under the integral. Besides, since $f$ is non-degenerated, the nodal volume being a continuous functional for the weaker $C^1$ topology as established in \cite{angst2018universality}, one also has $\mathcal H^{d-1}\left(\{g_n=0\}\right)\to\mathcal H^{d-1}\left(\{f=0\}\right)$ as $n$ goes to infinity. Hence, we get that the last formula indeed holds for $f\in C^2(\mathbb{T}^d,\mathbb R)$. 
\end{proof}
 
 \newpage
\section{Almost sure results via Birkhoff ergodic Theorem}\label{birk-section}
 In this section we shall exploit the formula (\ref{crazy-formula}) in order to give almost sure statements regarding the number of roots on increasing domains. Let us first recall the Maruyama Theorem \cite[p. 76]{MR0448523}, which asserts that a stationary Gaussian process is ergodic if and only if its spectral measure has no atom. Associated with the well-known Birkhoff ergodic theorem, we have then the following theorem.
 
 \begin{thm}\label{Maruyama}
Let $(Y(x))_{x \in \R}$ be a $\R^d$-valued stationary Gaussian process whose spectral measure has no atom, and let $\Phi$ be a measurable function on $\R^d$ such that $\E\left(|\Phi(Y(0)|\right)<\infty$. Then one has,
 \begin{equation}\label{Ergo-Maruyama}
 \frac{1}{M}\int_{0}^{M}\Phi(Y(x))dx\xrightarrow[M\to\infty]{a.s.\,\text{and}\,\,L^1}~\E\left(\Phi(Y(0)\right).
 \end{equation}
 \end{thm}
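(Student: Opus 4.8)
The plan is to recognize this statement as the continuous-time Birkhoff--Khinchin ergodic theorem applied to the measure-preserving flow naturally attached to the stationary process $(Y(x))_{x\in\R}$, whose ergodicity is supplied by the cited Maruyama criterion. In other words, the whole content is to identify the spatial average in \eqref{Ergo-Maruyama} as a time average of a fixed $L^1$ observable along an ergodic flow, and then quote the two theorems that make the conclusion immediate.

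First I would set up the dynamical system. Working on the underlying probability space $(\Omega,\mathcal F,\mathbb P)$, stationarity of $(Y(x))_{x\in\R}$ provides a one-parameter group $(\theta_t)_{t\in\R}$ of measure-preserving transformations satisfying $Y(x+t,\omega)=Y(x,\theta_t\omega)$ for all $x,t$; equivalently, one may transport everything to the path space $C(\R,\R^d)$ endowed with the law of $Y$ and the shift flow. The relevant observable is then $g(\omega):=\Phi(Y(0,\omega))$, which by hypothesis lies in $L^1(\Omega)$ since $\E(|\Phi(Y(0))|)<\infty$. The defining relation of the flow gives $g(\theta_x\omega)=\Phi(Y(x,\omega))$, so that the left-hand side of \eqref{Ergo-Maruyama} is exactly the time average $\frac{1}{M}\int_0^M g(\theta_x\omega)\,dx$ of $g$ along the flow.

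The heart of the argument is then two-fold. On the one hand, Maruyama's theorem asserts that, the spectral measure of $Y$ having no atom, the flow $(\theta_t)$ is ergodic, i.e. its invariant $\sigma$-algebra $\mathcal I$ is $\mathbb P$-trivial. On the other hand, the continuous-parameter (Wiener) version of the Birkhoff--Khinchin ergodic theorem applies to any $g\in L^1$ and yields the convergence $\frac{1}{M}\int_0^M g(\theta_x\cdot)\,dx\to\E(g\,|\,\mathcal I)$, both almost surely and in $L^1$. Combining the two, the conditional expectation collapses to the constant $\E(g)=\E(\Phi(Y(0)))$, which is precisely the desired conclusion \eqref{Ergo-Maruyama}.

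The only point requiring genuine care, and the main (though mild) obstacle, is measurability: one must ensure that $(x,\omega)\mapsto\Phi(Y(x,\omega))$ is jointly measurable, so that $\int_0^M\Phi(Y(x))\,dx$ defines a bona fide random variable and the flow $(\theta_t)$ is measurable as required by the ergodic theorem. In the present framework this is automatic, since the Gaussian fields under consideration are at least continuous in $x$; hence $x\mapsto Y(x,\omega)$ is continuous and, $\Phi$ being Borel, joint measurability follows at once. With this verification in hand, Maruyama's criterion and the continuous-time ergodic theorem assemble directly into the proof.
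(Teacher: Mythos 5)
Your proposal is correct and follows exactly the route the paper takes: the paper states this theorem precisely as the combination of Maruyama's ergodicity criterion (spectral measure without atoms implies ergodicity of the shift flow) with the continuous-time Birkhoff ergodic theorem applied to the observable $\Phi(Y(0))\in L^1$. Your additional remark on joint measurability via continuity of the sample paths is a sensible (and correct) detail that the paper leaves implicit.
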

 
 The next Theorem combines the fomula (\ref{crazy-formula}) with the previous statement, by considering $Y(x)=(f(x),f'(x),f''(x))$ and $\Phi(x,y,z)=\frac{x z -y^2}{x^2+y^2}.$ Since $f$ is a stationary Gaussian process, $f(x)$ is independent of $f'(x)$ and $\frac{1}{\sqrt{f^2(x)+f'(x)^2}}\in L^{p}(\mathbb{P})$ for every $p<2$. Since $|f(x)|<\sqrt{f^2(x)+f'(x)^2}$ and $f'(x)^2<f^2(x)+f'^2(x)$ we recover that
 
$$\frac{f'(x)^2-f(x)f''(x)}{f^2(x)+f'(x)^2}\in L^1(\mathbb{P}).$$

 \begin{thm}
Let $f$ be a real-valued stationary Gaussian process of class $\mathcal{C}^2$ and non-degenerated. We suppose that the spectral measure of $f$ has no atom. Then almost surely and in $\mathbb L^1$, as $T$ goes to infinity, we have
\[
\lim_{T\to +\infty} \frac{1}{T}\times \mathcal H^0\left(\{f=0\} \cap [0, T]\right) =\frac{\sqrt{\mathbb E[f'^2(0)]}}{\pi}.
\]
\end{thm}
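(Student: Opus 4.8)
The plan is to combine the closed Kac--Rice formula of Proposition~\ref{gene-formula} with the ergodic Theorem~\ref{Maruyama}. First I would apply Proposition~\ref{gene-formula} with the counting function $F=\frac{2}{\pi}\arctan$ on the interval $[0,T]$. Since $f$ is non-degenerated almost surely and $f(T)$ has a density, one has $f(0)f(T)\neq 0$ for almost every $T$, so the formula reads
\[
\mathcal H^0\left(\{f=0\}\cap[0,T]\right)=\frac{1}{\pi}\left[\arctan\frac{f'(T)}{f(T)}-\arctan\frac{f'(0)}{f(0)}\right]+\frac{1}{\pi}\int_0^T\frac{f'(x)^2-f(x)f''(x)}{f^2(x)+f'(x)^2}\,dx.
\]
Dividing by $T$, the bracketed boundary term is bounded by $\pi$ in absolute value, hence contributes a deterministic $O(1/T)$ that vanishes both almost surely and in $L^1$. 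The problem thus reduces to the time average $\frac{1}{\pi T}\int_0^T\Phi(Y(x))\,dx$, where $Y(x)=(f(x),f'(x),f''(x))$ and $\Phi(u,v,w)=\frac{v^2-uw}{u^2+v^2}$.

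Next I would invoke Theorem~\ref{Maruyama} for the $\R^3$-valued stationary Gaussian process $Y$. Its matrix spectral measure is absolutely continuous with respect to the scalar spectral measure $\mu_f$ of $f$, each entry being of the form $\omega^k\,d\mu_f(\omega)$ with $0\le k\le 4$; hence $Y$ has an atomless spectral measure precisely because $\mu_f$ does, and $Y$ is therefore ergodic. The integrability hypothesis $\E\left(|\Phi(Y(0))|\right)<\infty$ is exactly the point already isolated before the statement: non-degeneracy gives $1/\sqrt{f^2+f'^2}\in L^p$ for every $p<2$, while $f''(0)$, being Gaussian, lies in every $L^q$, so a H\"older estimate yields $\frac{|f(0)f''(0)|}{f^2(0)+f'^2(0)}\le\frac{|f''(0)|}{\sqrt{f^2(0)+f'^2(0)}}\in L^1$ and therefore $|\Phi(Y(0))|\le 1+\frac{|f''(0)|}{\sqrt{f^2(0)+f'^2(0)}}\in L^1$. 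Theorem~\ref{Maruyama} then yields, almost surely and in $L^1$,
\[
\frac{1}{T}\int_0^T\Phi(Y(x))\,dx\xrightarrow[T\to\infty]{}\E\left(\Phi(Y(0))\right).
\]

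It then remains to evaluate $\E\left(\Phi(Y(0))\right)$. Writing $\lambda_0=\E[f(0)^2]$ and $\lambda_2=\E[f'(0)^2]$, stationarity gives $\E[f(0)f'(0)]=\E[f'(0)f''(0)]=0$ and $\E[f(0)f''(0)]=-\lambda_2$, so the Gaussian regression of $f''(0)$ on $(f(0),f'(0))$ is $\E[f''(0)\mid f(0),f'(0)]=-\frac{\lambda_2}{\lambda_0}f(0)$. Conditioning accordingly and setting $f(0)=\sqrt{\lambda_0}\,X$, $f'(0)=\sqrt{\lambda_2}\,Y$ with $X,Y$ independent standard Gaussians, I would obtain
\[
\E\left(\Phi(Y(0))\right)=\lambda_2\,\E\left[\frac{X^2+Y^2}{\lambda_0 X^2+\lambda_2 Y^2}\right]=\frac{\lambda_2}{2\pi}\int_0^{2\pi}\frac{d\theta}{\lambda_0\cos^2\theta+\lambda_2\sin^2\theta}=\sqrt{\frac{\lambda_2}{\lambda_0}},
\]
the middle equality coming from polar coordinates. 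Dividing by $\pi$ gives the limit $\frac{1}{\pi}\sqrt{\lambda_2/\lambda_0}$, which is the announced $\frac{\sqrt{\E[f'^2(0)]}}{\pi}$ under the normalization $\lambda_0=1$; as a sanity check, taking expectations in the closed formula (the boundary terms cancelling by stationarity) recovers the classical Rice formula $\E\left(\mathcal H^0(\{f=0\}\cap[0,T])\right)=\frac{T}{\pi}\sqrt{\lambda_2/\lambda_0}$.

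The main obstacle is the ergodic averaging of the singular integrand $\Phi(Y(\cdot))$: one must control the factor $1/(f^2+f'^2)$ near the probability-zero event $\{f=f'=0\}$, which is exactly where non-degeneracy and the $L^p$, $p<2$, bound are used to secure the integrability needed to apply Theorem~\ref{Maruyama}. A secondary, minor point is that the closed formula holds only for those $T$ with $f(T)\neq 0$; since $T\mapsto\mathcal H^0(\{f=0\}\cap[0,T])$ is nondecreasing with unit jumps, the discrete exceptional set is harmless after division by $T$, so neither the almost sure nor the $L^1$ limit is affected.
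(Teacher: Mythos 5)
Your proposal is correct and follows the paper's skeleton exactly: the closed formula of Proposition \ref{gene-formula} with $F=\tfrac{2}{\pi}\arctan$, the boundedness of the boundary terms, and the Maruyama--Birkhoff theorem applied to $Y=(f,f',f'')$ with the integrability secured by $1/\sqrt{f^2+f'^2}\in L^p$ for $p<2$ and H\"older against $f''\in\bigcap_q L^q$. The one step where you genuinely diverge is the evaluation of $\E[\Phi(Y(0))]$: the paper exploits the fact that the ergodic convergence also holds in $L^1$, so the limit must coincide with $\pi\,\E[\mathcal H^0(\{f=0\}\cap[0,1])]$, which is then read off from the classical Kac--Rice expectation formula; you instead compute the expectation directly by Gaussian regression of $f''(0)$ on $(f(0),f'(0))$ (using $\E[f(0)f''(0)]=-\lambda_2$, $\E[f'(0)f''(0)]=0$) followed by a polar-coordinate integral, obtaining $\sqrt{\lambda_2/\lambda_0}$. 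Both computations are valid; the paper's is shorter and bypasses any explicit Gaussian integral, while yours is self-contained, makes the normalization $\lambda_0=\E[f(0)^2]=1$ (implicit in the paper, where the spectral measure is a probability measure) visible, and doubles as an independent verification of the Rice constant. Your remark on the exceptional set $\{T: f(T)=0\}$ and the monotonicity of the counting function is a point the paper glosses over and is handled correctly.
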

\begin{proof}
Using Proposition 2, the number of zeros in $[0,T]$ can be expressed as 
\[
\mathcal H^0\left(\{f=0\} \cap [0, T]\right) =\frac{1}{\pi} \left[ \arctan \frac{f'(T)}{f(T)}-\arctan \frac{f'(0)}{f(0)} +   \int_0^T \frac{f'^2(x)-f(x)f''(x)}{f^2(x)+f'^2(x)}dx\right].
\]
Hence, the statement of the theorem is equivalent to the following almost sure and $\mathbb L^1-$convergence
\[
\lim_{T\to +\infty} \frac{1}{T} \int_0^T \frac{f'^2(x)-f(x)f''(x)}{f^2(x)+f'^2(x)}dx = \sqrt{\mathbb E[f'^2(0)]}.
\]
Denoting by $\mu$ the spectral measure associated with $f$, the (matrix-valued) spectral measure associated with the triplet $(f,f',f'')$ has a density with respect to $\mu$, hence no atoms.

\medskip

\medskip

Using Theorem \ref{Maruyama}, we get that, almost surely and in $\mathbb L^1$
\[
\lim_{T\to +\infty} \frac{1}{T} \int_0^T \frac{f'^2(x)-f(x)f''(x)}{f^2(x)+f'^2(x)}dx=  \mathbb E\left[\frac{f'^2(0)-f(0)f''(0)}{f^2(0)+f'^2(0)}\right].
\]
Now, remembering that the convergence takes place in $\mathbb L^1$, we have in fact
\[
\begin{array}{ll}
\displaystyle{\mathbb E\left[\frac{f'^2(0)-f(0)f''(0)}{f^2(0)+f'^2(0)}\right]} & \displaystyle{= \lim_{T \to +\infty}  \frac{\pi}{T} \times \mathbb E\left[ \mathcal H^0\left(\{f=0\} \cap [0, T]\right)\right]} \\
\\
& \displaystyle{=  \pi \times \mathbb E\left[ \mathcal H^0\left(\{f=0\} \cap [0, 1]\right) \right]}.
\end{array}
\]
By the classical Kac--Rice formula, we have then
\[
\mathbb E\left[ \mathcal H^0\left(\{f=0\} \cap [0, 1]\right) \right] = \frac{1}{\pi} \int_0^1 \sqrt{\frac{\mathbb E[f'^2(x)]}{\mathbb E[f^2(x)]}- \left(\frac{\mathbb E[f(x)f'(x)]}{\mathbb E[f^2(x)]}\right)^2}dx.
\]
By stationarity, the integrand is constant equal to $\sqrt{\mathbb E[f'^2(0)]}$, so that 
\[
\pi \times \mathbb E\left[ \mathcal H^0\left(\{f=0\} \cap [0, 1]\right) \right] = \sqrt{\mathbb E[f'^2(0)]},
\]
hence the result.
\end{proof}

We illustrate graphically this fact for the Gaussian process whose covariance function is given by the sinus cardinal.

\begin{figure}[ht]

\begin{center}

\includegraphics[scale=0.4]{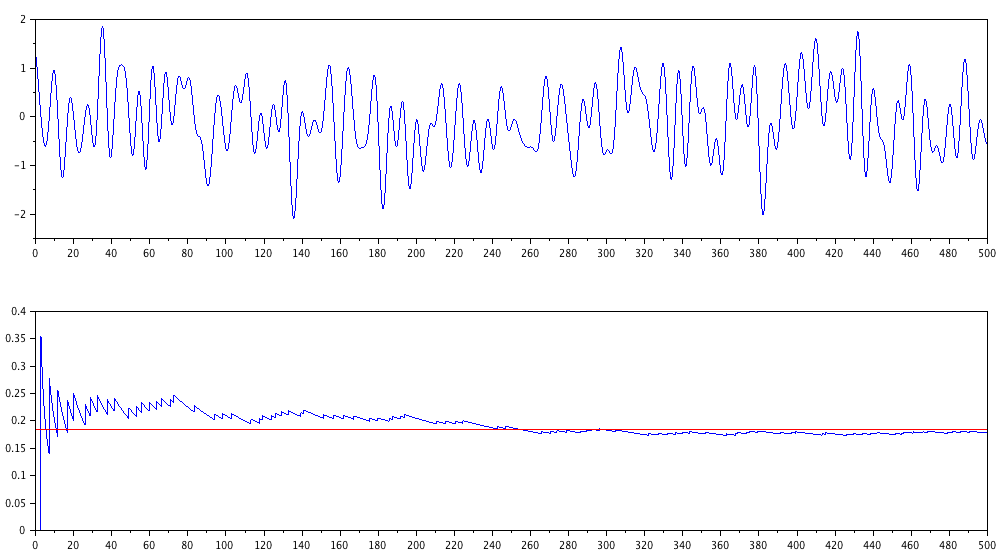}

\end{center}

\caption{A sample path of an approximation of the sine cardinal process $f$ obtained via the spectral representation method \cite{shinozuka91} and the corresponding renormalized zero counting function $t \mapsto t^{-1} \mathcal H^{0}\left( \{f =0\} \cap [0,t]\right)$. The red horizontal line corresponds to the theoretical limit $1/\pi \sqrt{3}\approx 0.184$.}

\end{figure}

\newpage 
\section{On the absolute continuity of the nodal volume}

In this last Section, we give the proof of our main absolute continuity results, namely Theorems \ref{Main theorem-periodic} and \ref{Mainpasperio} stated in the introduction. In order to keep the article as self contained as possible, we first recall in the next subsection the basics of Malliavin calculus and Bouleau--Hirsch criterion .

\subsection{A quick and self-contained introduction to Malliavin calculus}\label{Malliavin-introduction}
The purpose of this section is to introduce in a self-contained way the necessary material from Malliavin calculus theory that we will use in the sequel. All results presented here are classical, we nevertheless chose a particular \textit{gradient} called the sharp operator which has been previously introduced by N. Bouleau, see \cite[page 135]{bouleau2010dirichlet} or \cite[page 80]{bouleau2003error}. This choice of gradient is sometimes convenient since it preserves in some sense the \textit{Gaussian structure} of the objects with opposition with the standard choice of the literature to introduce an auxiliary Hilbert space of the form $L^2([0,T])$. Among others, in our case, we take benefit from this specific gradient in the proof of the proposition \ref{Integral is in the domain}, because the gradient of the underlying Gaussian field is itself continuous ensuring the convergence of Riemann sums.

\subsubsection{The sharp operator and its domain} Let us give $(X_i)_{i\ge 1}$ an i.i.d. sequence of standard Gaussian random variables and the underlying probability space $(\Omega,\mathcal{F},\mathbb{P})$. Without loss of generality, we shall assume that $\mathcal{F}=\sigma(X_i;i\ge 1)$. We will also need a copy $(\hat{\Omega},\hat{\mathcal{F}},\hat{\mathbb{P}})$ of this probability space as well as $(\hat{X}_i)_{i\ge 1}$ a corresponding i.i.d. sequence of standard Gaussian such that $\hat{\mathcal{F}}=\sigma(\hat{X}_i;i\ge 1)$.  For any $m\ge 1$ and any $F\in\mathcal{C}^1_{\text{Pol}}(\R^m,\R)$, the set of functions of $\mathcal{C}^1(\R^m,\R)$ whose gradient has a polynomial growth, one may define the following \textit{sharp operator}:
\begin{equation}\label{sharp operator}
\s F(X_1,\cdots,X_m)
:=\sum_{i=1}^m \partial_i F(X_1,\cdots,X_m) \hat{X_i}.
\end{equation}
Notice that the previous expression is in $\bigcap_{p\ge 1} L^p(\Omega,L^2(\hat{\mathbb{P}}))$ thanks to the fact that $\nabla F$ has a polynomial growth and that the Gaussian distribution has moments of any order. Let us introduce now,
\begin{equation}
\mathcal{P}:=\left\{ F(X_1,\cdots,X_m) \left|\right. F \in \mathcal{C}^1_{\text{Pol}}(\R^m,\mathbb{R}); m\ge 1\right\}.
\end{equation}
For any $F:=F(X_1,\cdots,X_m)\in\mathcal{P}$ we define the so-called \textit{Malliavin norm}
\begin{equation}\label{Malliavin-norm}
\|F\|_{1,p}:=\left(\E\left(|F|^p\right)+\E \left(\hat{\E}\left((\s F)^2\right)^{\frac{p}{2}}\right)\right)^{\frac{1}{p}},
\end{equation}
where $\E$ represents the expectation with respect to $\mathbb{P}$ and $\hat{\mathbb{E}}$ denotes the expectation for $\hat{\mathbb{P}}$. Again, the fact that the gradient of $F$ has a polynomial growth (hence so does $F$) implies that the quantity $\|F\|_{1,p}$ is well defined. We are now in position to extend the domain of the sharp operator to the set
\begin{equation}\label{DomainSharp}
\mathbb{D}_{1,p}:= \left\{ X \in L^p(\mathbb{P})\Big{|} \exists (F_n)_{n\ge 1}\in\mathcal{P}^{\mathbb{N}}, \,F_n\xrightarrow[n\to\infty]{L^p}~X\,\text{and}\,(F_n)_{n\ge 1}\,\text{is Cauchy for}\,\|\cdot\|_{1,p}\right\},
\end{equation}
which is the completion of the vector space $\mathcal{P}$ with respect to the norm $\|\cdot\|_{1,p}$. Now for any $X\in\mathbb{D}_{1,p}$ it remains to say what is $\s X$. Since $X\in\mathbb{D}_{1,p}$, one can find $(F_n)_{n\ge 1} \in \mathcal{P}^{\mathbb{N}}$ which converges toward $X$ and is a Cauchy sequence for the norm $\|\cdot\|_{1,p}$. In particular we get

$$\E \left(\hat{\E}\left((\s F_n-\s F_m)^2\right)^{\frac{p}{2}}\right)\xrightarrow[n,m\to\infty]~0.$$

Besides, in virtue of the definition \ref{sharp operator}, conditionally to the sigma-field $\mathcal{F}$, the random variables $\s F_n-\s F_m$ are Gaussian. Since all $L^p$ norms are equivalent for Gaussian random variables, the latter condition turns out to be equivalent to

$$\E \left(\hat{\E}\left(|\s F_n-\s F_m|^p\right)\right)\xrightarrow[n,m\to\infty]~0.$$

Hence, $\s F_n-\s F_m$ is a Cauchy sequence in $L^p(\mathbb{P}\otimes \hat{\mathbb{P}})$ which is a Banach space. The limit, denoted by $\s X \in L^p(\mathbb{P}\otimes \hat{\mathbb{P}})$, defines the sharp operator of the random variable $X$. Since $\E\left(\hat{\E}\left( |\s F_n- \s X|^p\right)\right)\to 0$, one can extract a subsequence such that $\mathbb{P}\text{-a.s.}$, $\hat{\E}\left( |\s F_n- \s X|^p\right)\to 0$. Since $\s F_n$ is Gaussian conditionally to the sigma field $\mathcal{F}$, the same property holds for $\s X$. Relying again on the equivalence of $L^p$ norms for Gaussian distribution, one also gets that $\s X \in L^p(\Omega, L^2 (\hat{\mathbb{P}}))$ as well as the two useful facts:
\begin{equation}\label{defi-gradient}
\begin{array}{ll}
&\s F_n \xrightarrow[n\to\infty]{L^p(\mathbb{P}\otimes \hat{\mathbb{P}})} \s X,\\
&\s F_n \xrightarrow[n\to\infty]{L^p(\Omega,L^2(\hat{\mathbb{P}}))} \s X.\\
\end{array}
\end{equation}

\subsubsection{Closability of the sharp operator} We must then show that this definition is unambiguous. Concretely, we must prove that $\s X$ does not depend on the chosen Cauchy sequence (for the norm $\|\cdot\|_{1,p}$) approximating $X$. This procedure consists in proving that the sharp operator is \textit{closable}, namely
$$
\left.
\begin{array}{ll}
&F_n \xrightarrow[n\to\infty]{L^p(\mathbb{P})}~0\\
&\exists Z \in L^p(\Omega,L^2(\hat{\mathbb{P}}))\,\text{s.t.}\,\E \left(\hat{\E}\left((\s F_n-Z)^2\right)^{\frac{p}{2}}\right)\to 0
\end{array}
\right\}
\Rightarrow
Z=0
$$

To do so we take $\theta: \mathbb{R}^m\to \R$ which belongs to the $\text{Schwartz class}$ and $\chi_M$ a smooth and compactly supported approximation of $\textbf{1}_{[-M,M]}$. We have the following integration by parts formula

\begin{equation}\label{IPP-closable}
\begin{array}{ccc}
\E\left(\theta(X_1,\cdots,X_m) \chi_M(X_i)\hat{\E} \left(\s F_n \hat{X_i}\right)\right)&=&\E\left(\theta(X_1,\cdots,X_m) \chi_M(X_i) X_i F_n \right)\\
&-&\E\left(F_n \hat{\E} \left( \s\left[\theta(X_1,\cdots,X_m) \chi_M(X_i)\right] \hat{X}_i\right)\right)
\end{array}
\end{equation}
By construction, $\chi_M(X_i) X_i$ is bounded, $\theta(X_1,\cdots,X_m)$ is also bounded hence we have

$$\E\left(\theta(X_1,\cdots,X_m) \chi_M(X_i) X_i F_n \right)\to 0.$$

Moreover, we have by the standard chain rule,

\begin{eqnarray*}
\s\left[\theta(X_1,\cdots,X_m) \chi_M(X_i)\right]&=&\s \left[\theta(X_1,\cdots,X_m)\right]\chi_M(X_i)\\
&+&\s \left[\chi_M(X_i)\right]\theta(X_1,\cdots,X_m)\\
&=&\sum_{j=1}^m \partial_j \theta(X_1,\cdots,X_m)  \hat{X}_j \chi_M(X_i)\\
&+&\theta(X_1,\cdots,X_m) \chi_M'(X_i) \hat{X}_i.
\end{eqnarray*}
Hence we get,
\begin{eqnarray*}
&&\E\left(F_n \hat{\E} \left( \s\left[\theta(X_1,\cdots,X_m) \chi_M(X_i)\right] \hat{X}_i\right)\right)\\
&=& \E\left(F_n \partial_i \theta(X_1,\cdots,X_m) \chi_M(X_i)\right)\\
&+& \E\left(F_n \theta (X_1,\cdots,X_m) \chi_M'(X_i) \right).
\end{eqnarray*}

Similarly, since $\|\theta\|_\infty+\|\nabla \theta \|_\infty +\|\chi_M\|_\infty+\|\chi_M'\|_\infty<\infty$ and $F_n\to 0$ in $L^p(\mathbb{P})$ we get

$$
\E\left(F_n \hat{\E} \left( \s\left[\theta(X_1,\cdots,X_m) \chi_M(X_i)\right] \hat{X}_i\right)\right)\to 0.
$$
We now treat the left hand side of the equation (\ref{IPP-closable}), we have

\begin{eqnarray*}
&&\left|\E\left(\theta(X_1,\cdots,X_m) \chi_M(X_i)\hat{\E} \left(\s F_n \hat{X_i}\right)\right)-\E\left(\theta(X_1,\cdots,X_m) \chi_M(X_i)\hat{\E} \left(Z \hat{X_i}\right)\right)\right|\\
&\le& \E\left(\left|\theta(X_1,\cdots,X_m) \chi_M(X_i)\right|\hat{\E} \left(\left|\s F_n-Z\right|\left| \hat{X_i}\right|\right)\right)\\
&\stackrel{\text{C.S.}}{\le}&\E\left(\left|\theta(X_1,\cdots,X_m) \chi_M(X_i)\right|\sqrt{\hat{\E} \left(\left(\s F_n-Z\right)^2\right)}\right)
\end{eqnarray*}

which tends to zero since $\theta(\cdot) \chi_M(\cdot)$ is bounded and since $\sqrt{\hat{\E} \left(\left(\s F_n-Z\right)^2\right)}$ tends to zero in $L^p$ by assumption and thus in $L^1$. Gathering all theses facts and passing to the limit in the equation (\ref{IPP-closable}) entails that

$$\E\left(\theta(X_1,\cdots,X_m) \chi_M(X_i)\hat{\E} \left(Z\hat{X_i}\right)\right)=0.$$

Letting first $M\to \infty$ and using dominated convergence (since $0\le \chi_M\le 1$) entails that for any $\theta \in \mathcal{S}(\R^m,\R)$ we have

$$\E\left(\theta(X_1,\cdots,X_m)\hat{\E} \left(Z\hat{X_i}\right)\right)=0.$$

Taking the conditional expectation with respect to $\mathcal{F}_m:=\sigma(X_1,\cdots,X_m)$ yields to

$$\E\left(\theta(X_1,\cdots,X_m)\E\left[\hat{\E} \left(Z\hat{X_i}\right)\left|\right.\mathcal{F}_m\right]\right)=0,$$

which by density arguments yields to 

$$\mathbb{P}\text{-a.s.},~~\E\left[\hat{\E} \left(Z\hat{X_i}\right)\left|\right.\mathcal{F}_m\right]=0.$$

Letting $m\to\infty$ and reminding that $\mathcal{F}=\sigma(X_i;i\ge 1)$ implies that

$$\mathbb{P}\text{-a.s.},~~\hat{\E} \left(Z\hat{X_i}\right)=0.$$

The latter being valid for any $i\ge 1$ we deduce that

$$\forall n\ge 1,\,\mathbb{P}\text{-a.s.},~~\hat{\E} \left(Z ~\s F_n\right)=0.$$

But $\s F_n $ tends to $Z$ in $L^p(\Omega,L^2(\hat{\mathbb{P}}))$, hence up to extracting a subsequence we may assume that $\mathbb{P}\text{-a.s.}$, $\hat{\E}\left( (Z-\s F_n)^2\right)\to 0$. On the other hand, using Cauchy Schwarz inequality we also have

$$\left|\hat{\E}\left(Z^2-Z~\s F_n \right)\right|\le \sqrt{\hat{\E}\left(Z^2\right)}\sqrt{\hat{\E}\left((Z-\s F_n)^2\right)} \to 0.$$
Finally, we obtain that $\mathbb{P}\text{-a.s.}$ we have $\hat{\E}(Z^2)=0$ and finally the desired conclusion that $\mathbb{P}\otimes\hat{\mathbb{P}}\text{-a.s.},\,Z=0.$

\begin{rem}
Based on the definition \ref{sharp operator}, one has the following formal interpretation of $\s$. Take $\Phi:=\Phi(X_1,\cdots,X_i,\cdots)$ some functional of the sequence $(X_i)_{i\ge 1}$ which belongs to $\mathbb{D}^{1,p}$ for some $p\ge 1$. Then one has formally:
\begin{equation}\label{Meaningofthederivative}
\s \Phi=\lim_{\epsilon\to 0} \frac{\Phi\left(X_1+\epsilon \hat{X}_1,\cdots,X_i+\epsilon \hat{X}_i,\cdots\right)-\Phi(X_1,\cdots,X_i,\cdots)}{\epsilon}.
\end{equation}
In some sense, the sharp operator represents the directional derivative along an independent copy of the input.
\end{rem}

\paragraph{Main properties of the sharp operator:} In this paragraph we gather the main properties of the sharp operator that we will use in our proof of the existence of densities for the nodal volumes. The following property may be seen as an analogue of the usual chain rule for the sharp operator $\s$.

\begin{prop}\label{Chain-Rule-Proposition}
Let $(X_1,\cdots,X_m)$ in the domain $\mathbb{D}^{1,p}$ for some $p\ge 1$. Let $\Psi\in\mathcal{C}_b^1(\R^m,\R)$, that is to say $\Psi$ is continuously differentiable with a bounded gradient. Then, $\Psi(X_1,\cdots,X_m)\in\mathbb{D}^{1,p}$ and we have
\begin{equation}\label{Chain-Rule}
\s \Psi(X_1,\cdots,X_m)=\sum_{i=1}^m \partial_i \Psi(X_1,\cdots,X_m) \s X_i.
\end{equation}
\end{prop}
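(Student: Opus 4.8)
The plan is to prove the identity first for smooth cylindrical functionals, where $\s$ is given literally by the formula \eqref{sharp operator}, and then to transport it to general elements of $\mathbb{D}^{1,p}$ through the very approximation procedure that defines the domain. Accordingly, I would first assume that each $X_j$ belongs to $\mathcal{P}$; choosing a common underlying index, I may write all of them as $\mathcal{C}^1_{\text{Pol}}$ functions of the same finite family $(X_1,\dots,X_\ell)$. Since $\Psi\in\mathcal{C}^1_b(\R^m,\R)$ has a bounded gradient, the composition $\Psi(X_1,\dots,X_m)$ is again a $\mathcal{C}^1$ function of $(X_1,\dots,X_\ell)$ whose gradient, being a bounded factor times gradients of polynomial growth, still has polynomial growth; hence $\Psi(X_1,\dots,X_m)\in\mathcal{P}$. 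Applying \eqref{sharp operator} and the ordinary chain rule for partial derivatives then yields \eqref{Chain-Rule} in this cylindrical case directly.

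For general $X_1,\dots,X_m\in\mathbb{D}^{1,p}$, I would fix approximating sequences $(F_{j,n})_{n\ge1}\subset\mathcal{P}$ with $F_{j,n}\to X_j$ in $L^p(\mathbb{P})$ and $\s F_{j,n}\to\s X_j$ in $L^p(\mathbb{P}\otimes\hat{\mathbb{P}})$, as furnished by \eqref{defi-gradient}. Writing $\mathbf{F}_n:=(F_{1,n},\dots,F_{m,n})$ and $\mathbf{X}:=(X_1,\dots,X_m)$, the Lipschitz bound coming from the boundedness of $\nabla\Psi$ gives $|\Psi(\mathbf{F}_n)-\Psi(\mathbf{X})|\le\|\nabla\Psi\|_\infty\sum_j|F_{j,n}-X_j|$, whence $\Psi(\mathbf{F}_n)\to\Psi(\mathbf{X})$ in $L^p(\mathbb{P})$. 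By the first step, $\s\Psi(\mathbf{F}_n)=\sum_j(\partial_j\Psi)(\mathbf{F}_n)\,\s F_{j,n}$, and I would prove that this converges to $\sum_j(\partial_j\Psi)(\mathbf{X})\,\s X_j$ in $L^p(\mathbb{P}\otimes\hat{\mathbb{P}})$ by decomposing each term as
\[
\bigl[(\partial_j\Psi)(\mathbf{F}_n)-(\partial_j\Psi)(\mathbf{X})\bigr]\,\s F_{j,n}+(\partial_j\Psi)(\mathbf{X})\,\bigl[\s F_{j,n}-\s X_j\bigr].
\]
The second summand vanishes in $L^p$ because $\partial_j\Psi$ is bounded while $\s F_{j,n}\to\s X_j$ there.

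The main obstacle is the first summand, since $\partial_j\Psi$ is only continuous (not Lipschitz) and the factor $\s F_{j,n}$ is not uniformly bounded. Here I would again exploit that $\nabla\Psi$ is bounded: setting $a_n:=(\partial_j\Psi)(\mathbf{F}_n)-(\partial_j\Psi)(\mathbf{X})$, one has $|a_n|\le2\|\nabla\Psi\|_\infty$ and, by continuity of $\partial_j\Psi$ together with $\mathbf{F}_n\to\mathbf{X}$ in probability, $a_n\to0$ in $\mathbb{P}$-probability. Splitting $a_n\,\s F_{j,n}=a_n(\s F_{j,n}-\s X_j)+a_n\,\s X_j$, the first part is bounded in $L^p$ by $2\|\nabla\Psi\|_\infty\,\|\s F_{j,n}-\s X_j\|_p\to0$, while for the second the domination $|a_n\,\s X_j|^p\le(2\|\nabla\Psi\|_\infty)^p|\s X_j|^p\in L^1$ lets me invoke Vitali's theorem (equivalently, dominated convergence along an almost surely convergent subsequence, then a subsequence argument) to conclude $a_n\,\s X_j\to0$ in $L^p(\mathbb{P}\otimes\hat{\mathbb{P}})$. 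Combining these facts, $\s\Psi(\mathbf{F}_n)\to\sum_j(\partial_j\Psi)(\mathbf{X})\,\s X_j$ in $L^p(\mathbb{P}\otimes\hat{\mathbb{P}})$; since these gradients are conditionally Gaussian given $\mathcal{F}$, the equivalence of Gaussian $L^p$-norms upgrades this to convergence in $L^p(\Omega,L^2(\hat{\mathbb{P}}))$, so that $(\Psi(\mathbf{F}_n))_n$ is Cauchy for $\|\cdot\|_{1,p}$. By the closability established above, $\Psi(\mathbf{X})\in\mathbb{D}^{1,p}$ and its sharp operator equals the announced limit, which is precisely \eqref{Chain-Rule}.
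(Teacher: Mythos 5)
Your proposal is correct and follows essentially the same route as the paper's proof: approximate each $X_i$ by cylindrical functionals in $\mathcal{P}$, apply the ordinary chain rule there, and pass to the limit using the boundedness and continuity of $\nabla\Psi$ together with the $L^p$ convergence of the sharp gradients, concluding by closability. The only difference is that you spell out in more detail (via the splitting into $a_n(\s F_{j,n}-\s X_j)+a_n\,\s X_j$ and the domination/subsequence argument) the convergence step that the paper states more briefly.
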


\begin{proof}
Coming back to our definition of the gradient and the properties (\ref{defi-gradient}), for each $i\in\{1,\cdots,m\}$, one can find a sequence $(F_{n,i})_{n\ge 1}$ in the space $\mathcal{P}$ which converges towards $X_i$ for the norm $\|\cdot\|_{1,p}$. Moreover, since $\Psi\in\mathcal{C}_b^1(\R^m,\R)$, we also get $\Psi(F_{n,1},\cdots,F_{n,m})\in\mathcal{P}$ and the usual chain rule of differential calculus asserts that
\begin{equation*}
\s \Psi(F_{n,1},\cdots,F_{n,m})=\sum_{i=1}^m \partial_i \Psi(F_{n,1},\cdots,F_{n,m}) \s F_{n,i}.
\end{equation*}
Up to extracting a subsequence, we may assume that $F_{n,i}\to X_i$ almost surely for every $i\in\{1,\cdots,m\}$. Besides, $\s F_{n,i}$ converges towards $\s X_i$ in $L^p(\Omega,L^2(\mathbb{\hat{P}}))$ (as well as in $L^p(\mathbb{P}\otimes \hat{\mathbb{P}})$ since these norms are equivalent in our framework) and $\nabla \Psi$ is bounded and continuous. This ensures that
$$\sum_{i=1}^m \partial_i \Psi(F_{n,1},\cdots,F_{n,m}) \s F_{n,i} \xrightarrow[n\to\infty]{L^p(\Omega,L^2(\mathbb{\hat{P}}))}~\sum_{i=1}^m \partial_i \Psi(X_1,\cdots,X_m) \s X_i.
$$
Relying on the global Lipschitz property of $\Psi$ we also have $$\Psi(F_{n,1},\cdots,F_{n,m})\xrightarrow[n\to\infty]{L^p(\mathbb{P})}~\Psi(X_1,\cdots,X_m),$$ which achieves the proof.
\end{proof}

The cornerstone of our approach is the following criterion of density which is due to N. Bouleau. Originating from Dirichlet forms theory, it is customarily called the property of \textit{density of the energy image}.

\begin{thm}\label{EID}
For any $p\ge 1$ and any $X\in \mathbb{D}^{1,p}$ we have
\begin{equation}\label{EIDeq}
X_*\left(\sqrt{\hat{\E}\left((\s X)^2\right)}d\mathbb{P}\right)\ll\lambda.
\end{equation}
where $\mu \ll \nu$ means that $\mu$ is absolutely continuous with respect to $\nu$ and $\lambda$ stands for the Lebesgue measure on $\mathbb R$.
\end{thm}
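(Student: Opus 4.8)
The plan is to unwind the statement into the single scalar assertion that the finite measure $\mu := X_*\big(\sqrt{\hat{\E}[(\s X)^2]}\,d\mathbb{P}\big)$ gives zero mass to every Lebesgue--null Borel set $N\subset\R$; since $X\in\mathbb{D}^{1,p}$ forces $\sqrt{\hat{\E}[(\s X)^2]}\in L^p(\mathbb P)\subset L^1(\mathbb P)$, $\mu$ is indeed finite and this is exactly $\mu\ll\lambda$. Writing $\Gamma(X):=\hat{\E}[(\s X)^2]$ and recalling that, conditionally on $\mathcal F$, the variable $\s X$ is centered Gaussian with variance $\Gamma(X)$, one has $\hat{\E}[|\s X|]=\sqrt{2/\pi}\,\sqrt{\Gamma(X)}$, so that it suffices to prove
\[
(\E\otimes\hat{\E})\big[\mathds 1_N(X)\,|\s X|\big]=0\qquad\text{whenever}\quad \lambda(N)=0.
\]
The whole idea is to extract this vanishing from the rotational invariance of the Gaussian law on the product space, combined with the elementary one--dimensional area (Banach indicatrix) formula.

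First I would introduce, on $(\Omega\times\hat\Omega,\mathbb P\otimes\hat{\mathbb P})$, the rotation $T_\theta$ acting coordinatewise by $X_i\mapsto X_i^\theta:=\cos\theta\,X_i+\sin\theta\,\hat X_i$ and $\hat X_i\mapsto-\sin\theta\,X_i+\cos\theta\,\hat X_i$. Each planar rotation preserves the two--dimensional standard Gaussian and independence across $i$ is untouched, so $T_\theta$ is measure preserving. For a cylindrical $F=F(X_1,\dots,X_m)\in\mathcal P$ the path $g(\theta):=F\circ T_\theta$ is $C^1$, with
\[
g'(\theta)=\sum_{i=1}^m\partial_iF(T_\theta)\,\tfrac{d}{d\theta}X_i^\theta=(\s F)\circ T_\theta,\qquad g'(0)=\s F,
\]
so that $g(\theta)=g(0)+\int_0^\theta(\s F)\circ T_s\,ds$. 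For a general $X\in\mathbb D^{1,p}$ I would take an approximating sequence $F_n\to X$ for $\|\cdot\|_{1,p}$ and pass to the limit in this representation: since $T_s$ is measure preserving, $F_n\circ T_s\to X\circ T_s$ and $(\s F_n)\circ T_s\to(\s X)\circ T_s$ in $L^p$, uniformly enough in $s\in[0,2\pi]$ to conclude that $g(\theta):=X\circ T_\theta$ satisfies, for $\mathbb P\otimes\hat{\mathbb P}$--a.e. point, the identity $g(\theta)=g(0)+\int_0^\theta(\s X)\circ T_s\,ds$; in particular $\theta\mapsto g(\theta)$ is absolutely continuous with $g'(\theta)=(\s X)\circ T_\theta$.

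Then, freezing a typical $(\omega,\hat\omega)$, I would apply the one--dimensional area formula to the absolutely continuous map $\theta\mapsto g(\theta)$ on $[0,2\pi]$, namely
\[
\int_0^{2\pi}\mathds 1_N(g(\theta))\,|g'(\theta)|\,d\theta=\int_{\R}\mathds 1_N(t)\,\#\{\theta:g(\theta)=t\}\,dt=0,
\]
the last equality holding because one integrates a nonnegative function over the $\lambda$--null set $N$. Integrating over the product space and using both $\mathds 1_N(g(\theta))|g'(\theta)|=\big(\mathds 1_N(X)|\s X|\big)\circ T_\theta$ and the measure preservation of $T_\theta$ gives
\[
0=\int_0^{2\pi}(\E\otimes\hat{\E})\big[\mathds 1_N(X)\,|\s X|\big]\,d\theta=2\pi\,(\E\otimes\hat{\E})\big[\mathds 1_N(X)\,|\s X|\big],
\]
which is the desired identity; recalling $\hat{\E}[|\s X|]=\sqrt{2/\pi}\,\sqrt{\Gamma(X)}$ yields $\mu(N)=0$, hence $\mu\ll\lambda$.

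The routine points (finiteness of $\mu$, the conditional Gaussian computation of $\hat{\E}[|\s X|]$, the scalar area formula) are harmless. The genuine difficulty, and the step I would spend the most care on, is the passage from cylindrical $F$ to a general $X\in\mathbb D^{1,p}$: one must justify that $X$ admits a version along the rotation flow for which $\theta\mapsto X\circ T_\theta$ is almost surely absolutely continuous with derivative $(\s X)\circ T_\theta$, that is, that the fundamental theorem of calculus survives the $\|\cdot\|_{1,p}$--limit simultaneously for almost every $\theta$ and almost every point. This is precisely where the particular choice of the sharp operator pays off, since $\s F_n\to\s X$ in $L^p(\mathbb P\otimes\hat{\mathbb P})$ and the conditional Gaussianity make the relevant $L^p$ estimates uniform in $\theta$; one then extracts an a.e.--convergent subsequence to secure the pathwise integral representation on which the area formula is run.
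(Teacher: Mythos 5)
Your argument is correct, but it is genuinely different from the one in the paper. The paper follows Bouleau's original functional-calculus route: given a Lebesgue-null set $A$, one picks continuous functions $\phi_n$ with $0\le\phi_n\le 1$ converging to $\mathbf{1}_A$ almost everywhere for $\lambda+\mu$, sets $\Phi_n(x)=\int_0^x\phi_n(t)\,dt$, and shows that $\Phi_n(X)$ is a Cauchy sequence in $\mathbb{D}^{1,p}$ whose limit must be $0$ because $\Phi_n(X)\to\int_0^X\mathbf{1}_A(t)\,dt=0$; the chain rule $\s\Phi_n(X)=\phi_n(X)\s X$ then forces $\E\bigl(\mathbf{1}_A(X)\sqrt{\hat{\E}((\s X)^2)}\bigr)=0$. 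That proof leans entirely on the closability of $\s$ and the Lipschitz chain rule, both already established in the paper. You instead exploit the interpretation of $\s$ as the derivative along the rotation flow $T_\theta$ on the doubled space, reduce to $(\E\otimes\hat{\E})[\mathbf{1}_N(X)|\s X|]=0$ via the conditional Gaussianity of $\s X$, and conclude with the one-dimensional area (Banach indicatrix) formula applied to the absolutely continuous path $\theta\mapsto X\circ T_\theta$. This is a legitimate and rather illuminating alternative: it makes geometrically transparent why a nonvanishing gradient forbids the law from charging null sets, and it avoids having to approximate indicators in $L^1(\lambda+\mu)$ for the a priori unknown measure $\mu$. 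The price is the version issue you correctly single out, namely establishing the pathwise identity $X\circ T_\theta=X+\int_0^\theta(\s X)\circ T_s\,ds$ for a general element of $\mathbb{D}^{1,p}$; this is fillable exactly as you sketch (pass to the $L^1$ limit in the corresponding exact identity for cylindrical $F_n$, using that $T_s$ preserves $\mathbb{P}\otimes\hat{\mathbb{P}}$ so the convergence is uniform in $s$, then invoke Fubini), so I regard the proof as complete in substance. Both arguments work for all $p\ge 1$ and use the conditional Gaussianity of $\s X$ in an essential way, yours to compute $\hat{\E}[|\s X|]$ and the paper's implicitly through the equivalence of $L^p$ norms used to set up the domain.
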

\begin{proof}
We reproduce here, using our notations, the original proof of Nicolas Bouleau which has been taken from \cite[page 42]{bouleau2003error} and which we slightly adapt. Take $A$ a Borel set with zero Lebesgue measure and denote by $\mu$ the probability $X_*\left(\sqrt{\hat{\E}\left((\s X)^2\right)}d\mathbb{P}\right)$. The key idea is to consider a sequence $\phi_n$ of continuous functions, such that $0\le \phi_n\le 1$ and which converge towards $\textbf{1}_A$ almost surely for the mixed positive measure $\lambda+\mu$. The existence of such a sequence of functions is ensured by the density of continuous compactly supported functions in the space $L^1(\lambda+\mu)$. Now, one sets $\Phi_n(x)=\int_0^x \phi_n(t) dt$. We shall prove that $\Phi_n(X)$ is a Cauchy sequence in the space $\mathbb{D}^{1,p}$. First, we notice that $\Phi_n$ is one-Lipschitz and vanishes at zero, hence $|\Phi_n(X)|\le |X|$. Besides, using dominated convergence we have
$$\Phi_n(X)=\int_0^X \phi_n(t) dt \xrightarrow[n\to\infty]~\int_0^X \textbf{1}_A(t) dt=0.$$
Gathering these two facts ensure that $\E\left(|\Phi_n(X)|^p\right)\to 0.$ On the other hand, one has by the usual chain rule that a.s.
\begin{eqnarray*}
\sqrt{\hat{\E}\left(\left(\s\left(\Phi_n(X)-\Phi_m(X)\right)\right)^2\right)}&=&\left|\phi_n(X)-\phi_m(X)\right| \sqrt{\hat{\E}\left(\left(\s X\right)^2\right)}.\\
&&\xrightarrow[n,m\to\infty]~\left|\textbf{1}_A(X)-\textbf{1}_A(X)\right| \sqrt{\hat{\E}\left(\left(\s X\right)^2\right)}\\
&=&0.
\end{eqnarray*}
Recall that $\phi_n$ is uniformly bounded and that $\sqrt{\hat{\E}\left(\left(\s X\right)^2\right)}$ is in $L^p(\mathbb{P})$, one can use again dominated convergence. It follows that

$$\E\left(\sqrt{\hat{\E}\left(\left(\s\left(\Phi_n(X)-\Phi_m(X)\right)\right)^2\right)}^p\right)\xrightarrow[n,m\to\infty]~0.$$
As a result, $\Phi_n(X)$ is a Cauchy sequence in the Banach space $\mathbb{D}^{1,p}$ and the limit has to be zero, since it tends to zero in $L^p$. It remains to says that
\begin{eqnarray*}
\E\left(\sqrt{\hat{\E}\left(\left(\s\Phi_n(X)\right)^2\right)}^p\right)\xrightarrow[n\to\infty]~\E\left(\textbf{1}_A(X)\sqrt{\hat{\E}\left(\left(\s X\right)^2\right)}\right),
\end{eqnarray*}
which is necessarily zero since $\Phi(X)$ tends to zero in $\mathbb{D}^{1,p}$.
\end{proof}
We end our introduction by the following chain rule criterion for \textit{Lipschitz} regularity. It is important in our framework since it will allow us to take the Malliavin derivative of the absolute value of a Gaussian process.

\begin{prop}\label{Chain-rule-Lipschitz}Let us assume here that $p>1$. Take $X\in\mathbb{D}^{1,p}$ and $\Psi$ a globally Lispchitz function, then $\Psi(X)\in\mathbb{D}^{1,p}$ and we have
\begin{equation}\label{CRLip-eq}
\s \Phi(X)=\Phi'(X) \s X,
\end{equation}
where $\Phi'$ is any Borelian representation of the derivative of $\Phi$, which exists Lebesgue almost everywhere according to the Rademacher Theorem.
\end{prop}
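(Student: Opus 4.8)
The plan is to reduce everything to the smooth chain rule of Proposition \ref{Chain-Rule-Proposition} by regularizing $\Psi$, the only genuine difficulty being that Rademacher's theorem provides $\Psi'$ only Lebesgue-almost everywhere. First I would mollify: fix a standard nonnegative mollifier $\rho_n$ supported in the ball of radius $1/n$ and set $\Psi_n:=\Psi*\rho_n$. Since $\Psi:\R\to\R$ is globally Lipschitz, each $\Psi_n$ is smooth with $\|\Psi_n'\|_\infty\le \mathrm{Lip}(\Psi)$, hence $\Psi_n\in\mathcal C^1_b(\R,\R)$ in the sense of Proposition \ref{Chain-Rule-Proposition}. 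Moreover $\Psi_n\to\Psi$ uniformly on $\R$ with rate $O(1/n)$, and, writing $\Psi_n'=\Psi'*\rho_n$, the Lebesgue differentiation theorem gives $\Psi_n'\to\Psi'$ at every Lebesgue point of the bounded function $\Psi'$, hence Lebesgue-almost everywhere. Applying Proposition \ref{Chain-Rule-Proposition} to each $\Psi_n$ yields $\Psi_n(X)\in\mathbb{D}^{1,p}$ together with the identity $\s\Psi_n(X)=\Psi_n'(X)\,\s X$.

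I would then pass to the limit $n\to\infty$. The convergence $\Psi_n(X)\to\Psi(X)$ in $L^p(\mathbb P)$ is immediate from the uniform convergence $\Psi_n\to\Psi$, noting that $\Psi(X)\in L^p$ because the Lipschitz bound gives $|\Psi(X)|\le|\Psi(0)|+\mathrm{Lip}(\Psi)|X|$ with $X\in L^p$. For the gradients, since $\Psi_n'(X)$ and $\Psi'(X)$ are $\mathcal F$-measurable one has the exact identity $\hat{\E}\!\left[(\s\Psi_n(X)-\Psi'(X)\,\s X)^2\right]^{1/2}=|\Psi_n'(X)-\Psi'(X)|\,\sqrt{\hat{\E}[(\s X)^2]}$, and this quantity is dominated by $2\,\mathrm{Lip}(\Psi)\sqrt{\hat{\E}[(\s X)^2]}$, which lies in $L^p(\mathbb P)$ precisely because $X\in\mathbb{D}^{1,p}$.

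The main obstacle is the $\mathbb P$-almost sure convergence of $\Psi_n'(X)$ toward $\Psi'(X)$: we only know $\Psi_n'\to\Psi'$ off a Lebesgue-null set $N$, and a priori the law of $X$ might charge $N$. This is exactly where Theorem \ref{EID} is decisive. Applying the conclusion \eqref{EIDeq} to the null set $N$ gives $\E[\mathbf 1_N(X)\sqrt{\hat{\E}[(\s X)^2]}]=0$, so that on the event $\{X\in N\}$ one has $\sqrt{\hat{\E}[(\s X)^2]}=0$, i.e. $\s X=0$ for $\hat{\mathbb P}$-almost every $\hat\omega$. Consequently $|\Psi_n'(X)-\Psi'(X)|\,\sqrt{\hat{\E}[(\s X)^2]}\to 0$ $\mathbb P$-almost surely: on $\{X\notin N\}$ because the first factor tends to zero, and on $\{X\in N\}$ because the second factor already vanishes. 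Combined with the $L^p$-domination above, dominated convergence yields $\s\Psi_n(X)\to\Psi'(X)\,\s X$ in $L^p(\Omega,L^2(\hat{\mathbb P}))$.

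Finally I would collect these two convergences. The sequence $(\Psi_n(X))_n$ converges in $L^p(\mathbb P)$ to $\Psi(X)$ and $(\s\Psi_n(X))_n$ converges in $L^p(\Omega,L^2(\hat{\mathbb P}))$, so $(\Psi_n(X))_n$ is Cauchy for the Malliavin norm $\|\cdot\|_{1,p}$. By completeness of $\mathbb{D}^{1,p}$ and the closability of the sharp operator established above, the limit belongs to $\mathbb{D}^{1,p}$ and its gradient is the limit of the gradients, whence $\Psi(X)\in\mathbb{D}^{1,p}$ and $\s\Psi(X)=\Psi'(X)\,\s X$. The same argument, applied to the Lebesgue-null set on which two Borel representations of $\Psi'$ disagree, shows that the right-hand side does not depend on the chosen representative, since $\s X=0$ $\hat{\mathbb P}$-a.s. on the corresponding event.
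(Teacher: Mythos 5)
Your proof is correct, and it shares the overall skeleton of the paper's argument --- mollify the Lipschitz function, apply the smooth chain rule of Proposition \ref{Chain-Rule-Proposition} to the mollifications, and invoke the energy image density Theorem \ref{EID} to neutralize the Lebesgue-null set on which $\Psi_n'$ fails to converge to $\Psi'$ --- but the limit passage is handled by a genuinely different mechanism. The paper views $\Phi_{\epsilon_n}'(X)\,\s X$ as a bounded sequence in $L^p(\mathbb{P}\otimes\hat{\mathbb{P}})$, extracts a weakly convergent subsequence, upgrades to strong convergence of convex combinations via Mazur's lemma, and only then identifies the limit as $\Phi'(X)\,\s X$ through Theorem \ref{EID}; this is precisely where the hypothesis $p>1$ enters (weak sequential compactness of bounded sets in $L^p$). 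You instead prove directly that $|\Psi_n'(X)-\Psi'(X)|\sqrt{\hat{\E}\left[(\s X)^2\right]}\to 0$ $\mathbb{P}$-almost surely, using Theorem \ref{EID} to force $\sqrt{\hat{\E}\left[(\s X)^2\right]}=0$ on the event $\{X\in N\}$, and conclude by dominated convergence, the dominating function $2\,\mathrm{Lip}(\Psi)\sqrt{\hat{\E}\left[(\s X)^2\right]}$ lying in $L^p(\mathbb{P})$ because $X\in\mathbb{D}^{1,p}$. This is more elementary (no weak compactness, no Mazur) and, as a bonus, nowhere uses $p>1$, so your argument in fact covers the case $p=1$ as well. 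Your closing remark that the formula does not depend on the chosen Borel representative of $\Psi'$ matches the paper's observation and rests on the same absolute-continuity mechanism.
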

\begin{proof}
Take $p_\epsilon=\frac{1}{\epsilon} p(\frac{\cdot}{\epsilon})$ a regularizing kernel and set $\Phi_\epsilon=\Phi \star p_\epsilon$, we have $\Phi_\epsilon'\xrightarrow[\epsilon\to 0]{L^{1,loc}(\lambda)} \Phi'$. Hence, we may find $\epsilon_n\to 0$ such that $\Phi_{\epsilon_n}'$ converges Lebesgue almost surely towards $\Phi'$. On the other hand, $\Phi_{\epsilon_n}'(X) \s X$ is a bounded sequence in $L^p(\mathbb{P}\otimes \hat{\mathbb{P}})$ and one can extract a subsequence which converges weakly, here the fact that $p>1$ plays a role since the unit ball of $L^\infty$ is not sequentially compact. Using Mazur Lemma one can even find convex linear combinations of terms of the form $\Phi'_{\epsilon_n}(X)\s X$ which converge \textit{strongly} in $L^p(\mathbb{P}\otimes\hat{\mathbb{P}})$ towards a term of the form $g(X) \s X$, for some Borelian function $g$ to be determined later. Now, relying on proposition \ref{Chain-Rule-Proposition}, we may write

$$\s \Phi_{\epsilon_n}(X)=\Phi'_{\epsilon_n}(X) \s X.$$
We then have:
{\small
\begin{eqnarray*}
&&\E\hat{\E}\left(\left|\sum_{i=p_n}^{q_n}\alpha_{i,n}\left(\Phi'_{\epsilon_i}(X)-g(X)\right)\s X\right|^p\right)\sim \E\left(\hat{\E}\left(\left(\sum_{i=p_n}^{q_n}\alpha_{i,n}\left(\Phi'_{\epsilon_i}(X)-g(X)\right)\s X\right)^2\right)^{\frac{p}{2}}\right)\\
&&=\E\left(\left|\sum_{i=p_n}^{q_n}\alpha_{i,n}\left(\Phi'_{\epsilon_i}(X)-g(X)\right)\right|^p \hat{\E}(\s X^2)^{\frac p 2}\right) \xrightarrow[n\to\infty]~ 0.
\end{eqnarray*}
According to the Theorem \ref{EID}, if $\hat{\E}\left(\s X ^2\right) \neq 0$ then $X$ belongs to the support of the one dimensional Lebesgue measure and from the previous discussion we can assert that 
$$
\E\left(\left|\sum_{i=p_n}^{q_n}\alpha_{i,n}\left(\Phi'_{\epsilon_i}(X)-g(X)\right)\right|^p \hat{\E}(\s X^2)^{\frac p 2}\right)\to
\E\left(\left|\Phi'(X)-g(X)\right|^p \hat{\E}(\s X^2)^{\frac p 2}\right)=0.
$$
}
It is enough to say that $\sum_{i=p_n}^{q_n}\alpha_{i,n}\Phi_{\epsilon_i}(X)$ is a Cauchy sequence in $\mathbb{D}^{1,p}$ which tends to $\Phi(X)$. The previous reasoning ensures us that $\s \Phi(X)= \Phi'(X) \s X$. We notice that the result does not depend on the specific choice of a Borelian representation of $\Phi'$, which also appears in the previous proof.
\end{proof}
\subsection{Computing the Malliavin derivative of the nodal volume}

Let us now describe how the closed Kac--Rice formulas established in Section \ref{sec.Kac} associated with the Bouleau--Hirsch criterion allow to derive the absolution continuity of the nodal volume. 

\subsubsection{Generating Gaussian processes on suitable probability spaces} On a probability space $(\Omega,\mathcal{G},\mathbb{P})$, we consider a continuous Gaussian process $(X_t)_{t\in\mathbb{T}^d}$. The very first step of our approach is classic and consists in generating $(X_t)_{t\in\mathbb{T}^d}$ in a probability space $(\Omega,\mathcal{F},\mathbb{P})$ where $\mathcal{F}=\sigma(Y_i;i\ge 1)$ and where $(Y_i)_{i\ge 1}$ is an i.i.d. sequence of standard Gaussian random variables. Relying on the continuity, $(X_t)_{t\in\mathbb{T}^d}$ is also measurable with respect to the sigma-field $\mathcal{F}:=\sigma \left( X_t; t\in\mathbb{Q}^d\right)$. Moreover, using Gram-Schmidt procedure, one can find an i.i.d. sequence of standard Gaussian $(Y_i)_{i\ge 1}$ such that $\text{Adh}_{L^2}\left(\text{Vect}(Y_i;i\ge 1)\right)=\text{Adh}_{L^2}\left(\text{Vect}(X_t;t\in\mathbb{Q}^d)\right)$ and necessarily $\sigma(Y_i;i\ge 1)=\mathcal{F}$. Hence the claim follows. Now, in order to use the tools of Malliavin calculus introduced in the subsection \ref{Malliavin-introduction}, we give us a copy of the previous probability space, namely $(\hat{\Omega},\hat{\mathcal{F}},\hat{\mathbb{P}})$ with $\hat{\mathcal{F}}=\sigma(\hat{Y}_i;i\ge 1)$. For any $t\in\mathbb{T}^d$ there exists $(\alpha_i(t))_{i\ge 1} \in l^2(\mathbb{N}^*)$ such that
\[
X_t\stackrel{L^2}{=} \sum_{i\ge 1} \alpha_i (t) Y_i, \quad \text{and similarly} \quad \hat{X}_t\stackrel{L^2}{=} \sum_{i\ge 1} \alpha_i (t) \hat{Y}_i.
\]
It follows that $(X_t)_{\mathbb{T}^d}\stackrel{\text{Law}}{=} (\hat{X}_t)_{t\in\mathbb{T}^d}$. Besides, up to a modification, we shall always assume that the process $(\hat{X}_t)_{t\in\mathbb{T}^d}$ has the same regularity as $(X_t)_{t\in\mathbb{T}^d}$. Without loss of generality, we shall assume that the various Gaussian processes considered below are generated on a suitable probability space and that we can deploy the Malliavin calculus tools introduced in section \ref{Malliavin-introduction}.

\subsubsection{Some technical lemmas}The next lemmas are essentially technical and will be used in the computations of the Malliavin derivatives of the nodal volume. For the sake of clarity we introduce the two followings domains:

$$
\begin{array}{ll}
&\displaystyle{\mathbb{D}^{1,\infty}=\bigcap_{p\ge 1} \mathbb{D}^{1,p}},\\
&\displaystyle{\mathbb{D}^{1,p^{-}}=\bigcap_{1\le q<p}\mathbb{D}^{1,q}}.
\end{array}
$$

\begin{lemma}\label{CR-technical}
Let $p>1$, let $\alpha>0$ and $(X_1,\cdots,X_m,X_{m+1})\in\mathbb{D}^{1,\infty}$. Let us assume that
$$
\begin{array}{ll}
(i) & \mathbb{P}\text{-a.s.},\, X_{m+1}>0,\\

&\\

(ii) & \frac{X_1 X_2 \cdots X_m}{X_{m+1}^\alpha}\in L^p(\mathbb{P}),\\

&\\

(iii) & \displaystyle{\sum_{i=1}^m |\s X_i|\left|\frac{ \prod_{j\neq i} X_j}{X_{m+1}^\alpha}\right|+ \left(\prod_{i=1}^m|X_i| \right) \left|\frac{\s X_{m+1}}{X_{m+1}^{\alpha+1}}\right| \in L^p(\mathbb{P}\otimes \hat{\mathbb{P}})}.\\
\end{array}
$$

Then we get that $\frac{X_1 X_2 \cdots X_m}{X_{m+1}}\in\mathbb{D}^{1,p}$ as well as the formula
\begin{equation}\label{Technical1}
\s \left[\frac{X_1 X_2 \cdots X_m}{X_{m+1}^{\alpha+1}}\right]=\sum_{i=1}^m \s X_i \frac{\prod_{j\neq i} X_j}{X_{m+1}^{\alpha}}-\alpha\frac{\s X_{m+1}}{X_{m+1}^{\alpha+1}}\prod_{i=1}^m X_i.
\end{equation}
\end{lemma}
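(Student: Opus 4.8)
The plan is to realise the quantity of interest $Z:=X_1\cdots X_m\,X_{m+1}^{-\alpha}$ (the functional appearing in $(ii)$, and whose formal sharp derivative is the right-hand side of \eqref{Technical1}) as an $L^p$-limit of functionals to which the chain rule of Proposition \ref{Chain-Rule-Proposition} does apply, and then to pass to the limit inside $\mathbb{D}^{1,p}$. The map $\Phi(x_1,\dots,x_{m+1})=x_1\cdots x_m\,x_{m+1}^{-\alpha}$ is smooth on the half-space $\{x_{m+1}>0\}$, but it is neither bounded nor of bounded gradient, and it is singular at $x_{m+1}=0$; hence Proposition \ref{Chain-Rule-Proposition} cannot be invoked directly and a truncation is needed.

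First I would build the approximating maps by truncating each coordinate separately. Fix a smooth $\theta_n:\R\to\R$ with $\theta_n(t)=t$ on $[-n,n]$, $|\theta_n(t)|\le |t|$, $|\theta_n'|\le 1$, and $\theta_n,\theta_n'$ bounded; then $\theta_n(t)\to t$ and $\theta_n'(t)\to 1$ pointwise. For the denominator I would choose a smooth $g_n:\R\to\R$, bounded with bounded derivative, such that $g_n(s)=s^{-\alpha}$ for $s\ge 1/n$ and satisfying the uniform two-sided bounds $|g_n(s)|\le C\,s^{-\alpha}$ and $|g_n'(s)|\le C\,s^{-\alpha-1}$ for all $s>0$, with $C$ independent of $n$. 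Setting $\Psi_n(x):=\big(\prod_{i=1}^m\theta_n(x_i)\big)\,g_n(x_{m+1})$, the function $\Psi_n$ belongs to $\mathcal{C}^1_b(\R^{m+1},\R)$, so by Proposition \ref{Chain-Rule-Proposition} we have $\Psi_n(X)\in\mathbb{D}^{1,p}$ together with
\[
\s\Psi_n(X)=\sum_{i=1}^m\theta_n'(X_i)\Big(\prod_{j\le m,\,j\neq i}\theta_n(X_j)\Big)g_n(X_{m+1})\,\s X_i+\Big(\prod_{i\le m}\theta_n(X_i)\Big)g_n'(X_{m+1})\,\s X_{m+1}.
\]

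Next I would pass to the limit. Since $(i)$ gives $X_{m+1}>0$ almost surely, for $\mathbb{P}\otimes\hat{\mathbb P}$-almost every $(\omega,\hat\omega)$ one has $\theta_n(X_i)=X_i$, $\theta_n'(X_i)=1$, $g_n(X_{m+1})=X_{m+1}^{-\alpha}$ and $g_n'(X_{m+1})=-\alpha X_{m+1}^{-\alpha-1}$ as soon as $n$ is large (a threshold depending only on the values $X_1,\dots,X_{m+1}$), so $\Psi_n(X)\to Z$ and $\s\Psi_n(X)$ converges pointwise to the right-hand side of \eqref{Technical1}. The uniform bounds on $\theta_n,g_n$ then furnish fixed dominating functions: $|\Psi_n(X)|\le C\,|X_1\cdots X_m|\,X_{m+1}^{-\alpha}$, which lies in $L^p(\mathbb P)$ by $(ii)$, and
\[
|\s\Psi_n(X)|\le C\sum_{i=1}^m|\s X_i|\,\Big|\frac{\prod_{j\neq i}X_j}{X_{m+1}^{\alpha}}\Big|+C\Big(\prod_{i=1}^m|X_i|\Big)\Big|\frac{\s X_{m+1}}{X_{m+1}^{\alpha+1}}\Big|,
\]
which lies in $L^p(\mathbb P\otimes\hat{\mathbb P})$ by $(iii)$. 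Dominated convergence yields $\Psi_n(X)\to Z$ in $L^p(\mathbb P)$ and $\s\Psi_n(X)\to$ [the right-hand side of \eqref{Technical1}] in $L^p(\mathbb P\otimes\hat{\mathbb P})$; because each difference $\s\Psi_n(X)-\s\Psi_{n'}(X)$ is Gaussian conditionally on $\mathcal F$, this last convergence is equivalent, via the equivalence of conditional Gaussian $L^p$-norms, to convergence for the Malliavin norm $\|\cdot\|_{1,p}$. Hence $(\Psi_n(X))_n$ is Cauchy in $\mathbb{D}^{1,p}$, and by completeness of $\mathbb{D}^{1,p}$ together with the closability of $\s$ established above, its limit $Z$ belongs to $\mathbb{D}^{1,p}$ and $\s Z$ equals the $L^p$-limit of $\s\Psi_n(X)$, i.e.\ the right-hand side of \eqref{Technical1}.

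The only genuinely delicate point is the construction of $g_n$: one must secure simultaneously the uniform bounds $|g_n|\le C\,s^{-\alpha}$ and $|g_n'|\le C\,s^{-\alpha-1}$ across the gluing region near $s=1/n$, since it is precisely these bounds that let assumptions $(ii)$ and $(iii)$ serve as dominating functions. Everything else is a routine combination of the chain rule of Proposition \ref{Chain-Rule-Proposition} with dominated convergence inside the complete space $\mathbb{D}^{1,p}$.
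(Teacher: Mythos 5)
Your proof is correct and follows essentially the same route as the paper: truncate the numerator factors by a cut-off equal to the identity on a large interval, regularize the negative power of $X_{m+1}$, apply the $\mathcal{C}^1_b$ chain rule of Proposition \ref{Chain-Rule-Proposition}, and pass to the limit by dominated convergence using (ii)--(iii) as dominating functions together with the completeness of $\mathbb{D}^{1,p}$. The one point you flag as delicate -- building $g_n$ with the uniform bounds $|g_n(s)|\le C s^{-\alpha}$ and $|g_n'(s)|\le C s^{-\alpha-1}$ across the gluing region -- is handled in the paper by the explicit choice $\theta_\epsilon(x)=(x^2+\epsilon)^{-\alpha/2}$, which satisfies both bounds with $C=1$ (resp. $C=\alpha$) for all $x>0$ without any gluing.
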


\begin{proof}
Set $\theta_\epsilon(x):=\frac{1}{\left(x^2+\epsilon\right)^{\frac{\alpha}{2}}}$ and let $\chi_M$ be a smooth function satisfying 
\[
\begin{array}{ll}
\chi_M(x)=x \;\; \text{on}\;\; [-M,M], & \chi_M=2M \;\; \text{on}\;\;  [2M,+\infty[, \\
\\
\chi_M=-2M \;\; \text{on}\;\;]-\infty,-2M], & |\chi_M(x)|\le |x|, \;\; \forall x \in \mathbb R.
\end{array}
\]
 One can use the content of proposition \ref{Chain-Rule-Proposition} and get that $\theta_\epsilon (X_{m+1})\prod_{i=1}^m \chi_M(X_i)\in \mathbb{D}^{1,p}$ together with the formula:

\begin{eqnarray*}
\s\left[\prod_{i=1}^m \chi_M(X_i)\theta_{\epsilon}(X_{m+1})\right]&=&\sum_{i=1}^m \chi_M'(X_i) \s X_i \left(\prod_{j\neq i} \chi_{M}(X_j)\right)\theta_{\epsilon}(X_{m+1})\\
&+&\left(\prod_{i=1}^m \chi_M(X_i)\right) \theta_\epsilon'(X_{m+1}) \s X_{m+1}.
\end{eqnarray*}

Let us notice that
\begin{itemize}
\item $\sup_{x\in\R,M>0}\|\chi_M'(x)\|_\infty<+\infty$, 
\item $\forall x\in \R, |\chi_M(x)|\le |x|$,
\item $\forall x>0, |\theta_\epsilon(x)|\le \frac{1}{x^\alpha},$
\item $\forall x>0, |\theta_\epsilon'(x)|\le \frac{\alpha}{x^{\alpha+1}}$.
\end{itemize}

Then one may use the dominated convergence Theorem (with domination given by the above estimates and our assumptions (ii)-(iii)) to ensure that

\begin{eqnarray*}
&&\prod_{i=1}^m \chi_M(X_i)\theta_{\epsilon}(X_{m+1})\xrightarrow[M\to\infty,\epsilon\to 0]{L^p(\mathbb{P})}~\frac{\prod_{i=1}^m X_i}{X_{m+1}^\alpha}\\
&&\s\left[\prod_{i=1}^m \chi_M(X_i)\theta_{\epsilon}(X_{m+1})\right]\xrightarrow[M\to\infty,\epsilon\to 0]{L^p(\mathbb{P\otimes \hat{\mathbb{P}}})}~\sum_{i=1}^m \s X_i \frac{\prod_{j\neq i} X_j}{X_{m+1}^{\alpha}}-\alpha\frac{\s X_{m+1}}{X_{m+1}^{\alpha+1}}\prod_{i=1}^m X_i.
\end{eqnarray*}

The result follows from the completeness of $\mathbb{D}^{1,p}$ with respect to the norm $\|\cdot\|_{1,p}$.
\end{proof}

\begin{lemma}\label{Integrand is in the domain}
We will assume here that $f\in \mathcal{C}^2(\mathbb{T}^d,\R)$ is a stationary Gaussian process  and we make here the crucial assumption that the covariance matrix of the Gaussian vector $(f,\nabla f)$ is non-degenerated. Then, for any $d\ge 3$ and any $x\in\mathbb{T}^d$ we have

\begin{eqnarray*}
&\text{(a)}&\left( f(x) \Delta f(x)-  ||\nabla f(x)||^2 \right) \frac{|f(x)|}{\eta_f(x)^3}\in \mathbb{D}^{1,\frac{d+1}{3}-}\\
&\text{(b)}&|f(x)| \left(\|\text{Hess}_x (f)\|^2-\text{Tr}\left(\text{Hess}_x(f)\right)^2\right)\frac{1}{\eta_f(x)^3} \in \mathbb{D}^{1,\frac{d+1}{3}-}\\
&\text{(c)}&\frac{|f(x)|}{\eta_f^5(x)}\sum_{i,j=1}^d \left(\partial^2_{i,i} f(x) \partial_j f(x)-\partial^2_{i,j} f(x) \partial_i f(x)\right)\partial_j\left(\eta_f^2(x)\right)\in\mathbb{D}^{1,\frac{d+1}{3}-}
\end{eqnarray*}
\end{lemma}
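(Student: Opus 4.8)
The plan is to reduce each of the three statements to finitely many applications of the technical chain rule of Lemma \ref{CR-technical}, once the relevant building blocks and a single scalar integrability estimate have been recorded. First I would fix $x$ and observe that $f(x),\partial_i f(x),\partial^2_{ij}f(x)$ are jointly Gaussian, hence lie in $\mathbb{D}^{1,\infty}$, with sharp derivatives $\s f(x)=\hat f(x)$, $\s\partial_i f(x)=\partial_i\hat f(x)$, $\s\partial^2_{ij}f(x)=\partial^2_{ij}\hat f(x)$ given by the corresponding derivatives of the independent copy $\hat f$; these are $\hat{\mathbb{P}}$-Gaussian with finite moments of every order and, as $\hat{}$-measurable objects, independent of the block $(f,\nabla f,\mathrm{Hess}\,f)$. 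By Proposition \ref{Chain-rule-Lipschitz} the Lipschitz function $|f(x)|$ also belongs to $\mathbb{D}^{1,p}$ for every $p>1$, hence to $\mathbb{D}^{1,\infty}$ by the nesting $\mathbb{D}^{1,p}\subset\mathbb{D}^{1,q}$ ($p\ge q$), with $\s|f(x)|=\mathrm{sign}(f(x))\,\hat f(x)$; and $\eta_f^2=f^2+\|\nabla f\|^2\in\mathbb{D}^{1,\infty}$ is almost surely strictly positive since $(f(x),\nabla f(x))$ has a density. The only analytic input needed is that, $(f(x),\nabla f(x))$ being a non-degenerate $(d+1)$-dimensional Gaussian vector with bounded density, a passage to polar coordinates in $\mathbb{R}^{d+1}$ yields
\[
\eta_f(x)^{-s}\in L^r(\mathbb{P})\quad\Longleftrightarrow\quad rs<d+1 .
\]

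Next I would expand each of (a), (b), (c) into a finite sum of monomials of the shape $\dfrac{|f|\,Z_1\cdots Z_{m-1}}{\eta_f^{2\alpha}}$, where each $Z_l\in\{f,\partial_i f,\partial^2_{ij}f\}$ and $\alpha=\tfrac32$ for (a)--(b), $\alpha=\tfrac52$ for (c); for (c) one first substitutes $\nabla\eta_f^2=2f\nabla f+2\,\mathrm{Hess}(f)\,\nabla f$ into the integrand. To each monomial I apply Lemma \ref{CR-technical} with $X_{m+1}=\eta_f^2$ and the remaining factors as $X_1,\dots,X_m$, one of them being $|f|$. Hypothesis (i) is the positivity just noted. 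For (ii)--(iii) the mechanism is uniform: the elementary bounds $|f|\le\eta_f$ and $|\partial_i f|\le\eta_f$ absorb numerator factors into the denominator, while second-order derivatives and all hatted quantities are Gaussian with moments of every order. Since $|\s\eta_f^2|=\bigl|2f\,\hat f+2\langle\nabla f,\nabla\hat f\rangle\bigr|\le 2\eta_f\bigl(|\hat f|+\sum_k|\partial_k\hat f|\bigr)$, each piece of (ii) is bounded by a product of Gaussian factors times at most $\eta_f^{-2}$, and each piece of (iii) by such a product times at most $\eta_f^{-3}$; moreover every bound factorizes into an $f$-measurable part and an $\hat{}$-measurable part. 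Hölder's inequality together with the displayed estimate then places every monomial of (ii) in $L^p(\mathbb{P})$ for $p<\tfrac{d+1}{2}$ and every monomial of (iii) in $L^p(\mathbb{P}\otimes\hat{\mathbb{P}})$ for $p<\tfrac{d+1}{3}$, whence Lemma \ref{CR-technical} gives each monomial in $\mathbb{D}^{1,p}$ for all $p<\tfrac{d+1}{3}$. Summing finitely many monomials and using that $\mathbb{D}^{1,p}$ is a vector space yields (a), (b), (c) $\in\mathbb{D}^{1,\frac{d+1}{3}-}$.

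The delicate point, and where both the exponent $\tfrac{d+1}{3}$ and the hypothesis $d\ge 3$ are decided, is hypothesis (iii): one must verify that no derivative piece is more singular than $\eta_f^{-3}$. Two mechanisms realize exactly this borderline order and no worse, namely differentiating $|f|$, whose sharp derivative $\mathrm{sign}(f)\hat f$ does \emph{not} vanish as $f\to 0$ and thus leaves a pure $\eta_f^{-3}$ in terms (b) and (c); and the denominator term $-\alpha\,\s(\eta_f^2)\,\eta_f^{-2(\alpha+1)}\prod X_i$ of formula \eqref{Technical1}, in which the single power of $\eta_f$ gained from $\s\eta_f^2$ only partially compensates the extra $\eta_f^{-2}$. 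I expect the bookkeeping of these $\eta_f$-powers, monomial by monomial, to be the main (though entirely elementary) labor. Finally, both Proposition \ref{Chain-rule-Lipschitz} and Lemma \ref{CR-technical} require an exponent $p>1$; since the target integrability is $p<\tfrac{d+1}{3}$, a nonempty admissible range forces $\tfrac{d+1}{3}>1$, that is $d\ge 3$, which is precisely the standing assumption.
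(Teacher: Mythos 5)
Your proposal is correct and follows essentially the same route as the paper: the polar--coordinate computation showing $\eta_f^{-3}\in L^{\beta}(\mathbb{P})$ exactly for $\beta<\frac{d+1}{3}$, the absorption of numerator factors via $|f|\le\eta_f$ and $|\partial_i f|\le\eta_f$, the identification of the worst case ($\eta_f^{-2}$ before and $\eta_f^{-3}$ after applying the sharp operator), and the appeal to Lemma \ref{CR-technical}. You simply carry out more explicitly the monomial bookkeeping, the bound on $\s\eta_f^{2}$ and the H\"older step that the paper compresses into the sentence about counting $n_1$ and $n_2$.
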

\begin{proof}
Let us fix $x\in\mathbb{T}^d$, the stationarity ensures us that the forthcoming computations do not depend on $x$.
Recall that $\eta_f^2(x)=f^2(x)+\|\nabla f(x)\|^2$ is almost surely positive in virtue of \cite[p. 132 prop. 6.12]{azais2009level}, and denote by $\Sigma$ the covariance matrix of the $(d+1)-$dimensional Gaussian vector $(f(x),\nabla f(x))$ which is assumed to be invertible. Set $\mu$ the minimum eigenvalue of $\Sigma$, one has

\begin{eqnarray*}
\E\left(\frac{1}{\eta_f^{3\beta}}\right)&=&\int_{\R^{d+1}} \frac{1}{\|\Vec{x}\|^{3\beta}}\exp\left(-\frac{1}{2}~ {}^t \Vec{x}~\Sigma ~\Vec{x} \right)\frac{dx}{\sqrt{(2\pi)^{d+1}\det\Sigma}}\\
&\le& \int_{\R^{d+1}}\frac{1}{\|\Vec{x}\|^{3\beta}}\exp\left(-\frac{1}{2}\mu\|\Vec{x}\|^2\right)\frac{dx}{\sqrt{(2\pi\mu)^{d+1}}}\\
&=&\frac{1}{\sqrt{(2\pi\mu)^{d+1}}}\int_{\mathbb{S}^d}\int_0^\infty \frac{1}{r^{3\beta}}e^{-\frac{\mu}{2}r^2} r^d dr d\sigma(u).
\end{eqnarray*}
This quantity is finite if and only if $d>3\beta-1$ or else $\beta<\frac{d+1}{3}$. As a result, for $1\le\beta<\frac{d+1}{3}$ on has $\eta_f^{-3}\in L^\beta.$ In order to treat the cases $(a)-(b)-(c)$, one must notice that $|f|\le \eta_f$ and that for all $i\in\{1,\cdots,d\}$ we have also $|\partial_i f|\le \eta_f$. Next, for each term appearing on the expressions involved in cases $(a)-(b)-(c)$, one must count $n_1$ the number of factors in the set $\{f,\nabla f\}$ appearing at the numerator and the power $n_2$ of $\eta_f$ at the denominator. The worst case possible is given by $n_2-n_1=2$ which requires the integrability of $\eta_f^{-3}$ after derivation (and $\eta_f^{-2}$ before) and gives the threshold $\frac{d+1}{3}$, according to the previous computation. Hence all the integrability conditions will be satisfied and one can applies the Lemma \ref{CR-technical}. As a matter of fact, all terms $(a)$, $(b)$ and $(c)$ actually belong to $\mathbb{D}^{1,\frac{d+1}{3}-}$.

\end{proof}

\begin{rem}\label{moment}
The previous proof shows that the Malliavin derivative of $\mathcal{H}^{d-1}(f=0)$ has moment of any order less than $\frac{d+1}{3}$ which gives the announced domain $\mathbb{D}^{1,\frac{d+1}{3}}$. If one is only interested in the integrability of $\mathcal{H}^{d-1}(f=0)$, the latter holds as soon as $\eta_f^{-2}$ is integrable. Hence, we have 
$$\mathcal{H}^{d-1}(f=0)\in  L^{\frac{d+1}{2}-}(\mathbb{P}).$$
As noticed in the introduction, the integrability of the nodal volume thus increases with the dimension $d$, under the sole regularity $\mathcal C^2$. 
\end{rem}

\begin{lemma}\label{Integral is in the domain}
Under the assumptions of the previous lemma, for any $d\ge 3$ we have $\mathcal{H}_{d-1}\left(f=0\right)\in \mathbb{D}^{1,\frac{d+1}{3}-}$ and we have

\begin{equation}\label{sharpduvolume}
\begin{array}{ll}
&\s\mathcal H^{d-1}\left(\{f=0\}\right)  =\displaystyle{ - \frac{1}{2} \int_{\mathbb T^d} \s \left[\left( f(x) \Delta f(x)-  ||\nabla f(x)||^2 \right) \frac{|f(x)|}{\eta_f(x)^3} \right]dx}\\
\\
&  \displaystyle{+\int_{\mathbb{T}^d} \s\left[|f(x)| \left(\|\text{Hess}_x (f)\|^2-\text{Tr}\left(\text{Hess}_x(f)\right)^2\right)\frac{1}{\eta_f(x)^3}\right] dx}\\
\\
& \displaystyle{+\frac{3}{2} \int_{\mathbb{T}^d}\s\left[\frac{|f(x)|}{\eta_f^5(x)}\sum_{i,j=1}^d \left(\partial^2_{i,i} f(x) \partial_j f(x)-\partial^2_{i,j} f(x) \partial_i f(x)\right)\partial_j\left(\eta_f^2(x)\right)\right] dx.}
\end{array}
\end{equation}

\begin{proof}
The proof consists in approximating the integrals by Riemann sums. Indeed, by assumption, expressions $(a)-(b)-(c)$ are continuous almost surely, hence their Riemann sums converge almost surely. Besides, all the expressions $(a)-(b)-(c)$ are bounded in $L^\beta$ for every $\beta<\frac{d+1}{3}$, hence so do the Riemann sums. Theses two facts ensure that the Riemann sums converge in $L^\beta$ for every $\beta<\frac{d+1}{3}$. On the other hand, if one applies the sharp operator to theses Riemann sums, one obtains in turn the Riemann sum of the sharp operator applied to the expressions $(a)-(b)-(c)$ which are also both continuous in $x$ and bounded in $L^\beta$ for every $\beta<\frac{d+1}{3}$. Hence the same reasoning holds. The completeness of the domains $\mathbb{D}^{1,p}$ endowed with their natural norm $\|\cdot\|_{1,p}$ gives the desired claim.
\end{proof}
\end{lemma}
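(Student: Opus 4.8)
The plan is to write the nodal volume as a limit of Riemann sums inside the Banach space $\mathbb{D}^{1,\beta}$ and to exploit its completeness. Starting from Proposition \ref{prop.IPP2}, denote by $g_a,g_b,g_c$ the three integrands appearing in cases $(a)$, $(b)$, $(c)$ of Lemma \ref{Integrand is in the domain}, and set $G:=g_a+g_b+g_c$, so that $\mathcal H^{d-1}(\{f=0\})=\int_{\mathbb T^d}G(x)\,dx$. Fix $\beta$ with $1\le\beta<\frac{d+1}{3}$ and, for $N\ge 1$, let $S_N:=N^{-d}\sum_k G(x_k)$ be the Riemann sum of $G$ over the regular grid of $\mathbb T^d$ of mesh $1/N$, whose $N^d$ weights $N^{-d}$ sum to the unit volume of the torus. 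By Lemma \ref{Integrand is in the domain} each $G(x_k)\in\mathbb{D}^{1,\beta}$, and since $\mathbb{D}^{1,\beta}$ is a vector space and the sharp operator is linear, $S_N\in\mathbb{D}^{1,\beta}$ with $\s S_N=N^{-d}\sum_k\s G(x_k)$. It thus suffices to show that $S_N\to\mathcal H^{d-1}(\{f=0\})$ in $L^\beta(\mathbb{P})$ and that $(\s S_N)$ converges in $L^\beta(\mathbb{P}\otimes\hat{\mathbb{P}})$; completeness of $\mathbb{D}^{1,\beta}$ for $\|\cdot\|_{1,\beta}$ then yields both $\mathcal H^{d-1}(\{f=0\})\in\mathbb{D}^{1,\beta}$ and the formula \eqref{sharpduvolume}.

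First I would treat the convergence of $(S_N)$. The non-degeneracy assumption guarantees $\min_{\mathbb T^d}\eta_f>0$ almost surely, so for almost every realization $G$ is a bounded, continuous function of $x$ (the factor $|f|$ is continuous, and every remaining factor is a smooth function of $(f,\nabla f,\mathrm{Hess}\,f)$ on $\{\eta_f>0\}$). Hence $S_N\to\int_{\mathbb T^d}G\,dx$ almost surely. To upgrade this to $L^\beta$, fix $\gamma$ with $\beta<\gamma<\frac{d+1}{3}$; by stationarity the norm $\|G(x)\|_{L^\gamma(\mathbb{P})}$ does not depend on $x$ and is finite by the moment estimate in the proof of Lemma \ref{Integrand is in the domain}, so Minkowski's inequality yields $\sup_N\|S_N\|_{L^\gamma(\mathbb{P})}<\infty$. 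The family $(|S_N|^\beta)$ is therefore uniformly integrable and the Vitali convergence theorem provides convergence in $L^\beta(\mathbb{P})$.

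The convergence of $(\s S_N)$ is the delicate point. The chain rules of Propositions \ref{Chain-Rule-Proposition} and \ref{Chain-rule-Lipschitz}, together with Lemma \ref{CR-technical}, express $\s G(x)$ as an explicit combination of $f,\nabla f,\mathrm{Hess}\,f$, of negative powers of $\eta_f$, of the factor $\mathrm{sign}(f)$ coming from $\s|f|=\mathrm{sign}(f)\,\s f$, and of the sharp images $\s f,\s\partial_i f,\s\partial^2_{ij}f$. Here the choice of Bouleau's gradient is decisive: by the very construction of the probability space, these images identify with $\hat f,\partial_i\hat f,\partial^2_{ij}\hat f$, where $\hat f$ is an independent copy of $f$ sharing its $C^2$ regularity, the sharp operator commuting with differentiation in the space variable. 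Consequently, for almost every pair $(\omega,\hat\omega)$ the map $x\mapsto\s G(x)$ is bounded and continuous off the nodal set $\{f=0\}$; being an almost surely smooth hypersurface, the latter is Lebesgue negligible, so $\s G(\cdot)$ is Riemann integrable and $\s S_N\to\int_{\mathbb T^d}\s G\,dx$ almost surely. As before, joint stationarity of $(f,\hat f)$ makes $\|\s G(x)\|_{L^\gamma(\mathbb{P}\otimes\hat{\mathbb{P}})}$ independent of $x$ and finite for $\gamma<\frac{d+1}{3}$ by Lemma \ref{Integrand is in the domain}, and Minkowski plus Vitali give convergence of $(\s S_N)$ in $L^\beta(\mathbb{P}\otimes\hat{\mathbb{P}})$; since conditionally on $\mathcal F$ all these variables are Gaussian in the $\hat X_i$, this is equivalent to convergence for the norm $L^\beta(\Omega,L^2(\hat{\mathbb{P}}))$ entering $\|\cdot\|_{1,\beta}$.

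Combining the two convergences, $(S_N)$ is a Cauchy sequence for $\|\cdot\|_{1,\beta}$ converging to the nodal volume in $L^\beta(\mathbb{P})$, so completeness of $\mathbb{D}^{1,\beta}$ yields $\mathcal H^{d-1}(\{f=0\})\in\mathbb{D}^{1,\beta}$ with sharp operator equal to $\int_{\mathbb T^d}(\s g_a+\s g_b+\s g_c)\,dx$, which is exactly \eqref{sharpduvolume}. Letting $\beta\uparrow\frac{d+1}{3}$ gives membership in $\mathbb{D}^{1,\frac{d+1}{3}-}$. I expect the main obstacle to be precisely the commutation of the sharp operator with the spatial integral in the third paragraph: one must ensure that the Riemann sums of $\s G$ converge pathwise despite the jump carried by $\mathrm{sign}(f)$ across the nodal set, and this pathwise control is available only because the chosen gradient preserves the continuity of the Gaussian field in $x$, in contrast with the standard Malliavin derivative valued in an abstract $L^2([0,T])$.
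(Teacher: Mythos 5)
Your proof follows essentially the same route as the paper: approximate the integral by Riemann sums, use linearity of the sharp operator on the finite sums, establish almost sure and $L^\beta$ convergence of both the sums and their sharps, and conclude by completeness of $\mathbb{D}^{1,\beta}$. You are in fact slightly more careful than the published argument on two points the paper glosses over — the Vitali/uniform-integrability step needed to pass from almost sure convergence plus $L^\gamma$-boundedness to $L^\beta$ convergence, and the observation that the $\mathrm{sign}(f)$ discontinuity of $\s G$ is confined to the Lebesgue-null nodal set so Riemann integrability survives.
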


\subsection{On the non-degeneracy of the Malliavin derivative}

Let us begin with the following well-known result of Malliavin calculus. Again, in order to remain self-contained we sketch a proof.

\begin{lemma}
Let $X\in\mathbb{D}^{1,p}$ for $p>1$ and we assume that $\mathbb{P}\otimes\hat{\mathbb{P}}\text{-a.s.}$ we have $\s X=0.$ Then, the random variable $X$ is constant.
\end{lemma}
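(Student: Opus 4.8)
The plan is to show that $\mathbb{E}[X\mid\mathcal F_m]$ is $\mathbb P$-almost surely constant for every $m\ge 1$, where $\mathcal F_m:=\sigma(X_1,\dots,X_m)$, and then to let $m\to\infty$ by Lévy's upward martingale convergence theorem. Since $\mathcal F=\sigma(X_i;i\ge 1)=\bigvee_m\mathcal F_m$ and $X\in L^p(\mathbb P)\subset L^1(\mathbb P)$, one has $\mathbb{E}[X\mid\mathcal F_m]\to X$ almost surely and in $L^1(\mathbb P)$; hence if each of these conditional expectations equals the constant $\mathbb{E}[X]$, the limit $X$ must itself equal $\mathbb{E}[X]$, i.e.\ be constant. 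The whole issue is therefore to exploit the hypothesis $\s X=0$, which I would do through the auxiliary random variables $D_iX:=\hat{\mathbb E}\big(\s X\,\hat X_i\big)\in L^p(\mathbb P)$, all of which vanish as soon as $\s X=0$.

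First I would establish, for every $m\ge 1$, every Schwartz function $\theta\in\mathcal S(\mathbb R^m,\mathbb R)$ and every $i\le m$ (writing $X_{[m]}:=(X_1,\dots,X_m)$), the integration by parts identity
\begin{equation*}
\mathbb{E}\big[D_iX\,\theta(X_{[m]})\big]=\mathbb{E}\big[X\,(X_i\theta-\partial_i\theta)(X_{[m]})\big].
\end{equation*}
To this end I would take an approximating sequence $(F_n)\subset\mathcal P$ with $F_n\to X$ for $\|\cdot\|_{1,p}$, note that $\hat{\mathbb E}\big(\s F_n\,\hat X_i\big)=\partial_i F_n$, and apply the elementary finite-dimensional Gaussian integration by parts $\mathbb{E}[\partial_i(F_n\theta)]=\mathbb{E}[X_i F_n\theta]$, which is legitimate because $F_n$ has polynomial growth while $\theta$ is Schwartz, so all boundary terms at infinity vanish. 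Passing to the limit is the routine but central step: $F_n\to X$ in $L^p(\mathbb P)$ while $X_i\theta-\partial_i\theta$ is bounded, and by the Cauchy--Schwarz inequality in $\hat{\mathbb P}$ together with \eqref{defi-gradient} one gets $\partial_i F_n=\hat{\mathbb E}\big(\s F_n\,\hat X_i\big)\to\hat{\mathbb E}\big(\s X\,\hat X_i\big)=D_iX$ in $L^p(\mathbb P)$. Under the hypothesis $D_iX=0$ the identity then collapses to $\mathbb{E}\big[X\,\partial_i\theta(X_{[m]})\big]=\mathbb{E}\big[X\,X_i\theta(X_{[m]})\big]$ for all such $\theta$ and all $i\le m$.

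It then remains to translate this into a statement about $g_m:=\mathbb{E}[X\mid\mathcal F_m]$, viewed as a function $g_m\in L^p(\gamma_m)$, where $\gamma_m$ denotes the standard Gaussian measure on $\mathbb R^m$ with density $\rho$. Replacing $X$ by $g_m$ under both expectations is legitimate since $\partial_i\theta(X_{[m]})$ and $X_i\theta(X_{[m]})$ are $\mathcal F_m$-measurable, and using the relation $x_i\rho=-\partial_i\rho$ a distributional integration by parts shows that $(\partial_i g_m)\rho=0$ in $\mathcal D'(\mathbb R^m)$, hence $\partial_i g_m=0$ because $\rho>0$ everywhere. As this holds for every $i\le m$ and $\mathbb R^m$ is connected, $g_m$ is constant, necessarily equal to $\mathbb{E}[X]$, and feeding this into the martingale convergence described above yields the claim. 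I expect the only genuine difficulty to be the justification of the limit in the integration by parts identity, namely the control of integrability when passing from $F_n$ to $X$; should the Schwartz class turn out to be too restrictive, one can instead insert the truncations $\chi_M$ exactly as in the proof of closability of $\s$ above and let $M\to\infty$ at the very end.
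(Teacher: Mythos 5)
Your proof is correct, but it follows a genuinely different route from the paper's. The paper disposes of the lemma in three lines: it truncates $X$ by a smooth bounded $\chi_M$, notes that the chain rule gives $\s \chi_M(X)=\chi_M'(X)\,\s X=0$, and invokes the Gaussian Poincar\'e inequality $\mathrm{Var}(\chi_M(X))\le \E\left(\chi_M'(X)^2\,\hat{\E}\left((\s X)^2\right)\right)=0$ before letting $M\to\infty$. You instead rebuild the statement from first principles: you extract the directional derivatives $D_iX=\hat{\E}(\s X\,\hat X_i)$, prove the Gaussian integration-by-parts identity $\E[D_iX\,\theta]=\E[X(X_i\theta-\partial_i\theta)]$ by passing to the limit along an approximating sequence in $\mathcal P$ (the same limiting mechanism as in the paper's closability argument), project onto $\mathcal F_m$ to deduce that $g_m=\E[X\mid\mathcal F_m]$ has vanishing distributional gradient on $\R^m$ (the substitution $\theta=\psi/\rho$ for $\psi\in\mathcal C_c^\infty$ makes the step $(\partial_i g_m)\rho=0\Rightarrow\partial_i g_m=0$ cleanest), and conclude by L\'evy's upward martingale convergence. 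What the paper's argument buys is brevity, at the cost of importing the Poincar\'e inequality for the Malliavin gradient as a black box (legitimate here since the truncation places $\chi_M(X)$ in $\mathbb{D}^{1,2}$); what yours buys is complete self-containedness relative to Section 4.1 and an explicit identification of the finite-dimensional projections of $X$, at the cost of length. Both arguments are sound; the only point in yours deserving care is the one you already flag, namely the $L^p$ convergence $\partial_iF_n=\hat{\E}(\s F_n\,\hat X_i)\to D_iX$, which indeed follows from Cauchy--Schwarz in $\hat{\mathbb P}$ and the defining convergence \eqref{defi-gradient}.
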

\begin{proof} Set $M>0$ and $\chi_M$ a $\mathcal{C}^\infty$ function satisfying $\chi_M(x)=x$ on $[-M,M]$ and $\chi_{M}(x)=M+1$ on $[M+1,\infty[$ and $-M-1$ on $]-\infty,-M-1]$. One has $\chi_M(X)\in\mathbb{D}^{1,2}$ and the Poincaré inequality coupled with the usual chain rule ensures that
\begin{equation*}
\text{Var}(\chi_M(X))\le \E\left(\chi_M'(X)^2\hat{\E}\left(\s X^2\right)\right)=0.
\end{equation*}
As a result, $\chi_M(X)$ is a constant random variable which tends almost surely to $X$. Hence, $X$ is constant as it was expected.
\end{proof}

Besides, based on Lemma \ref{Integral is in the domain}, one knows that the nodal volume belongs to some Malliavin spaces $\mathbb{D}^{1,p}$. Thus, if one can proves that the nodal volume is not a constant random variable, we can derive that its Malliavin derivative is not almost surely zero. Relying on Theorem \ref{EID}, it implies  that the distribution of $X:=\mathcal{H}(f=0)$ is not singular with respect to the Lebesgue measure. Indeed, assume that the distribution of $X$ is singular, one can find a Borel set $A$, negligible  with respect to the Lebesgue measure, such that $\mathbb{P}\left(X\in A\right)=1$. The statement (\ref{EIDeq}) ensures that
$$\E\left( \textbf{1}_A(X) \hat{\E}\left(\s X^2\right)\right)=0,$$
which in turn gives that
$$\E\hat{\E}\left(\s X^2\right)=0.$$
Hence the Malliavin derivative would be almost surely zero and the nodal volume constant.

\begin{rem}\label{Notconstantrem}
It is in general easy to prove that the nodal volume is not a constant random variable. Let us illustrate this in fact in the framework of random trigonometric sums. Suppose that $\Lambda$ is a finite (non-empty) set in $\mathbb Z^d$ and consider the polynomial
\[
f(x)=\sum_{(k_1,\cdots,k_d)\in \Lambda} c_{k_1,\cdots,k_d} G_{k_1,\cdots,k_d} \cos(k_1 x_1)\cdots \cos(k_d x_d),
\] 
where $c_{k_1,\cdots,k_d}$ are non-zero deterministic coefficients and $\left(G_{k_1,\cdots,k_d}\right)_{k_1,\cdots,k_d}$ is an i.i.d. sequence of Gaussian random variables. For some fixed multi-index $(l_1,\cdots,l_d) \in \Lambda$ and some $\epsilon>0$ the following event has positive probability
\[
A_{l_1,\cdots,l_d}^\epsilon:=\left\{ \left\|\sum_{(k_1,\cdots,k_d)\neq (l_1,\cdots,l_d)} c_{k_1,\cdots,k_d} G_{k_1,\cdots,k_d} \cos(k_1 x_1)\cdots \cos(k_d x_d)\right\|_{\mathcal{C}^1}<\epsilon\right\},
\]
since it may happen that all the Gaussian coefficients of the previous sum are small. On the other hand, if the nodal volume of $f$ is constant, than it must equal its conditional expectation with respect to $A_{l_1,\cdots,l_d}^\epsilon$. Let us recall that the nodal volume is continuous with respect to the $\mathcal{C}^1$ topology which has been proved in \cite{angst2018universality}. It follows that 
$$\mathcal{H}^{d-1}(f=0)=\E\left[ \mathcal{H}^{d-1}(f=0)~~ \left|\right. ~~A_\epsilon\right]\xrightarrow[\epsilon\to 0]~\mathcal{H}^{d-1}\left(\cos(l_1 x_1)\cos(l_2 x_2)\cdots \cos(l_d x_d)=0\right).$$
Choosing another multi-index $(l_1,\cdots,l_d) \in \Lambda$ with a different deterministic nodal volume brings a contradiction.
\end{rem}

\subsection{Understanding the singularity}

In this subsection, we will seek for conditions implying that $\mathcal H^{d-1}\left(\{f=0\}\right)$ has a density. One very likely condition is that the nodal volume is absolutely continuous conditionally to the fact that $f$ vanishes. Indeed, as discussed in Remark \ref{diracenzero}, we recall that the nodal volume distribution has an atom on zero if the underlying Gaussian field can be strictly positive on $\mathbb{T}^d$. Our strategy consists in proving that $f$ has a constant sign if the Malliavin derivative is zero. The computations reveal a second order differential operator whose kernel contains both $1$ and $\text{sign}(f)$. Therefore, if this kernel is of dimension one, one deduces the desired non-degeneracy.

We give us $1\le p<\frac{d+1}{3}$ which exists as soon as $d\ge 3$. The forthcoming computations will hold in the domain $\mathbb{D}^{1,p}$. Relying on the property of the energy image density (\ref{EIDeq}), one is left to show that

\begin{equation}{\label{Nondege}}
\mathbb{P}\text{-a.s.},\,\,\hat{\E}\left(\left[\s\mathcal H^{d-1}\left(\{f=0\}\right)\right]^2\right)\neq 0.
\end{equation}

To do so, we fix some $\omega\in\Omega$ such that

$$\hat{\E}\left(\left[\s\mathcal H^{d-1}\left(\{f=0\}\right)\right]^2\right)=0.$$

\noindent Combining the equation (\ref{Technical1}) with Lemma \ref{Integral is in the domain}, one may compute in an explicity way the Malliavin derivative of the nodal volume which is given by $\s\mathcal H^{d-1}\left(\{f=0\}\right)$.  Indeed, using the chain rule property of the sharp operator, one may find some explicit processes $\Psi_0$, $(\Psi_i)_{1\le i \le d},(\Psi_{i,j})_{1\le i,j\le d}$, which are all of the form

\begin{equation}\label{form-auxiliary-functions}
\Psi_0:=\text{sign}(f)\,\frac{P_0}{\eta_f^{7}},\Psi_i :=\text{sign}(f)\,\frac{P_i}{\eta_f^7},\,\Psi_{i,j}:=\text{sign}(f)\,\frac{P_{i,j}}{\eta_f^7}
\end{equation}
with $(P_0,(P_i)_{1\le i \le d},(P_{i,j})_{1\le i,j\le d})$ some polynomial functionals of $(f,\nabla f,\nabla^2 f)$ which satisfy

\begin{equation}\label{Nondege1}
\hat{\mathbb{P}}\text{-a.s.}, \int_{\mathbb{T}^d} \left(\Psi_0(x) \hat{f}(x)+\sum_{i=1}^d \Psi_i (x) \partial_i\hat{f}(x)+\sum_{i,j=1}^d \Psi_{i,j}(x)\partial_{i,j}\hat{f}(x)\right) dx=0.
\end{equation}

\subsubsection{Step1: Interpreting the conditions via the spectral measure}
We denote by $\mu$ the spectral measure associated with the Gaussian process $f$ and which is a probability measure on $\R^d$ supported on $\mathbb{Z}^d$ because our process is assumed to be periodic. We recall that the correlation function $\rho$ is defined via its spectral measure by 
\begin{equation}\label{spec-corr}
\E\left(f(x) f(y)\right)=\rho(y-x)=\sum_{\Vec{k}\in\mathbb{Z}^d} e^{\textbf{i} \Vec{k} \cdot (y-x)} \mu_{\Vec{k}}.
\end{equation}

By computing the variance of the equation (\ref{Nondege1}) with respect to $\hat{\mathbb{P}}$ and substituting the correlations functions by their spectral representation (\ref{spec-corr}), one can rewrite the equation (\ref{Nondege1}) in the following way

\begin{equation}\label{Nondege2}
\hat{\mathbb{P}}\text{-a.s.},\,\sum_{\Vec{k}\in\mathbb{Z}^d} \mu_{\Vec{k}} \left(\int_{\mathbb{T}^d}\left(\Psi_0(x)+\sum_{i=1}^d \textbf{i}\Psi_i (x) k_i-\sum_{i,j=1}^d k_i k_j \Psi_{i,j}(x)\right)e^{\textbf{i} \Vec{k}\cdot x} dx\right)^2=0.
\end{equation}

In order to understand the information contained in the equation (\ref{Nondege2}), we make the stronger assumption that the support of $\mu$ is the whole lattice $\mathbb{Z}^d$. In particular, we get that for every $\hat{\mathbb{P}}\text{-a.s.},$ and every $\Vec{k}\in\mathbb{Z}^d$:
\begin{equation}\label{Nondege3}
\int_{\mathbb{T}^d}\left(\Psi_0(x)+\sum_{i=1}^d \textbf{i}\Psi_i (x) k_i-\sum_{i,j=1}^d k_i k_j \Psi_{i,j}(x)\right)e^{\textbf{i} \Vec{k}\cdot x} dx=0.
\end{equation}
Now, taking $\theta \in \mathcal{C}^\infty_c(\mathbb{T}^d)$, one deduces from (\ref{Nondege3}) that
\begin{equation}
\int_{\mathbb{T}^d} \left(\Psi_0(x) \theta(x)+\sum_{i=1}\Psi_i(x) \partial_i \theta(x)+\sum_{i,j} \Psi_{i,j}(x) \partial_{i,j} \theta(x)\right) dx=0.
\end{equation}
\subsubsection{Step2: Reasoning on the set of positive values of $f$.} We take a test function in $\mathcal{C}_c^\infty(\mathbb{T}^d)$ but which is supported in the open set $\{f>0\}$. Recalling the specific form of $\Psi_0, (\Psi_i)_{i\ge 1}, (\Psi_{i,j})_{i,j\ge 1}$ which is given by the equation (\ref{form-auxiliary-functions}), one obtains for all $i\in\{1,\cdots,d\}$ and all $(k,l)\in\{1,\cdots,d\}^2$ that

$$\Psi_0\theta=\frac{P_0}{\eta_f^7}, \,\,\,\Psi_i \partial_i\theta=\frac{P_i}{\eta_f^7} \partial_i \theta,\,\,\,\Psi_{k,l} \partial_{k,l}\theta=\frac{P_{k,l}}{\eta_f^7} \partial_{k,l} \theta.$$

As a result, from the equation (\ref{Nondege3}), one derives that
\begin{equation}\label{Nondege4}
\int_{\mathbb{T}^d} \left(\frac{P_0(x)}{\eta_f^7(x)} \theta(x)+\sum_{i=1}\frac{P_i(x)}{\eta_f^7(x)} \partial_i \theta(x)+\sum_{i,j} \frac{P_{k,l}(x)}{\eta_f^7(x)} \partial_{i,j} \theta(x)\right) dx=0,
\end{equation}
as soon as $\theta$ is supported on the open set $\{f>0\}$. In order to fix the ideas, we make the stronger assumption that our process is of class $\mathcal{C}^\infty$ which implies in turn that the following functions $(\frac{P_0(x)}{\eta_f^7(x)}, (\frac{P_i(x)}{\eta_f^7(x)})_{i\ge 1}, (\frac{P_{k,l}(x)}{\eta_f^7(x)})_{i,j\ge 1})$ are of class $\mathcal{C}^\infty$. Performing two integrations by parts, equation (\ref{Nondege4}) leads to
\begin{equation}\label{Nondege5}
\int_{\mathbb{T}^d} \left(\frac{P_0(x)}{\eta_f^7(x)}+\sum_{i=1}\partial_i \left[\frac{P_i(x)}{\eta_f^7(x)}\right]+\sum_{i,j=1}^d \partial_{i,j} \left[\frac{P_{k,l}(x)}{\eta_f^7(x)}\right]\right) \theta(x)dx=0.
\end{equation}
Due to the polynomial nature of $(P_0,(P_i),(P_{i,j})$ the expression between parenthesis in the previous equation can be written in the form $$
\frac{R_1\Big(f,(\partial_i f)_{i},(\partial_{i,j} f)_{i,j},(\partial_{i,j,k} f)_{i,j,k},(\partial_{i,j,k,l} f)_{i,j,k,l}\Big)}{\eta^{11}},
$$
where $R_1$ is some explicit multivariate polynomial function. For the sake of simplicity we shall simply write:
$$\mathcal{R}_1:=R_1\Big(f,(\partial_i f)_{i},(\partial_{i,j} f)_{i,j},(\partial_{i,j,k} f)_{i,j,k},(\partial_{i,j,k,l} f)_{i,j,k,l}\Big).$$
The equation (\ref{Nondege5}) being valid for any test function supported on $\{f>0\}$ one deduces that $\mathcal{R}_1$ is zero on $\{f>0\}$. The same reasoning holds for the set $\{f<0\}$ and leads also to $\mathcal{R}_1=0$. Moreover, the set $\{f=0\}$ is of empty interior and each root of $f$ is in the adherence of $\{f>0\}\cup\{f<0\}$. Finally, the continuity of $\mathcal{R}_1$ entails that $\mathcal{R}_1=0$ on $\mathbb{T}^d$. Now coming back to the equation (\ref{Nondege5}) and performing again two integrations by parts one gets that for every $\theta \in \mathcal{C}^\infty(\mathbb{T}^d,\R)$, without any support restriction,
\begin{equation}\label{Nondege6}
\int_{\mathbb{T}^d} \left(\frac{P_0(x)}{\eta_f^7(x)} \theta(x)+\sum_{i=1}\frac{P_i(x)}{\eta_f^7(x)} \partial_i \theta(x)+\sum_{i,j} \frac{P_{k,l}(x)}{\eta_f^7(x)} \partial_{i,j} \theta(x)\right) dx=0,
\end{equation}
\subsubsection{Step3: A partial differential equation.}

Introduce now the partial differential operator:

$$
\mathcal{L}:~~\left\{
\begin{array}{rl}
\mathcal{C}^\infty(\mathbb{T}^d,\R)&\longrightarrow\mathcal{C}^\infty(\mathbb{T}^d,\R)\\
\theta&\longrightarrow \displaystyle{ \Psi_0(x) \theta(x)+\sum_{i=1}^d\Psi_i(x) \partial_i \theta(x)+\sum_{i,j=1}^d \Psi_{i,j}(x) \partial_{i,j} \theta(x)}.
\end{array}
\right.
$$
Denoting by $\mathcal{L}^*$ the adjoint of the operator $\mathcal{L}$, the combination of equations (\ref{Nondege3}) and (\ref{Nondege6}) leads to

$$\{1,\text{sign}(f)\}\subset \text{Ker} \left(\mathcal{L}^*\right).$$

As a result, one has the next corollary.

\begin{coro}\label{cor.nondeg}
If $\text{dim}\left(\text{Ker}\left(\mathcal{L}^*\right)\right)=1$ and the Malliavin derivative of the nodal volume vanishes, namely \[
\hat{\E}\left(\left[\s\mathcal H^{d-1}\left(\{f=0\}\right)\right]^2\right)=0,
\]
then $f$ has constant sign. 
\end{coro}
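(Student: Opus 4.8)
The plan is to read off the conclusion directly from the inclusion $\{1,\text{sign}(f)\}\subset\text{Ker}(\mathcal{L}^*)$ already obtained at the end of Step 3, combining it with the one-dimensionality hypothesis. First I would recall that the standing assumption $\hat{\E}\left(\left[\s\mathcal H^{d-1}\left(\{f=0\}\right)\right]^2\right)=0$ is precisely what drives the whole computation of Steps~1 to~3: it produces equation (\ref{Nondege1}), which after passing through the spectral description and the two successive integrations by parts yields both (\ref{Nondege3}) and (\ref{Nondege6}), hence the membership of the constant function $1$ and of $\text{sign}(f)$ in $\text{Ker}(\mathcal{L}^*)$. So the nontrivial analytic content has already been spent, and the corollary is a short deduction on top of it.

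The heart of the argument is then a one-line piece of linear algebra. Since $\text{Ker}(\mathcal{L}^*)$ is assumed to be one-dimensional and contains the nonzero constant function $1$, it must coincide with $\text{span}\{1\}$. As $\text{sign}(f)$ also belongs to $\text{Ker}(\mathcal{L}^*)$, there is a scalar $\lambda$ with $\text{sign}(f)=\lambda$ almost everywhere on $\mathbb{T}^d$; in other words, the sign of $f$ is almost everywhere constant.

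It remains to convert this into the genuine statement that $f$ keeps a constant sign, and this is where I would invoke non-degeneracy. Since the Gaussian vector $(f,\nabla f)$ has a density, the nodal set $\{f=0\}$ is a.s. a $(d-1)$-dimensional submanifold, hence of zero Lebesgue measure, so $\text{sign}(f)\in\{-1,+1\}$ almost everywhere. An a.e. constant function valued in $\{-1,+1\}$ forces $\lambda\in\{-1,+1\}$, so $f$ does not change sign on a set of full measure; by continuity of $f$ this gives $f\geq 0$ everywhere or $f\leq 0$ everywhere. Finally, a zero of such an $f$ would be a global extremum, hence a critical point, which is excluded by non-degeneracy, so $f$ has in fact strictly constant sign.

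I expect no real obstacle in the corollary itself, which is essentially immediate once Step~3 is in place; the only point demanding a little care is this last passage from \emph{almost everywhere constant sign} to \emph{constant sign}, which is exactly where the non-degeneracy assumption enters. Equivalently, one may phrase the whole thing by contraposition: if $f$ changed sign, then $\text{sign}(f)$ would take both values $\pm 1$ on sets of positive measure and would thus be linearly independent from $1$, giving $\dim\text{Ker}(\mathcal{L}^*)\geq 2$ and contradicting the hypothesis.
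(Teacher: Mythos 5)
Your proposal is correct and follows exactly the route the paper intends: the corollary is read off from the inclusion $\{1,\mathrm{sign}(f)\}\subset\mathrm{Ker}(\mathcal{L}^*)$ established in Steps 1--3, with the one-dimensionality hypothesis forcing $\mathrm{sign}(f)$ to be a constant multiple of $1$. Your added care in passing from ``almost everywhere constant sign'' to ``constant sign'' via the zero measure of the nodal set and non-degeneracy is a welcome precision that the paper leaves implicit, but it does not change the argument.
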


Due to the involved form of the coefficients of $\mathcal{L}$, it seems unclear for the authors whether there is an ellipticity property on the operator $\mathcal{L}$. As a result, we were not able to prove that $\text{dim}\left(\text{Ker}\left(\mathcal{L}^*\right)\right)=1$. Such a conclusion would have implied that $f$ has a constant sign on $\mathbb{T}^d$ if and only if the Malliavin derivative of the nodal volume is zero which is a very natural statement.

\subsection{Extension to the non-periodic setting}

In this section we briefly explain how to extend the content of Theorem \ref{Main theorem-periodic} to a non periodic setting and to obtain the announced Theorem \ref{Mainpasperio}. We give us a $\mathcal{C}^2$ and non periodic stationary Gaussian field on some domain $P:=[a_1,b_1]\times\cdots[a_d,b_d]\subset\R^d$. One is only left to show that the nodal volume $\mathcal{H}^{d-1}\left(f=0\right)$ is also in the domain of the Malliavin operator $\s$. The main difference is that the lack of periodicity creates many boundary terms when performing integrations by parts and one has to check that theses boundary terms belong also to the domain of the Malliavin derivative.

\subsubsection{A closed formula for the nodal volume involving boundary terms.} Our starting point is the proof of Proposition \ref{formule-IPP1}, from which we keep the notations. Let us fix some $i\in\{1,\cdots,d\}$ and denote by $P^{(i)}$ the Cartesian product of intervals $[a_j,b_j]$ where the coordinate $i$ has been removed and  by $dx^{(i)}$ the associated Lebesgue measure.

\begin{eqnarray*}
V_i^{\epsilon}&=&-\frac{1}{2} \int_{P} \left( \phi_{\epsilon} \circ f(x) \right) \left(  \frac{f^2\partial_{ii}^2 f - f(\partial_i f)^2+\partial_{ii}^2 f ||\nabla f||^2-\partial_i f \sum_j \partial_j f \partial_{ij}^2f}{\eta_f(x)^3}\right)  dx\\
&+& \underbrace{\frac{1}{2}\left[\int_{P^{(i)}} \phi_{\epsilon} \circ f(x)\left(  \frac{\partial_i f(x)}{\sqrt{|f(x)|^2 + ||\nabla f||^2}}\right)dx^{(i)]}\right]_{a_i}^{b_i}.}_{W^\epsilon_i}
\end{eqnarray*}

It still possible to let $\epsilon$ tend to zero and use the dominated convergence theorem. One recovers, after summation on the indexes $i\in\{1,\cdots,d\}$ that

\begin{eqnarray*}
V&=&-\frac{1}{2}\int_{P} |f(x)|\frac{\left(f(x)\Delta f(x)-\|\nabla f(x)\|^2\right)}{\eta_f(x)^3}dx\\
&-&\frac{1}{2} \int_{P} \text{sign}(f(x))\left(\Delta f(x) ||\nabla f(x)||^2 - \nabla f(x)^* \text{Hess}_x(f) \nabla f(x)\right)\frac{dx}{\eta_f(x)^3}\\
&+& \frac{1}{2}\sum_{i=1}^d \underbrace{\left[\int_{P^{(i)}}\text{sign}(f(x))\frac{\partial_i f(x)}{\eta_{f}^2(x)}dx^{(i)}\right]_{a_i}^{b_i}}_{W_i}.
\end{eqnarray*}

As in the proof of formula of Proposition \ref{prop.IPP2}, one has to remove the singular terms $\text{sign}(f)$. To do so, a supplementary integration by parts is required. The first singular term

$$\frac{1}{2} \int_{P} \text{sign}(f(x))\left(\Delta f(x) ||\nabla f(x)||^2 - \nabla f(x)^* \text{Hess}_x(f) \nabla f(x)\right)\frac{dx}{\eta_f(x)^3},$$

can be handled as in the proof of Proposition \ref{prop.IPP2}. Nevertheless, the boundary terms do not vanish anymore and one get the supplementary terms

\begin{eqnarray*}
\left[\int_{P^{(i)}} |f(x)| \frac{\Delta f(x) \partial_i f(x)-\sum_{j=1}^d \partial_j f(x) \partial_{i,j}^2 f(x)}{\eta_f (x)^3}dx^{(i)}\right]_{a_i}^{b_i}.
\end{eqnarray*}

In order to handle the other singular terms $W_i$ we need some preliminary observations. Recall that our Gaussian field is such that $(f(x),\nabla f(x))$ has a density. Fix some $i\in\{1,\cdots,d\}$, $a \in [a_i,b_i]$ and consider
\[
g_i(x):=g_i(x_1,x_2,\cdots,x_{i-1},x_{i+1},\cdots,x_d)=f(x_1,\cdots,x_{i-1},\textbf{a},x_{i+1},\cdots,x_d),
\]
the Gaussian field on $\R^{d-1}$ obtained by setting the $i$-th coordinate equal to $\textbf{a}$. Then, we infer that $(g_i(x),\nabla g_i(x))$ has a density on $\R^{d-1}$. Indeed, $g_i$ is self-evidently a stationary Gaussian process and its covariance matrix $\Sigma_{g_i}$ is obtained by removing the row and line $i$ from the covariance matrix $\Sigma_f$ of $(f,\nabla f)$. The Sylvester criterion ensures us that $\Sigma_g$ is invertible. As a consequence, $\eta_{g_i}>0$ for every $i\in\{1,\cdots,d\}$. Denote by $\delta_a$ the Dirac measure on $a$, for writing convenience we shall write the boundary terms occurring in the integrations by part as integrals with respect to tensor products of Lebesgue and Dirac measures. We have,
\begin{eqnarray*}
&&\sum_{i=1}^d \int_{\R}\left(\int_{P^{(i)}}\text{sign}(f(x))\frac{\partial_i f(x)}{\eta_f(x)}dx^{(i)}\right)\delta_a(x_i)\\
&&=\sum_{i=1}^d\int_{\R}\left(\int_{P^{(i)}}\text{sign}(f(x))\frac{\partial_i f(x)\eta_{g_i}^2(x)}{ \eta_{g_i}^2(x) \eta_f(x)}dx^{(i)}\right)\delta_a(x_i)\\
\end{eqnarray*}

Since we integrate the $i$-th coordinate with respect to $\delta_a$, we deduce that 

\begin{eqnarray*}
&&\int_{\R}\int_{P^{(i)}}\text{sign}(f(x))\frac{\partial_i f(x)\eta_{g_i}^2(x)}{\eta_{g_i}^2(x)\eta_{f}^2(x)}dx^{(i)}\delta_a(x_i)\\
&&=\int_{\R}\int_{P^{(i)}}|g_i(x)| \frac{\partial_i f(x) g_i(x)}{\eta_{g_i}^2(x)\eta_{f}^2(x)}dx^{(i)}\delta_a(x_i)\\
&&+\sum_{j=1,j \neq i}^d \underbrace{\int_{\R}\int_{P^{(i)}} \frac{\partial_j |g_i(x)| \partial_j g_i(x)\partial_i f(x) }{\eta_{g_i}^2(x)\eta_{f}^2(x)}dx^{(i)}\delta_a(x_i)}_{Z_{i,j}}.\\
\end{eqnarray*}

Integrating by parts leads to

\begin{eqnarray*}
&&Z_{i,j}=\int_{\R}\int_{P^{(i)}} \frac{\partial_j |g_i(x)| \partial_j g_i(x)\partial_i f(x) }{\eta_{g_i}^2(x)\eta_{f}^2(x)}dx^{(i)}\delta_a(x_i)\\
&&=\int_{\mathbb{R}^2}\int_{P^{(i,j)}}\frac{|g_i(x)| \partial_j g_i(x)\partial_i f(x) }{\eta_{g_i}^2(x)\eta_{f}^2(x)}dx^{(i,j)}\delta_a(x_i)(\delta_{b_j}(x_j)-\delta_{a_j}(x_j))\\
&&-\int_{\R}\int_{P^{(i)}} |g_i(x)|\partial_j\left[\frac{ \partial_j g_i(x)\partial_i f(x) }{\eta_{g_i}^2(x)\eta_{f}^2(x)}\right]dx^{(i)}\delta_a(x_i)
\end{eqnarray*}

\paragraph{Checking the integrability conditions for the sharp operator.} Clearly the terms in common with the formula corresponding to the periodic case produce the same integrability restrictions. One is only left to study each border term occurring the previous computations. 
Firstly, the terms of the form
\begin{eqnarray*}
\left[\int_{P^{(i)}} |f(x)| \frac{\Delta f(x) \partial_i f(x)-\sum_{j=1}^d \partial_j f(x) \partial_{i,j}^2 f(x)}{\eta_f (x)^3}dx^{(i)}\right]_{a_i}^{b_i}
\end{eqnarray*}
requires that $\eta_f^{-1}\in\mathbb{D}^{1,p}$ which holds true whenever $p<\frac{d+1}{3}$, see the proof of Lemma \ref{Integrand is in the domain}. One has to focus on the quantities $Z_{i,j}$. First the terms of the form
$$|g_i(x)| \frac{\partial_i f(x) g_i(x)}{\eta_{g_i}^2(x)\eta_{f}^2(x)}$$
are already bounded and taking the sharp operator requires a $p$-th moment for the quantity $\eta_{g_i}^{-1}$ which holds whenever $p<d$ and a fortiori $p<\frac{d+1}{3}$. As for the last integrand
$$|g_i(x)|\partial_j\left[\frac{ \partial_j g_i(x)\partial_i f(x) }{\eta_{g_i}^2(x)\eta_{f}^2(x)}\right],$$
it requires that $\eta_{g_i}^{-1}\in\mathbb{D}^{1,p}$ which in turn requires that $\eta_{g_i}^{-2}$ has a moment of order $p$. Following the same lines as in the proof of Lemma \ref{CR-technical} and noticing that $(g_i,\nabla g_i)$ is a $d$-dimensional Gaussian vector, one can take the polar coordinates to get the condition $d-1-2p>-1$ and so $p<\frac{d}{2}$. Finally, we notice that $\frac{d+1}{3}\le \frac{d}{2}$ as soon as $d\ge 3$ which explains why in both periodic and non-periodic frameworks, the integrability conditions are the same.
\bibliographystyle{alpha}
\newcommand{\etalchar}[1]{$^{#1}$}

\end{document}